\DeclarePairedDelimiterX\Set[1]{\lbrace}{\rbrace}%
 {  #1 }
\DeclarePairedDelimiterX\abs[1]{\lvert}{\rvert}{
\ifblank{#1}{\:\cdot\:}{#1}}
\DeclarePairedDelimiterX\norm[1]{\lVert}{\rVert}{
\ifblank{#1}{\:\cdot\:}{#1}}
\DeclarePairedDelimiterX\innerp[2]{\langle}{\rangle}{#1,#2}
\newcommand{\mynewtheorem}[2]{%
  \newaliascnt{#1}{thmdummy}
  \newtheorem{#1}[#1]{#2}
  \aliascntresetthe{#1}
  \expandafter\def\csname #1autorefname\endcsname{#2}
}
\theoremstyle{plain}
    \newtheorem*{theorem*}{Theorem}
    \newtheorem*{proposition*}{Proposition}
    \newtheorem*{lemma*}{Lemma}
\theoremstyle{definition}
    \newtheorem*{example*}{Example}
\theoremstyle{remark}
\newtheorem*{remark*}{Remark}
\newcommand{\A}{\alpha}
\newcommand{\B}{\beta}
\newcommand{\I}{\iota}
\newcommand{\D}{\Delta}
\newcommand{\N}{\Bbb N}
\newcommand{\Z}{\Bbb Z}
\newcommand{\C}{\Bbb C}
\newcommand{\R}{\Bbb R}
\renewcommand{\H}{\Bbb H}
\newcommand{\RP}{{\Bbb R\!\operatorname{P}}}
\newcommand{\CP}{{\Bbb C\!\operatorname{P}}}
\newcommand{\HP}{{\Bbb H\!\operatorname{P}}}
\newcommand{\T}{\otimes}
\newcommand{\e}{\varepsilon}
\newcommand{\p}{\varphi}
\newcommand{\W}{\bigwedge}
\newcommand{\rew}{\widetilde}
\newcommand{\w}{\wedge}
\newcommand{\Sq}{\operatorname{Sq}}
\newcommand{\Hom}{\operatorname{Hom}}
\newcommand{\Ext}{\operatorname{Ext}}
\newcommand{\GL}{{\operatorname{GL}}}
\renewcommand{\O}{{\operatorname{O}}}
\newcommand{\SO}{{\operatorname{SO}}}
\newcommand{\U}{{\operatorname{U}}}
\newcommand{\SU}{{\operatorname{SU}}}
\newcommand{\Sp}{{\operatorname{Sp}}}
\newcommand{\PSp}{{\operatorname{PSp}}}
\newcommand{\Spin}{{\operatorname{Spin}}}
\newcommand{\Spinc}{{{\operatorname{Spin}}^{\operatorname{c}}}}
\newcommand{\K}{\operatorname{K}}
\newcommand{\KU}{\operatorname{KU}}
\newcommand{\KO}{\operatorname{KO}}
\newcommand{\ku}{\operatorname{ku}}
\newcommand{\ko}{\operatorname{ko}}
\newcommand{\BH}{{\operatorname{BH}}}
\newcommand{\BG}{{\operatorname{BG}}}
\newcommand{\BO}{{\operatorname{BO}}}
\newcommand{\BSO}{{\operatorname{BSO}}}
\newcommand{\BU}{{\operatorname{BU}}}
\newcommand{\BSU}{{\operatorname{BSU}}}
\newcommand{\BPSp}{{\operatorname{BPSp}}}
\newcommand{\BSpin}{{\operatorname{BSpin}}}
\newcommand{\BSpinc}{{\operatorname{BSpin}^{\operatorname{c}}}}
\newcommand{\AC}{{{\alpha}^{\operatorname{c}}}}
\newcommand{\TC}{{T}^{\operatorname{c}}}
\newcommand{\MTC}{{{\mathcal T}^{\operatorname{c}}}}
\newcommand{\mTc}{{\m T}^{\operatorname{c}}}
\newcommand{\mT}{{\m T}}
\newcommand{\DC}{{\operatorname{D}^{\operatorname{c}}}}
\newcommand{\tc}{{\operatorname{t}^{\operatorname{c}}}}
\newcommand{\MO}{\operatorname{MO}}
\newcommand{\MSO}{\operatorname{MSO}}
\newcommand{\MU}{\operatorname{MU}}
\newcommand{\MSpin}{\operatorname{MSpin}}
\newcommand{\MSpinc}{{\operatorname{MSpin}^{\operatorname{c}}}}
\newcommand{\ol}{\overline}
\newcommand{\bd}{\bold}
\newcommand{\im}{\operatorname{im}}
\newcommand{\inv}{^{-1}}
\newcommand{\m}{\mathcal}
\newcommand{\f}{\mathfrak}
\newcommand{\hmod}{{/\!\!/}}
\renewcommand{\and}{\qquad\text{and}\qquad}
\newcommand{\pw}{,&\text{if }}
\newcommand{\op}[1]{{\operatorname{#1}}}
\def\RP{\mathbb{RP}}
\def\innerLmod(#1){{{}_{#1}\textbf{Mod}}}
\def\innerRmod(#1){{\textbf{Mod}_{#1}}}
\def\innerBimod(#1,#2){{{}_{#1}\textbf{Mod}_{#2}}}
\newcommand{\Lmod}{\text{-}\textbf{Mod}}
\newcommand{\copo}[2]{\left\langle#1,#2\right\rangle}
\newcommand{\jenny}[2]{\draw[fill=black] (#1,#2) circle (0.05);}
\newcommand{\leftcurved}[2]{\draw[semithick] (#1,#2) to[out=130,in=-130] (#1,#2+2);}
\newcommand{\rightcurved}[2]{\draw[semithick] (#1,#2) to[out=50,in=-50] (#1,#2+2);}
\newcommand{\upedge}[2]{\draw[semithick] (#1,#2) to[out=90,in=-90] (#1,#2+1);}
\newcommand{\ess}[2]{\draw[semithick] (#1,#2) to[out=0,in=190] (#1+2,#2+2);}
\newcommand{\leftcurvedlong}[2]{\draw[semithick] (#1,#2) to[out=120,in=-120] (#1,#2+3);}
\newcommand{\rightcurvedlong}[2]{\draw[semithick] (#1,#2) to[out=60,in=-60] (#1,#2+3);}
\newcommand{\diagramAone}[2]{
	\jenny{#1}{#2}\jenny{#1}{#2+1}\jenny{#1}{#2+2}\jenny{#1}{#2+3}
	\jenny{#1+2}{#2+3}\jenny{#1+2}{#2+4}\jenny{#1+2}{#2+5}\jenny{#1+2}{#2+6}
	\leftcurved{#1}{#2}
	\rightcurved{#1+2}{#2+4}
	\upedge{#1}{#2}\upedge{#1}{#2+2}
	\upedge{#1+2}{#2+3}\upedge{#1+2}{#2+5}
	\ess{#1}{#2+1}\ess{#1}{#2+2}\ess{#1}{#2+3}}
\newcommand{\diagramI}[2]{
	\jenny{#1}{#2}\jenny{#1}{#2+1}\jenny{#1}{#2+2}
	\jenny{#1+2}{#2+2}\jenny{#1+2}{#2+3}\jenny{#1+2}{#2+4}\jenny{#1+2}{#2+5}
	\ess{#1}{#2}\ess{#1}{#2+1}\ess{#1}{#2+2}
	\upedge{#1}{#2+1}\upedge{#1+2}{#2+2}\upedge{#1+2}{#2+4}
	\rightcurved{#1+2}{#2+3}}
\newcommand{\diagramJ}[2]{
	\jenny{#1}{#2-2}
	\jenny{#1}{#2-1}
	\jenny{#1}{#2}
	\jenny{#1}{#2+1}
	\jenny{#1}{#2+2}
	\upedge{#1}{#2-2}
	\upedge{#1}{#2+1}
	\leftcurved{#1}{#2-2}
	\leftcurved{#1}{#2}
	\rightcurved{#1}{#2-1}}
\newcommand{\diagramK}[2]{
	\jenny{#1}{#2}
	\jenny{#1}{#2+2}
	\jenny{#1}{#2+3}
	\upedge{#1}{#2+2}
	\leftcurved{#1}{#2}}
\newcommand{\diagramZ}[2]{\jenny{#1}{#2}}	
\newcommand{\diagramEone}[2]{
	\jenny{#1}{#2}
	\jenny{#1}{#2+1}
	\jenny{#1}{#2+3}
	\jenny{#1}{#2+4}
	\upedge{#1}{#2}
	\upedge{#1}{#2+3}
	\leftcurvedlong{#1}{#2}
	\rightcurvedlong{#1}{#2+1}}
\newcommand{\diagramL}[2]{
	\jenny{#1}{#2+1}
	\jenny{#1}{#2+3}
	\jenny{#1}{#2+4}
	\upedge{#1}{#2+3}
	\leftcurvedlong{#1}{#2+1}}
\newcommand{\diagramC}[2]{
	\jenny{#1}{#2}
	\jenny{#1}{#2+3}
	\leftcurvedlong{#1}{#2}}
\begin{document}



   \clearpage
   \thispagestyle{empty}
   {%
   \centering
   \begin{DoubleSpace} \MakeUppercase{\thesistitle}

   \ifdraftdoc
   (Draft generated \today)
    \else
   \relax
    \fi
   \end{DoubleSpace}
   
    \begin{vplace}[1]
    \begin{SingleSpace}
        by \\ \vspace{\onelineskip}
        \MakeUppercase{\thesisauthor}
    \end{SingleSpace}
    \end{vplace}

   \begin{SingleSpace}
   \MakeUppercase{A \thesisdoctype} \\ \vspace{\onelineskip}
   Presented to the \thesisdept \\
   and the Division of Graduate Studies of the University of Oregon \\
   in partial fulfillment of the requirements \\
   for the degree of \\
   \thesisdegree \\ \vspace{\onelineskip}
    \thesisgraddate
   \end{SingleSpace}
   }
   \enlargethispage{\bottafiddle}
   \clearpage 

  
    \clearpage
{%
   \thispagestyle{plain}
    \thesisapproval
   
   \raggedright

   \begin{SingleSpace}
   \noindent
   Student: \thesisauthor \\ \vspace{\onelineskip}
   \begin{SingleSpace}
   \noindent Title: \thesistitle 
   \end{SingleSpace}

   \vspace{\onelineskip}

   \noindent This \thesisdoctype \ has been accepted and approved in partial fulfillment of the requirements for the \thesisdegree \ degree in the \thesisdept \ by:

   \vspace{\onelineskip}

   \noindent %
   \begin{tabular}{@{}ll@{}}
   Boris Botvinnik \hspace{1in}    & Chairperson \\
   Nicolas Addington                & Core Member \\
   Robert Lipshitz                & Core Member \\
   Peng Lu                & Core Member \\
   Spencer Chang               & Institutional Representative \\
   & \\
   and & \\
   & \\
   Krista Chronister                 & Vice Provost for Graduate Studies
   \end{tabular}

   \vspace{\onelineskip}

    \noindent Original approval signatures are on file with the University of Oregon Division of Graduate Studies.   
    \end{SingleSpace}
   
   \vspace{\onelineskip}

   \noindent Degree awarded \thesisgraddate.
   \enlargethispage{\bottafiddle}
   \clearpage
}



\pagestyle{plain}

\begin{center}
\vspace*{-\uppermargin}
\vspace*{4.5in}
\textcopyright \ \thesisgradyear \  \thesisauthor\\
This work is licensed under a Creative Commons\\
\textbf{Attribution License.}\\
\vspace{15pt}
\includegraphics{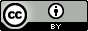}
\end{center}


   \clearpage
    {%
   \thispagestyle{plain}
   \thesisabstract

   \begin{DoubleSpace}
       
   \raggedright
   \noindent
   \thesisauthor \\ \vspace{\onelineskip}
   \noindent \thesisdegree \\ \vspace{\onelineskip}
   \noindent \thesisdept \\ \vspace{\onelineskip}
   \noindent \thesisgraddate \\ \vspace{\onelineskip}
   \noindent Title: \thesistitle 
   \end{DoubleSpace}
    }
   \vspace{\linespace}

In 1992, Stolz proved that, among simply connected $\Spin$-manifolds of dimension $5$ or greater, the vanishing of a particular invariant $\A$ is necessary and sufficient for the existence of a metric of positive scalar curvature. More precisely, there is a map $\A\colon\Omega_*^\Spin\to\ko$ (which may be realized as the index of a Dirac operator) which Hitchin established vanishes on bordism classes containing a manifold with a metric of positive scalar curvature. Stolz showed $\ker\A$ is the image of a transfer map $\Omega_{*-8}^\Spin\BPSp(3)\to\Omega_*^\Spin$. In this paper we prove an analogous result for $\Spinc$-manifolds and a related invariant $\AC\colon\Omega_*^\Spinc\to\ku$. We show that $\ker\AC$ is the sum of the image of Stolz's transfer $\Omega_{*-8}^\Spin\BPSp(3)\to\Omega_*^\Spinc$ and an analogous map $\Omega_{*-4}^\Spinc\BSU(3)\to\Omega_*^\Spinc$. Finally, we expand on some details in Stolz's original paper and provide alternate proofs for some parts.



\clearpage
   \thispagestyle{plain}
   \thesisacknowledgments
   
	Thank you to my advisor Boris Botvinnik and to my family, including mom, dad, Wes, Ashley, Reggie, Marge, Barbara, and Pancake.

   \newpage

   \thispagestyle{plain}
   \begin{vplace}[0.3] %
	For Grandma Marge
   \end{vplace} 
 
\tableofcontents




\pagestyle{uothesis}
 

\chapter{Introduction}

A major topic of differential geometry today is the study of Riemannian curvatures on a smooth manifold, and how this relates to topological invariants. Perhaps the simplest notion of curvature on a Riemannian manifold $(M,g)$ is the scalar curvature $s_g$. One way to define curvature is by comparing the volume of a geodesic ball $B_\e$ around a point $x\in M$ with a ball $B_\e^{(0)}$ of radius $\e$ in Euclidean space. More precisely, $s_g(x)$ is such that
  \[\frac{\op{Vol}(B_{\e})}{\op{Vol}(B_{\e}^{(0)})}=\left(1 -\frac{s_g(x)}{6(n+2)}\e^2+O(\e^4)\right).\]

In this document we address a fundamental question about Riemannian metrics of positive scalar curvature (psc metrics).
\begin{equation}\label{existence}
\text{Under which conditions does a manifold admit a psc metric?}
\end{equation}
The case for $2$-dimensional manifolds is unique: here the scalar curvature is simply twice the Gauss curvature $K_g$. The Gauss-Bonnet theorem states
\[\int_M s_g(x)d\sigma_g=4\pi\chi(M), \]
where $\sigma_g$ is the volume element corresponding to $g$ and $\chi(M)$ is the Euler characteristic of $M$. Hence, in $2$ dimensions, the existence of metrics of positive scalar curvature is a matter of which surfaces have positive Euler characteristic -- namely, $S^2$ and $\RP^2$.

The existence problem \ref{existence} has also been resolved for all manifolds of dimension five or greater which are simply connected. Under these conditions, Gromov and Lawson proved a crucial result of surgery theory. We recall some core definitions and facts of this field. By default, all manifolds we mention in this paper are compact.

A surgery on a manifold is defined as follows: let $S^p\times D^{q+1}\to M$ be an embedding of a sphere $S^p$ with a tubular neighborhood (a ``handle'') around it, where $p+q+1$ is the dimension of $M$. The surgery on $M$ given by this embedding produces a manifold $M'$ obtained by removing the interior of $S^p\times D^{p+1}\subset M$ and gluing in the handle $D^{p+1}\times S^q$:
\[M'=\left(M\setminus\op{int}\left(S^p\times D^{q+1}\right)\right)\cup_{S^p\times S^q}\left(D^{p+q}\times S^q\right). \]
In short, the effect of this surgery is to ``suture up'' the $p$-dimensional hole given by a topologically framed embedding of $S^p$. We say that the codimension of this surgery is $q+1$, since this is the codimension of $S^p$ in $M$. Gromov and Lawson established an important result about $\Spin$ manifolds and metrics of positive scalar curvature:
\begin{theorem}[\cite{gl}]\label{surgerylemma}
	Let $(M,g)$ be a compact Riemannian manifold with positive scalar curvature (a psc metric). If $M'$ is obtained from $M$ by a surgery of codimension $\ge 3$, then $M'$ carries a psc metric.
\end{theorem}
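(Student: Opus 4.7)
The plan is to deform the psc metric $g$ in a tubular neighborhood of the embedded sphere $N=\phi(S^p\times\{0\})\subset M$ so that it takes a standard product form near the boundary of this neighborhood; once this is achieved, the handle $D^{p+1}\times S^q$ can be glued in carrying a compatible psc metric, producing a psc metric on all of $M'$. The geometric fact that drives the argument is that the normal sphere $S^q$ to $N$ of radius $r$ contributes a term $q(q-1)/r^2$ to the scalar curvature, which is strictly positive precisely when $q\ge 2$, i.e. when the surgery has codimension $q+1\ge 3$. This quantity can be made arbitrarily large by shrinking $r$, providing a budget to absorb negative contributions from the rest of the construction.

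First I would introduce Fermi coordinates on the tubular neighborhood, so the metric takes the form $g_N + dr^2 + r^2 h_{S^q}$ plus error terms vanishing at $r=0$. Then, inside $M\times\R$, I would consider the hypersurface obtained by rotating a curve $\gamma(t)=(r(t),t)$ in the $(r,t)$-plane around the normal sphere bundle of $N$. The curve is chosen so that it begins nearly horizontal, so that the induced metric agrees with $g$ outside a small neighborhood of $N$, and ends in a vertical segment $r\equiv\delta$, so that near the boundary of the neighborhood the induced metric becomes $g_N^{(\mathrm{new})}+\delta^2 h_{S^q}+dt^2$, i.e.\ a cylinder over $N\times S^q_\delta$. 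A direct Gauss equation computation expresses the scalar curvature of this hypersurface in terms of the original scalar curvature, the principal curvatures of $\gamma$ (slope and geodesic curvature), and the dominant $q(q-1)/r^2$ term. With $\gamma$ bending gently enough and $\delta$ chosen sufficiently small, the positive sphere term absorbs all negative contributions.

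Finally, with a psc metric on $M\setminus\op{int}(S^p\times D^{q+1})$ having cylindrical end $N\times S^q_\delta\times[0,\e)$, the handle $D^{p+1}\times S^q$ is attached with a product metric $g_{\mathrm{torpedo}}+\delta^2 h_{S^q}$, where $g_{\mathrm{torpedo}}$ is a rotationally symmetric ``torpedo'' metric on $D^{p+1}$ that is flat cylindrical near the boundary (matching the end above) and caps off smoothly with a rounded tip in the center. The scalar curvature of this product is $s_{\mathrm{torpedo}}+q(q-1)/\delta^2$, and for $\delta$ small the second term again dominates, so the attached piece is psc and matches smoothly. The main obstacle is the curve construction in the middle step: $\gamma$ must bend from horizontal to vertical smoothly, match the existing metric on one end and the cylinder on the other, and never curve sharply enough to force the hypersurface scalar curvature negative. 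It is exactly the codimension $\ge 3$ hypothesis, via the large positive term $q(q-1)/r^2$ with $q\ge 2$, that provides the quantitative room to carry out this interpolation.
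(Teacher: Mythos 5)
Your outline is exactly the Gromov--Lawson argument from \cite{gl}, which is the source this paper cites for the theorem without reproducing a proof: the Fermi-coordinate normal form, the hypersurface in $M\times\R$ swept out by a curve $\gamma$ bending from horizontal to vertical, the dominant $q(q-1)/r^2$ term requiring $q\ge 2$, and the torpedo-capped handle are all the standard ingredients. You also correctly flag the genuinely delicate step (controlling the geodesic curvature of $\gamma$ during the bend), which is where the careful quantitative work in the literature lives; as a proof sketch this is faithful to the cited argument.
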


Theorem \ref{surgerylemma} has the following important corollary and related result which, for some classes of manifolds, allows us to only consider bordism classes when discussing the existence of psc metrics. For a review of $X$-bordism, see \cite[$\S 12.12$]{switzer}.
\begin{corollary}[\cite{gl}]\label{a1}
	Let $M$ be a simply connected $\Spin$ manifold of dimension $\ge 5$. If $M$ is $\Spin$-bordant to a manifold $M'$ with a psc metric, then $M$ also has a psc metric.
\end{corollary}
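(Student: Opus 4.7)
The plan is to use Theorem \ref{surgerylemma} to propagate the psc metric from $M'$ to $M$ along a sequence of codimension-$\geq 3$ surgeries. Write $n = \dim M$. A $k$-handle attached to a bordism on top of a manifold realizes a surgery of codimension $n - k + 1$, so my goal will be to produce a $\Spin$-bordism $W$ between $M$ and $M'$ admitting a handle decomposition over $M$ with no handles of index $0$, $1$, or $2$; the dual decomposition viewed over $M'$ will then consist of handles of index $\leq n - 2$, each realizing a codimension-$\geq 3$ surgery.

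First I would fix any $\Spin$-bordism $W$ between $M$ and $M'$ and arrange it to be connected. Using that $M$ is simply connected and $\dim W = n + 1 \geq 6$, I would then modify $W$ by surgery in its interior to turn $(W, M)$ into a $2$-connected pair. This proceeds in two stages. First I would perform $1$-surgery on a finite collection of embedded loops generating $\pi_1(W)$ to achieve $\pi_1(W) = 1$; the $\Spin$-structure extends over the attached $2$-handles because $S^1$ carries a unique spin structure. Next I would perform $2$-surgery on embedded $2$-spheres generating $\pi_2(W)$ to achieve $\pi_2(W) = 0$. The key observation here is that, since $W$ is spin, the normal bundle of an embedded $S^2 \subset W$ is a rank-$(n-1) \geq 4$ spin vector bundle over $S^2$, and any such bundle is trivial because $\pi_1(\Spin(k)) = 0$ for $k \geq 3$; thus standard surgery is available and the $\Spin$-structure extends. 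After these modifications $\partial W$ is unchanged, and the long exact sequence of the pair gives $\pi_k(W, M) = 0$ for $k \leq 2$.

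Next, since $(W, M)$ is $2$-connected and $\dim W \geq 6$, the standard handle-trading theorem of Smale and Wallace supplies a Morse function $f \colon W \to [0, 1]$ with $f^{-1}(0) = M$, $f^{-1}(1) = M'$, and all critical points of index $\geq 3$. The reversed function $1 - f$ then exhibits $M$ as the outcome of a sequence of codimension-$\geq 3$ $\Spin$-surgeries applied to $M'$. Iterating Theorem \ref{surgerylemma} along this sequence carries the psc metric from $M'$ to $M$.

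I expect the main obstacle to be the step killing $\pi_2(W)$, where preserving the $\Spin$-structure through $2$-surgery relies on the spin hypothesis on $W$ (to trivialize the normal bundles of the surgery spheres) and on $\dim W \geq 6$ (so that $2$-surgery does not reintroduce low-dimensional homotopy). One must also verify carefully that the decomposition produced by the Smale--Wallace theorem really comes from a Morse function whose level-set transitions are spin surgeries of the required codimension, rather than merely providing the topological handle structure one extracts from homotopical data.
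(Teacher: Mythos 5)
Your argument is the standard Gromov--Lawson proof of this corollary, which the paper does not reprove but simply cites from \cite{gl}: fix a connected $\Spin$-bordism $W$, make the pair $(W,M)$ $2$-connected by interior surgeries, trade away the low-index handles, and read the resulting decomposition backwards from $M'$ as a sequence of codimension-$\ge 3$ surgeries to which Theorem \ref{surgerylemma} applies. The structure, the index/codimension bookkeeping ($k$-handle $\leftrightarrow$ codimension $n-k+1$ surgery, so index $\ge 3$ over $M$ dualizes to index $\le n-2$ over $M'$), and the use of $\pi_1(M)=1$ and $\dim W\ge 6$ are all correct and match the cited source.

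One justification is wrong as stated, though the step it supports is fine: $S^1$ does \emph{not} carry a unique spin structure --- it carries two, distinguished by whether they extend over $D^2$, and if the spin structure induced on a surgery circle from $W$ were the non-bounding one, the $2$-handle attachment would not be a spin surgery. The correct repair is standard: the normal bundle of the circle is trivial of rank $n\ge 4$, and twisting its framing by the generator of $\pi_1(\SO(n))\cong\Z_2$ switches the induced spin structure on $S^1\times D^n$, so one can always choose a framing for which the spin structure extends over the handle. With that substitution (and your correct $w_2$-based triviality argument for the normal bundles of the $2$-spheres), the proof is complete.
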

For manifolds which are not $\Spin$, Gromov and Lawson proved the following.
\begin{theorem}[\cite{gl}]\label{a2}
	Let $M$ be a simply connected manifold of dimension $\ge 5$ which is \textit{not} $\Spin$. Then $M$ admits a psc metric.
\end{theorem}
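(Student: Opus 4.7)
The strategy mirrors Corollary \ref{a1}, replacing $\Spin$-bordism with oriented bordism, which becomes available precisely because $M$ is non-spin. The plan has two parts: (i) prove an oriented-bordism analogue of Corollary \ref{a1}, and (ii) show that every oriented bordism class in dimension $\ge 5$ contains a representative with a psc metric.

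For (i), my plan is to show: if $M$ is a simply connected non-spin manifold of dimension $n \ge 5$ and is oriented-bordant to a manifold $M'$ admitting a psc metric, then $M$ admits a psc metric. Given a bordism $W$ from $M'$ to $M$, standard handle-trading in dimension $\ge 6$ lets me rearrange its handle decomposition so that every handle has index between $3$ and $n-2$; simple connectivity of both boundary components is the key ingredient that allows $0$-, $1$-, $(n-1)$-, and $n$-handles to be cancelled. Each surviving handle corresponds to a surgery of codimension $\ge 3$, so starting from $M'$ with its psc metric and applying Theorem \ref{surgerylemma} successively yields a psc metric on $M$. I also need to verify that the intermediate manifolds appearing between $M'$ and $M$ remain simply connected, which follows from the index constraint on the handles.

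For (ii), I would use the structure of the oriented bordism ring. Modulo torsion, $\Omega_*^{SO}$ is a polynomial ring on the classes $[\CP^{2k}]$ (Thom), all of which carry the Fubini--Study psc metric. The $2$-torsion can be generated by Dold manifolds $P(m,n)$ and related $\CP^{2k}$-bundles over spheres, which admit psc metrics via standard submersion or warped-product constructions (the fiber carries a psc metric of large enough positivity to dominate base contributions). Since all generators admit psc and psc is preserved under disjoint union and (via the surgery lemma) under connected sum in dimension $\ge 5$, every oriented bordism class has a psc representative, which by further surgery can be taken simply connected and non-spin (connect-summing with $\CP^2$ if necessary, in dimension $\ge 4$, to ensure non-spin).

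The main obstacle will be part (ii): handling the $2$-torsion in $\Omega_*^{SO}$ carefully. Unlike the rational part, which is transparently psc via projective spaces, producing explicit psc generators for all torsion classes in each dimension requires a case-by-case geometric construction, and one must check that these generators (or connect sums involving them) can be arranged to be simply connected and non-spin so that the transfer argument in (i) applies. A secondary technical point is verifying that the handle-trading in (i) never forces us to pass through a spin intermediate manifold in a way that would preclude applying the argument in stages, though in practice codimension-$\ge 3$ surgery on an oriented manifold does not interfere with the non-spin condition once it has been arranged on one end.
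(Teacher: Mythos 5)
The paper does not actually prove Theorem \ref{a2}; it is quoted from \cite{gl} without proof, so your outline has to be measured against the standard Gromov--Lawson argument, which is indeed the strategy you describe (bordism invariance via Theorem \ref{surgerylemma}, plus the fact that $\Omega_*^\SO$ is generated in positive degrees by psc manifolds).

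There is one genuine gap, in part (i), and it sits exactly where the non-spin hypothesis has to do its work. Simple connectivity of $M$, $M'$ and $W$ lets you cancel the handles of index $0$, $1$, $n$ and $n+1$ of the bordism $W^{n+1}$, but it does \emph{not} let you eliminate the handles of index $n-1$ (equivalently, the $2$-handles attached from the $M$ side); a handle of index $n-1$ attached from the $M'$ side is a codimension-$2$ surgery, which Theorem \ref{surgerylemma} does not cover. If simple connectivity alone sufficed here, the identical argument would show that every simply connected \emph{spin} manifold of dimension $\ge 5$ that is oriented-bordant to a psc manifold carries a psc metric --- which is false, since every class in $\Omega_n^\SO$ in these dimensions has a psc representative while (\ref{eq-dirac}) obstructs psc whenever $\A([M])\neq 0$. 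The correct mechanism is: $W$ can be built from $M'$ by handles of index $\le n-2$ once $(W,M)$ is $2$-connected, and making $(W,M)$ $2$-connected requires surgery on embedded $2$-spheres in the interior of $W$; such a surgery needs the normal bundle of the $2$-sphere to be trivial, i.e.\ its $w_2$ to vanish. This is precisely where you use that $M$ is not spin: $w_2\colon\pi_2(M)\to\Z_2$ is then onto, so any class in $\pi_2(W)$ can be corrected by a class coming from $\pi_2(M)$ (without changing its image in $\pi_2(W,M)$) so as to have $w_2=0$, and then surgered away. Once this is supplied, part (i) is sound. By contrast, part (ii), which you flag as the main obstacle, is the routine half: Wall's generators of $\Omega_*^\SO$ (even complex projective spaces and Dold manifolds, the latter being $\CP^n$-bundles over $\RP^m$) all carry psc metrics in positive degrees, and psc passes to products and disjoint unions; moreover, in the sharp form of the bordism theorem only the target $M$ needs to be simply connected and non-spin, so you need not arrange those properties on the psc representative at all.
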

\begin{remark*}
	There is inconsistency in the literature on the usage of ``bordism'' versus ``cobordism.'' Following Rudyak (\cite[$7.19$]{rud}), we tend to use ``bordism'' ring and say that two manifolds are ``bordant.'' This terminology may be more common today since it allows authors to reserve ``cobordism'' for the dual cohomology theory (as in \cite[$7.30$]{rud} and \cite[$\S 12$]{switzer}). Historically the term ``cobordism'' was more common for the homology theory (as in \cite{stong}). This is presumably since ``cobordant'' has a literal interpretation of two manifolds which together bound another manifold (the French verb ``\textit{bordant}'' means ``to bound'').
\end{remark*}


We can summarize corollary \ref{a1} and theorem \ref{a2} and rephrase them as follows: for $M$ a simply connected manifold of dimension $n\ge 5$,
\begin{enumerate}
	\item[$\bullet$] if $M$ is $\Spin$, then the existence of a psc metric on $M$ depends only on the $\Spin$ bordism class $[M]\in\Omega_n^\Spin$;
	\item[$\bullet$] if $M$ is \textit{not} $\Spin$, then the existence of a psc metric on $M$ depends only on the oriented bordism class $[M]\in\Omega_n^\SO$.
\end{enumerate}

\begin{remark*}
	In particular, for simply connected manifolds of dimension $\ge 5$, the existence of a psc metric is determined by purely topological data. Similarly, a manifold $M$ is $\Spin$ if and only if $w_2(M)=0$ and simply connected if and only if $w_1(M)=0$ (assuming $M$ is connected). The equivalence classes of $\Spin$-structures on a manifold are in bijection with $H^1(M)$ (with coefficients in $\Z_2$), hence, if a simply connected manifold admits a $\Spin$-structure, the $\Spin$-structure is unique.
\end{remark*}

Since at least the early 1960s, it was known that the presence of a $\Spin$-structure on a manifold was had an intimate relationship with that of a psc metric. A Riemannian manifold $(M,g)$ which is $\Spin$ has a Dirac operator $D_g$ acting on the bundle of spinors over $M$, and in 1963 Lichnerowicz (\cite{lich}) proved the formula
\begin{equation}\label{eq-dirac}
	D_g^2=\nabla^*\nabla+\frac14s_g,
\end{equation}
where $\nabla$ is a covariant derivative and $\nabla^*$ is the formal adjoint. Equation (\ref{eq-dirac}) implies that if $s_g$ is positive, $D_g$ is invertible (i.e., there are no harmonic spinors on $M$). If $M$ is a $\Spin$ manifold of dimension $n$ with a psc metric, the Lichnerowicz formula implies that $[M]\in\Omega_n^\Spin$ lies in the kernel of the index
\begin{equation}\label{eq-alpha}
	\A\colon\Omega_n^\Spin\to\ko_n.
\end{equation}
Following Stolz's work, we had a complete answer to the existence problem \ref{existence} for simply connected manifolds of dimension $\ge 5$. Under these conditions,
\begin{itemize}
	\item [(i)] if $M$ is not $\Spin$, then $M$ admits a psc metric (\cite{gl});
	\item [(ii)] if $M$ is $\Spin$, then $M$ has a psc metric if and only if $\A([M])=0$ (\cite{stolz}).
\end{itemize}
\begin{remark*}
	In \cite{gl}, Gromov and Lawson proved the rational version of Stolz's result, which can be stated as follows: for a simply connected $\Spin$-manifold of dimension $n\ge 5$, if $\A([M])=0$, then some multiple $M\#M\#\cdots\#M$ admits a psc metric.
\end{remark*}

The techniques used by Stolz in his proof are central to the results in this paper. We now describe the geometric idea behind Stolz work. Let $\HP^2$ be the quaternionic projective space with the standard metric $g_0$. It is well known that the group of isometries of $\HP^2$ (with respect to $g_0$) is the projective group $\PSp(3)$ (which is the quotient of $\Sp(3)$ by $\{\pm1\}$). We now make an elementary observation.



\begin{lemma}
	Let $E$ be the total space of a smooth bundle over a manifold $M$ with fiber $\HP^2$ and structure group $\PSp(3)$. Then $E$ carries a psc metric.
\end{lemma}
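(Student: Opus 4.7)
The plan is to use the fact that $(\HP^2, g_0)$ is a compact symmetric space of positive scalar curvature, and then apply the standard Riemannian submersion construction with small fiber scaling.

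First I would put an arbitrary Riemannian metric $g_M$ on the base $M$ (which is compact), and fix a principal $\PSp(3)$-connection on the underlying principal bundle associated to $E\to M$. This connection determines a horizontal distribution $\mathcal H\subset TE$ complementary to the vertical distribution $\mathcal V=\ker(d\pi)$, where $\pi\colon E\to M$ is the projection. Because the structure group $\PSp(3)$ acts on $\HP^2$ by isometries of $g_0$, the fiberwise metric $g_0$ is well-defined on each fiber of $E$ regardless of the choice of local trivialization. Thus for each $t>0$ I obtain a Riemannian metric $g_t$ on $E$ by declaring $\mathcal H\perp\mathcal V$, setting $g_t|_{\mathcal H}=\pi^*g_M$, and setting $g_t|_{\mathcal V}=t^2g_0$.

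Next, I would compute the scalar curvature $s_{g_t}$ using the O'Neill formulas for Riemannian submersions (see Besse's \emph{Einstein Manifolds}, Chapter 9). The key feature of these formulas is how the fiber scaling parameter $t$ enters: the scalar curvature of the fiber, which equals $t^{-2}s_{g_0}$, appears as the leading term, while the contributions from the scalar curvature of the base, the norms of the O'Neill tensors $A$ and $T$, and the mean curvature of the fibers are all either bounded or tend to zero as $t\to0$. Concretely, one obtains an expression of the form
\[
s_{g_t}(e)=\frac{s_{g_0}}{t^2}+\pi^*s_{g_M}(e)+O(t^2)
\]
as $t\to 0$, uniformly in $e\in E$ (the uniformity uses compactness of $E$, which follows from compactness of $M$ and $\HP^2$).

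Since $\HP^2$ with its symmetric metric has strictly positive (in fact constant positive) scalar curvature $s_{g_0}>0$, and since all other terms are uniformly bounded, we can choose $t>0$ sufficiently small so that $s_{g_t}>0$ everywhere on $E$. This produces the desired psc metric. The only real subtlety is verifying the O'Neill formula and the uniform boundedness of the error terms, but this is standard once one sets up the submersion correctly; the structure group condition is exactly what is needed to ensure that the fiber metrics fit together globally and that the tensors $A$, $T$ and the mean curvature vector are globally defined smooth tensors on the compact manifold $E$.
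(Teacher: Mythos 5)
Your proof is correct, and it takes a different (and more careful) route than the one in the paper. The paper's proof trivializes $E$ locally over a cover of $M$, takes the product metric $g_M\times\lambda g_0$ on each piece, and asserts that because $\PSp(3)$ acts by isometries these local product metrics glue to a global metric. That gluing step is actually the delicate point: the transition functions are fiberwise isometries but are generally non-constant over the base, so the local product structures (equivalently, the horizontal distributions) do not agree on overlaps, and the local metrics do not literally patch together. Your construction is the standard way to repair this: a principal connection supplies a globally defined horizontal distribution, the isometric action of the structure group makes the fiberwise metric $t^2g_0$ globally well defined, and the resulting submersion metric $g_t$ has totally geodesic fibers, so O'Neill's formulas (the ``canonical variation'' computation in Besse, Ch.~9) give
\[
s_{g_t}=t^{-2}\,s_{g_0}+s_{g_M}\circ\pi+O(t^2),
\]
uniformly on the compact total space, and shrinking $t$ wins. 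What the paper's version buys is brevity and the correct intuition (shrink the fiber so its positive curvature dominates); what yours buys is an actual global metric and an honest curvature estimate. The only cosmetic caveat is that the precise form of the $O(t^2)$ error depends on the scaling convention ($t$ versus $t^2$ on the vertical distribution), but the qualitative statement you need -- fiber scalar curvature blows up while the base and $A$-tensor contributions stay bounded -- is exactly right.
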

\begin{proof}
	Cover $M$ by open sets $U_1,\ldots, U_n$ over which $E$ is locally (diffeomorphic to) $D_i\times \HP^2$. Let $g_M$ be the the metric on $M$ and $g_0$ be the standard metric on $\HP^2$. Since $M$ is compact, $s_{g_M}$ bounded, whence there exists sufficiently small $\lambda>0$ such that $g\coloneqq g_M\times \lambda g_0$ has positive scalar curvature on each neighborhood: $s_g=s_{g_M}+\lambda\inv s_{g_0}$. Again for $\lambda$ sufficiently small, this holds over each of the $U_i$. Finally, since $\PSp(3)$ acts isometrically on $\HP^2$, these local metrics glue to form a global psc metric. 
\end{proof}

\begin{remark*}
	It is an elementary fact that, for any $\lambda>0$, the scaled metric $\lambda g$ is a Riemannian metric with scalar curvature $s_{\lambda g}=\lambda\inv s_g$. In fact, since scalar curvature can be defined as a sum of sectional curvature, one's geometric intuition for surfaces (roughly) shows this for higher dimensions. 
\end{remark*}

%

Of course $\PSp(3)$ is a compact Lie group, so we have a classifying space $\BPSp(3)$ with a universal $\PSp(3)$-bundle and the associated $\HP^2$-bundle. We now define a transfer map
\begin{equation}\label{eq-trans}
	\Psi\colon\Omega_n^\Spin\BPSp(3)\to\Omega_{n+8}^\Spin
\end{equation}
as follows: a class in $\Omega_n^\Spin\BPSp(3)$ is represented by a pair $(M,f)$, where $M$ is an $n$-dimensional $\Spin$ manifold and $f$ is a map $M\to\BPSp(3)$. Let $\widehat M$ be the total space of the pullback via $f$ of the $\HP^2$-bundle. Note that $\widehat M\to M$ is an $\HP^2$-bundle which also has the structure group $\PSp(3)$ acting isometrically on the fibers. Now we have the following main result from \cite{stolz}.
\begin{theorem}[Stolz]\label{stz}
	For the transfer map $\Psi\colon\Omega_n^\Spin\BPSp(3)\to\Omega_{n+8}^\Spin$	 and index map $\A\colon\Omega_n^\Spin\to\ko_n$, we have $\im\Psi=\ker\A$.
\end{theorem}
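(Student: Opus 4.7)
The plan is to prove the two inclusions separately, the first geometrically and the second via spectrum-level calculations.

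\textbf{The easy inclusion} $\im\Psi\subseteq\ker\A$. Given a class $[M,f]\in\Omega_n^\Spin\BPSp(3)$, the total space $\widehat M=\Psi([M,f])$ is an $\HP^2$-bundle over $M$ with structure group $\PSp(3)$, so the lemma just preceding the theorem shows that $\widehat M$ admits a psc metric. The Lichnerowicz formula \eqref{eq-dirac} then forces the Dirac operator on $\widehat M$ to be invertible, so its index $\A([\widehat M])$ vanishes. Hence $\A\circ\Psi=0$.

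\textbf{The hard inclusion} $\ker\A\subseteq\im\Psi$. My strategy is to promote the geometric setup to a map of spectra and establish a cofiber sequence
\[
  \mT\;\xrightarrow{\;\Psi\;}\;\MSpin\;\xrightarrow{\;\A\;}\;\ko.
\]
Here $\mT$ is a Thom-type spectrum constructed from the universal $\HP^2$-bundle over $\BPSp(3)$ (equipped with a fiberwise $\Spin$ structure), built so that $\pi_n(\mT)\cong\Omega_{n-8}^\Spin\BPSp(3)$ and so that the induced map on homotopy groups is the geometric transfer \eqref{eq-trans}. The map $\A\colon\MSpin\to\ko$ is taken to be the Atiyah--Bott--Shapiro orientation, which realizes the index $\A$ on homotopy. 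The composite $\A\circ\Psi$ is null-homotopic at the level of spectra, by a Pontryagin--Thom enhancement of the geometric argument above applied to the universal bundle. Once the cofiber sequence is in place, the associated long exact sequence in homotopy immediately yields $\im\Psi_*=\ker\A_*$.

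\textbf{Identifying the cofiber.} To prove $\op{cofib}(\Psi)\simeq\ko$ I would argue prime by prime using the Anderson--Brown--Peterson splitting: $2$-locally, $\MSpin$ splits as a wedge of suspensions of $\ko$, $\ko\langle 2\rangle$, and $H\F_2$, and $\A$ projects onto the single leading $\ko$-summand; odd-locally, $\MSpin\simeq\MSO$ splits into copies of $BP$ which are detected by the Pontryagin character and carry no contribution to $\A$. The claim is therefore that $\mT$ is equivalent, away from the $\ko$-summand, to the whole remainder of $\MSpin$. This is reduced to a cohomological statement: compute $H^*(\mT;\F_2)$ as a module over the Steenrod algebra and show it coincides with the direct sum of the non-$\ko$ cyclic submodules in the ABP decomposition of $H^*(\MSpin;\F_2)$; an Adams spectral sequence comparison then upgrades this to an equivalence of spectra, and the odd-primary analog is simpler.

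\textbf{Main obstacle.} The decisive step will be this cohomological identification. One must track how the symplectic Pontryagin classes of the universal $\PSp(3)$-bundle interact with the $\Sq^i$-action on the Thom class of $\mT$, and verify that the resulting cyclic Steenrod submodules exhaust precisely the non-$\ko$ ABP summands in $H^*(\MSpin;\F_2)$. Any mismatch would leave residual summands unaccounted for, producing classes in $\ker\A$ outside $\im\Psi$; conversely, extra summands would contradict the null-homotopy of $\A\circ\Psi$. Executing this matching explicitly, and then leveraging an Adams spectral sequence or a direct Postnikov-style construction to promote it to a genuine equivalence of spectra, is where the bulk of the technical work lies.
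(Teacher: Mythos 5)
Your first inclusion is exactly the paper's argument (psc metric on the $\HP^2$-bundle plus Lichnerowicz) and is fine. The gap is in the second half: you propose to prove $\ker\A\subseteq\im\Psi$ by exhibiting a cofiber sequence $\MSpin\w\Sigma^8\BPSp(3)_+\xrightarrow{T}\MSpin\xrightarrow{D}\ko$ and reading off exactness from the long exact sequence of homotopy groups. No such cofiber sequence exists. If it did, then (since cofiber and fiber sequences of spectra agree) $\MSpin\w\Sigma^8\BPSp(3)_+$ would be equivalent to the homotopy fiber $\widehat\MSpin$ of $D$, and in particular $T$ would induce an injection on mod-$2$ homology. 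It does not: $H_*(\MSpin\w\Sigma^8\BPSp(3)_+)\cong H_*\MSpin\T H_*(\Sigma^8\BPSp(3)_+)$, and because $H_*\BPSp(3)$ is a polynomial algebra on four generators this has a Poincar\'e series strictly dominating that of $H_*\widehat\MSpin$, which is controlled by the exact sequence relating it to $H_*\MSpin$ and $H_*\ko$. The transfer has an enormous kernel. The same objection defeats your ``identifying the cofiber'' step: $H^*(\mT;\F_2)$ does not coincide with the sum of the non-$\ko$ summands in the Anderson--Brown--Peterson decomposition of $H^*(\MSpin;\F_2)$; it only contains such a sum as a split summand.

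What you actually need is strictly weaker, and it is what Stolz proves. Since $DT$ is nullhomotopic, $T$ lifts to $\widehat T\colon\MSpin\w\Sigma^8\BPSp(3)_+\to\widehat\MSpin$, and it suffices that $\widehat T$ be \emph{surjective} on homotopy groups: then $\im T_*=i_*(\im\widehat T_*)=\im i_*=\ker D_*$ by exactness of $\pi_*\widehat\MSpin\to\pi_*\MSpin\to\pi_*\ko$. Surjectivity of $\widehat T_*$ is established not by an equivalence of spectra but by showing that $\widehat T$ induces a split surjection of $\m A_*$-comodules on mod-$2$ homology --- reduced by a change-of-rings to an $\m A(1)$-module computation with $H^*\BPSp(3)$, and ultimately to a statement about primitives/indecomposables --- after which the Adams spectral sequence comparison gives surjectivity on homotopy (the relevant spectral sequence for the target collapses). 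If you replace your cofiber-sequence claim with this surjectivity statement, the rest of your outline (the $2$-local splitting of $\MSpin$, the Steenrod-algebra computation of the transfer on the Thom class, the deferral of odd primes to the rational result of Gromov--Lawson) becomes essentially the right program.
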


Since Lichnerowicz's formula implies $\im\Psi\subseteq\ker\A$, Stolz had to show $\ker\A\subseteq\im\Psi$. He achieved this by reducing to a homotopy-theoretic problem: to begin with, the transfer $\Psi$ may be considered as the map induced on homotopy of a map of spectra
\begin{equation}\label{eq-trans2}
	T\colon\MSpin\w\Sigma^8\BPSp(3)_+\to\MSpin.
\end{equation}
The index map $\A$ is similarly induced by a map
\begin{equation}
	D\colon \MSpin\to\ko.
\end{equation}
The algebraic fact that $\A([M])=0$ if $M$ has a psc metric, for example, translates to statement that $DT$ is nullhomotopic. This means that $T$ factors through the homotopy fiber $\widehat\MSpin$ of $D$. We write $\widehat T$ for this lift and $i\colon\widehat\MSpin\to\MSpin$ for the inclusion of $\widehat\MSpin$. Theorem \ref{stz} then follows from showing $\widehat T$ induces a surjection of homotopy groups.

It is worth emphasizing here that, prior to Stolz's result in \cite{stolz}, the same result had been proven rationally in \cite{gl}. As was well known by that time, the map $\MSpin\to\MSO$ is a homotopy equivalence for any coefficient ring containing $1/2$. Hence the most difficult component is the $2$-primary data. Hence, in this paper, we work entirely with coefficients in $\Z_2=\Z/(2)$ for homology and cohomology. We may also abuse notation and write $\pi_*X$ to mean $\pi_*(X)\T\Z_{(2)}$.
 
\section{Twisted scalar curvature for $\Spinc$ manifolds}

As we have seen, existence of a $\Spin$ structure (and consequently of the Dirac operator) plays a fundamental role in the existence of a psc metric. More generally, a $\Spinc$-structure implies the existence of a $\Spinc$ Dirac operator.
\\

Let $M$ be a simply-connected manifold with
  $w_2(M)\neq 0$ (i.e. $M$ is not a spin manifold). Then $M$ has a
  $\Spinc$-structure if there is a class $c\in H^2(M;\Z)$ which maps
  to $w_2(M)$ under the mod-2 reduction map $H^2(M;\Z)\to
  H^2(M;\Z_2)$.  The class $c$ gives a map $c: M\to \CP^{\infty}$ and,
  consequently, a complex line bundle $L \to M$. We use the notation
  $(M,L)$ for a manifold with a choice of $\Spinc$-structure.  We
  notice that a $\Spin$-manifold $M$ has a canonical $\Spinc$ corresponding to the trivial complex line bundle over $M$.
\\

Assume $(M,L)$ is a non-$\Spin$ $\Spinc$ manifold. We
  choose a Riemannian metric $g$ on $M$, a Hermitian metric $h$ on
  $L$, and a unitary connection $A_L$ on the line bundle $L$. These data
  give the $\Spinc$ Dirac operator $D_{(M,L)}$. Then we have the Lichnerowicz formula \cite{lich}
  \begin{equation}\label{eq-dirac-c}
  D_{(M,L)}^2 = \nabla\nabla^* + \frac{1}{4}s_g + \m R_L
  \end{equation}  
here
\begin{equation*}
 \m{R}_L =\frac12\sum_{j<k}F_L(e_j,e_k)\cdot e_j\cdot e_k,
\end{equation*}
where one sums over an orthonormal frame, and where $F_L$ is the
curvature of the connection $A_L$ on the line bundle $L$. We denote
\begin{equation*}
s^L_{g}:=s_{g} + 4\mathcal{R}_L,
\end{equation*} 
and say that $s^L_{g}$ is the \emph{$L$-twisted scalar curvature}. Notice that $s^L_{g}$ is, in fact, a zeroth order operator on spinors, and it depends on a choice of the hermitian metric $h$ on $L$ and on the connection $A_L$.

\section{Existence of positive $L$-twisted
    scalar curvature}
    
One of our primary goals is to resolve the following existence question: \emph{under which conditions on a non-$\Spin$ $\Spinc$ manifold $(M,L)$ does there exist a Riemannian metric $g$, a Hermitian metric $h$ on $L$, and a connection $A_L$ such that the $L$-twisted scalar curvature $s_g^{L}$ is positive?}

It turns out that the positivity of the $L$-twisted
  scalar curvature $s_g^{L}$ is also invariant under surgeries of
  codimension at least three. In particular, this implies that for a
  simply-connected manifold $M$ the existence of the data $(g,h,A_L)$
  as above such that $s_g^{L}$ is a positive operator, depends only on
  the $\Spinc$-cobordism class $[(M,L)]\in \Omega^{\Spinc}$.

Next, we recall that the index of the $\Spinc$ Dirac
  operator $D_{(M,L)}$ takes values in $\ku_*$, where $\ku$ is the connective cover of the complex $K$-theory spectrum $\KU$. In particular, we have a $\Spinc$-version of the $\alpha$ invariant
   \begin{equation}\label{eq-alpha-c}
\AC : \Omega^{\Spinc}_n\to\ku_n.
  \end{equation} 
The formula (\ref{eq-dirac-c}) shows that, if the $L$-twisted scalar
curvature $s_g^{L}$ is positive, then the $\Spinc$-bordism class
$[(M,L)]\in \Omega^{\Spinc}_n$ is in the kernel of the index map
$\AC$. This leads to the following existence result due to
Botvinnik and Rosenberg.
\begin{theorem}[\cite{br22, br18}]\label{BR1}
Let $(M,L)$ be a simply connected non-$\Spin$ $\Spinc$ manifold of
dimension $n\geq 5$. Then $M$ admits a Riemannian psc metric $g$, a
hermitian metric $h$ and a connection $A_L$ such that $s_{g}^L >0$ if
and only if the $\Spinc$ index $\AC[(M,L)]$ vanishes in $\ku_n$.
\end{theorem}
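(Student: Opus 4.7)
The necessity is immediate from the twisted Lichnerowicz formula (\ref{eq-dirac-c}): if $s_g^L > 0$, then $D_{(M,L)}^2 = \nabla\nabla^* + \tfrac14 s_g^L$ is a strictly positive self-adjoint operator, so $\ker D_{(M,L)} = 0$ and $\AC([(M,L)]) = \op{ind}(D_{(M,L)}) = 0$ in $\ku_n$.

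For sufficiency I would mirror Stolz's strategy for Theorem \ref{stz}. First, invoke the $\Spinc$-surgery principle stated in the excerpt — positivity of $s_g^L$ is preserved by codimension-$\geq 3$ surgeries — together with a Gromov-Lawson/Kreck style handle-trading argument for simply connected $\Spinc$-manifolds of dimension $\geq 5$, to reduce the claim to the bordism-theoretic statement that every class in $\ker\AC\subseteq \Omega_n^\Spinc$ admits \emph{some} representative carrying data $(g, h, A_L)$ with $s_g^L > 0$.

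I would then construct enough such representatives via two geometric transfers, mirroring (\ref{eq-trans}): Stolz's transfer $\Psi\colon \Omega_{n-8}^\Spin\BPSp(3)\to\Omega_n^\Spinc$ and an analogous $\Psic\colon \Omega_{n-4}^\Spinc\BSU(3)\to\Omega_n^\Spinc$ using $\CP^2$-bundles with structure group $\SU(3)$. Representatives in $\im\Psi$ are total spaces of $\HP^2$-bundles, which carry an ordinary psc metric by the Lemma in the excerpt, so pairing with the trivial line bundle gives $s_g^L = s_g > 0$. For representatives in $\im\Psic$, I would use a warped-product metric $g = g_M + \lambda g_0$ scaling the Fubini-Study metric on each $\CP^2$ fiber, and construct $L$ on the total space by combining the pullback of the base's $\Spinc$ line bundle with a suitable tensor power of the fiberwise tautological bundle. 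Since $\SU(3)$ acts isometrically on $(\CP^2, g_0)$ and preserves this line-bundle data, the construction glues globally; for small $\lambda$ the positive fiber contribution $\lambda^{-1} s_{g_0}$ then dominates both $s_{g_M}$ and the curvature term $4\mathcal{R}_L$, yielding $s_g^L > 0$.

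The main obstacle is not the geometric construction above but the identification of $\ker\AC$ with $\im\Psi + \im\Psic$ — the $\Spinc$-analogue of Stolz's homotopy-theoretic reduction. Concretely, one must lift the two transfers to a joint map of spectra into the homotopy fiber of $\Dc\colon \MSpinc \to \ku$ and argue, via a $2$-local analysis of $\MSpinc$ as a module over the Steenrod algebra, that the induced map on homotopy groups is surjective. Carrying out this Adams-spectral-sequence computation — together with the bookkeeping ensuring that the $\Spinc$-bordism class produced topologically by $\Psic$ coincides with the bordism class of the geometric $(M', L')$ described above — is the heart of the work and exactly what the main theorem of this thesis is designed to supply.
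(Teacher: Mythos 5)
Your proposal is correct and follows essentially the same route as the paper: necessity via the twisted Lichnerowicz formula (\ref{eq-dirac-c}), and sufficiency via the surgery/bordism reduction for simply connected $\Spinc$-manifolds of dimension $\geq 5$ combined with the identification $\ker\AC=\im\m T+\im\MTC$ realized by $\HP^2$- and $\CP^2$-bundle representatives carrying positive $L$-twisted scalar curvature. That identification is exactly Theorem \ref{mymain}, which is what the rest of the thesis supplies, so your deferral of the Adams spectral sequence argument to the main theorem matches the paper's own structure.
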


A proof of Theorem \ref{BR1} is given in \cite{br22} Theorem 3.8
and \cite{br18} Corollary 32 which relies on deep results in
cobordism and homotopy theory. A main goal of this thesis is to give a
direct proof of Theorem \ref{BR1} using the same technology developed by Stolz in \cite{stolz}.

As with the $\Spin$ case, we will ignore odd-primary data. The proof for the case of odd primes also appears in \cite{br22}; in this paper we give a direct, constructive proof for the $2$ primary case.

\section{New transfer map and main result}

We notice that the complex projective spaces $\CP^{2k}$ are non-spin manifolds. In
particular, $\CP^2$ has a standard $\Spinc$-structure given by a
complex line bundle $L_0\to \CP^2$ with $c_1(L_0)=x$, where $x$ is a
generator of $H^2(\CP^2;\Z)$. A calculation of
$\AC[(\CP^{2k},L_0)]$ was worked out by Hattori \cite{hat},
in particular, he shows that $\AC[(\CP^2,L_0)]=0$.

The projective plane $\CP^2$ has a remarkable property -- namely, the group $G=\SU(3)$ acts transitively on $\CP^2$, and $\CP^2= G/H$, where the subgroup $H\coloneq\SU(2,1)$ is the subgroup of elements in $\U(2)\times\U(1)\subset\U(3)$ with determinant $1$. We obtain a fiber bundle $p: \BH\to \BG$ with fiber $\CP^2$ and structure group $\SU(3)$. Thus given a $\Spinc$-manifold $(M, L)$ and a map $f: M\to BG$, we can form the associated $\CP^2$-bundle $\hat p: E\to M$ as a pull-back


\begin{figure}[h!]\centering
	\begin{tikzcd}
		E \ar[d,"\widehat p"] \ar[r] & \BH \ar[d,"p"]\\
		M \ar[r,"f"] & \BG 
	\end{tikzcd}
\end{figure}
\noindent
where $E=M\times_f \CP^2$ has dimension $n+4$ and has
  a $\Spinc$ structure inherited from the $\Spinc$ structure on $M$
  defined by $L$ and the $\Spinc$ structure on $\CP^2$ defined by the
  bundle $L$. This construction defines a transfer map
\begin{equation}\label{mtc}
\MTC: \Omega_{n-4}^{\Spinc}(\BSU(3)) \to \Omega_{n}^{\Spinc}. 
\end{equation}
We also have a $\Spinc$-version of the Stolz transfer map:
\begin{equation}\label{mt}
\m T: \Omega_{n-8}^{\Spinc}(\BPSp(3)) \to \Omega_{n}^{\Spin}. 
\end{equation}
Here is the main result of this thesis:
\begin{theorem}\label{mymain}
  The transfer maps $\m T$ and $\MTC$ are such that
  \[\im(\m T)+\im(\MTC)=\ker \AC\]
  as abelian groups.
\end{theorem}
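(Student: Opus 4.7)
The plan is to extend Stolz's strategy to the $\Spinc$ setting, proving each inclusion separately. The easy direction $\im(\m T)+\im(\MTC)\subseteq\ker\AC$ is geometric. For a class $[(M,f)]\in\Omega_*^{\Spinc}\BPSp(3)$, the fiber-scaling argument from the preceding lemma equips $M\times_f\HP^2$ with an honest positive scalar curvature metric, so $\AC$ vanishes on $\m T[M,f]$ by Theorem \ref{BR1}. For $[(M,L,f)]\in\Omega_*^{\Spinc}\BSU(3)$, Hattori's calculation $\AC[(\CP^2,L_0)]=0$ together with a fiberwise version of the same scaling argument (using the fact that $\SU(3)$ acts on $(\CP^2,L_0)$ preserving both the metric and the $\Spinc$ structure) produces data $(g,h,A_L)$ with positive $L$-twisted scalar curvature on $M\times_f\CP^2$; another application of Theorem \ref{BR1} gives $\MTC[M,L,f]\in\ker\AC$.

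For the hard inclusion $\ker\AC\subseteq\im(\m T)+\im(\MTC)$, I would lift the problem to stable homotopy. The transfers and the index are induced on $\pi_*$ by maps of spectra
\[\Dc\colon\MSpinc\to\ku,\quad \mT\colon\MSpinc\w\Sigma^8\BPSp(3)_+\to\MSpinc,\quad \mTc\colon\MSpinc\w\Sigma^4\BSU(3)_+\to\MSpinc.\]
A standard argument parallel to the one in \cite{stolz} promotes the vanishing established above to nullhomotopies $\Dc\circ\mT\simeq *$ and $\Dc\circ\mTc\simeq *$, so both transfers factor through the homotopy fiber $\widehat\MSpinc\to\MSpinc$ of $\Dc$, producing lifts $\widehat T$ and $\widehat{T^c}$. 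The theorem is then equivalent to showing that the wedge map
\[\widehat T\vee\widehat{T^c}\colon(\MSpinc\w\Sigma^8\BPSp(3)_+)\vee(\MSpinc\w\Sigma^4\BSU(3)_+)\longrightarrow\widehat\MSpinc\]
induces a surjection on homotopy groups.

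For the surjectivity step I would invoke the $2$-local Anderson-Brown-Peterson style splitting of $\MSpinc$ as a wedge of suspensions of $\ku$ and of mod-$2$ Eilenberg-MacLane spectra $H\Z_2$. With respect to such a splitting, $\Dc$ is projection onto the bottom $\ku$ summand, so $\widehat\MSpinc$ is (up to a correction in low degrees) the wedge of the remaining summands. It then suffices to compute the induced map $(\widehat T\vee\widehat{T^c})^*$ on mod-$2$ cohomology and show that its image contains a full set of $\m A$-module generators for $H^*(\widehat\MSpinc;\Z_2)$, where $\m A$ denotes the mod-$2$ Steenrod algebra. This is done via Thom isomorphisms and the Gysin formula for each transfer, using the explicit description of $H^*(\BPSp(3);\Z_2)$ in terms of Pontryagin classes and $H^*(\BSU(3);\Z_2)$ in terms of Chern classes, together with the known Steenrod action on these rings.

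The main obstacle, and the principal new content beyond Stolz's work, will be verifying that the two transfers between them cover every $\m A$-module generator of $H^*(\widehat\MSpinc;\Z_2)$. The $\BPSp(3)$-transfer handles precisely those ABP summands already visible in $\MSpin$, which remain accounted for here by $\widehat T$. The extra summands appearing in the $\Spinc$ splitting reflect the fact that $\ku$ carries more data than $\ko$, and Theorem \ref{mymain} asserts that these new generators are hit by the $\CP^2$-transfer $\widehat{T^c}$. Checking this cleanly, likely by an explicit low-degree computation combined with an inductive argument on the ABP generators and guided by Hattori's characteristic-number description of $\AC$, is the technical heart of the proof and the step most likely to require genuinely new input beyond the $\Spin$ techniques.
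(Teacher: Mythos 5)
Your overall architecture matches the paper's: the containment $\im(\m T)+\im(\MTC)\subseteq\ker\AC$ is handled geometrically via the fiberwise psc/twisted-psc metric and the Lichnerowicz formula, the hard containment is reformulated as surjectivity of $\widehat T\vee\widehat\TC$ onto $\pi_*\widehat\MSpinc$ (this is exactly Theorem \ref{mainsurj}), and the final computation does come down to checking that the two transfers between them detect all the relevant generators, with the $\HP^2$-transfer covering the primitives already visible in $H^*\BSpin$ and the $\CP^2$-transfer covering the new ones ($y_6$ and $y_{2^k}$, the primitives built from $w_2$). That division of labor is correct and is precisely Lemma \ref{37}.

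The genuine gap is in the step ``it then suffices to compute $(\widehat T\vee\widehat\TC)^*$ on mod-$2$ cohomology and show that its image contains a full set of $\m A$-module generators of $H^*\widehat\MSpinc$.'' Hitting all $\m A$-module generators (equivalently, surjecting onto indecomposables in homology) does not by itself yield a surjection on homotopy groups. What is actually needed is that the induced map be a \emph{split} surjection of $\m A_*$-comodules (dually, a split injection of $\m A$-modules): $\Ext_{\m A_*}(\Z_2,-)$ is not right exact, so an unsplit surjection of comodules need not surject on Adams $E_2$-terms, and even a surjection on $E_2$ only descends to $E_\infty$ because the source spectral sequence happens to have no differentials. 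The paper supplies the missing splitting in two stages that your sketch omits: first the change-of-rings isomorphism $H_*Y\cong\m A_*\square_{E(1)_*}\ol{H_*Y}$ for $\MSpinc$-module spectra (Lemma \ref{reducedc}), which reduces everything to $E(1)_*$-comodules, and second an explicit structural criterion (the analogue of Stolz's Lemma 7.6, Theorem \ref{2cond}/\ref{123}) guaranteeing that an injection between $E(1)$-modules of the specific isomorphism types occurring here --- which is why the paper must decompose $H^*\BPSp(3)$ and $H^*\BSU(3)$ into sums of $\Z_2$, $L$, $C$, and free pieces --- is automatically split once it is injective on the appropriate Margolis homology. Your appeal to the Anderson--Brown--Peterson-type wedge splitting of $\MSpinc$ does not substitute for this: even granting that splitting, a map into a $\ku$ wedge summand that is nonzero on the bottom cohomology class need not be surjective on homotopy, so you would still need the comodule-level splitting (or an equivalent $\K$-theoretic argument) to close the loop.
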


\section{Proof of main result (theorem \ref{mymain})}

We summarize and expand on Stolz's original proof and give an analogous result for $\Spinc$ bordism involving the transfer map $\mTc\colon\Omega_{*-4}^\Spinc\BSU(3)\to\Omega_*^\Spinc$ of (\ref{mtc}). The Steenrod algebra $\m A$ and basic theory of modules and comodules over $\m A$ (and its dual) are needed to describe the transfer maps, so we begin with some algebraic preliminaries in chapter $2$. In chapter $3$ we give more details about the transfer maps $\m T$ and $\mTc$ and compute their induced maps on cohomology. Important lemmas and other results used in the proof of theorem \ref{mymain} appear in chapter $3$, with some necessary computations included in the appendices.

The compact Lie group $\PSp(3)$ acts transitively on $S^{11}\subset\H^3$ and this descends to a transitive action on $\HP^2$. The stabilizer of $[0:0:1]$ is $\PSp(2,1)\coloneqq \op{P}(\Sp(2)\times\Sp(1))$, giving a fiber bundle $\PSp(2,1)\to\PSp(3)\to\HP^2$. In turn this yields a bundle $\pi\colon\BPSp(2,1)\to\BPSp(3)$ with fiber $\HP^2$ and structure group $\PSp(3)$. Note that the standard metric on $\HP^2$ has positive scalar curvature and the structure group $\PSp(3)$ acts via isometries with respect to the standard metric.

Using observations in the last paragraph we obtain the transfer map of Stolz $\m T\colon\Omega_{n-8}^\Spin\BPSp(3)\to\Omega_n^\Spin$ as follows: a class in $\Omega_n^\Spin\BPSp(3)$ can be written $[M,f]$, where $M$ is an $n$-dimensional $\Spin$ manifold and $f$ is a map $M\to\BPSp(3)$. Let $\widehat M$ be the total space of the pullback via $f$ of our $\HP^2$-bundle. Now $[\widehat M]$ is a class in $\Omega_{n+8}^\Spin$, and the transfer map $\m T$ takes $[M,f]$ to $[\widehat M]$. Now the natural inclusion $\MSpin\to\MSpinc$ lets us easily translate this to $\Spinc$-bordism, although we abuse notation slightly and reuse the map names, e.g. $\m T\colon\Omega_{n-8}^\Spinc\BPSp(3)\to\Omega_n^\Spinc$. This notation is different from that of Stolz, who writes $\Psi$ instead of $\m T$.

Analogously to the quaternionic case, the special unitary group $\SU(3)$ acts transitively on $S^5\subset\C^3$ yielding a transitive action on $\CP^2$. As before, the stabilizer of $[0:0:1]$ is $\SU(2,1)\coloneq\op{S}(\U(2)\times\U(1))$ and we hence obtain a bundle $\BSU(2,1)\to\BSU(3)$ with fiber $\CP^2$. This bundle admits a $\Spinc$ structure, giving a map $\BSU(3)\to\BSpinc$. In this case we get a transfer map $\mT\colon\Omega_{n-4}^\Spinc\BSU(3)\to\Omega_n^\Spinc$ with the property $\im \TC\subseteq \ker\AC$. Unfortunately, the analogy with the $\Spin$ case ends here, as the reverse containment fails. Moreover, we will see that neither $\m T$ nor $\TC$ are surjective onto $\ker\AC$; instead we combine the two transfers and show $\im\m T+\im\TC=\ker\AC$.

Via the classical Pontrjagin-Thom construction $\Omega_n^\Spinc(X)$ can be identified with $\pi_n(\MSpinc\w X_+)$. The transfer map $\m T$ is the map on homotopy groups induced by a map of spectra $T\colon\MSpinc\w\Sigma^8\BPSp(3)_+\to\MSpinc$; similarly $\MTC$ is induced by a map $\TC\colon\MSpinc\w\Sigma^4\BSU(3)\to\MSpinc$. The $\AC$ invariant corresponds to a spectrum map $\DC\colon\MSpinc\to\ku$, and we have the following diagram of maps, where $i\colon\widehat{\MSpinc}\to \MSpinc$ is inclusion of the homotopy fiber of $\DC$ and $\mu\colon\MSpinc\w\MSpinc\to\MSpinc$ is the ring spectrum product map.



\begin{figure}[h!]\centering
	\begin{tikzcd}
		& & \widehat{\MSpinc}\ar[d,"i"] &\\
		\MSpinc\w\left(\Sigma^8\BPSp(3)_+\vee\Sigma^4\BSU(3)_+\right)\ar[r]\ar[urr,"\widehat T\vee\widehat \TC"]\ar[rr,"T\vee \TC" below,bend right=15] & \MSpinc\w\MSpinc\ar[r,"\mu"] & \MSpinc\ar[d,"\DC"]\\
		&&\ku
	\end{tikzcd}
\end{figure}
The unlabeled map is $1\w(t\vee \tc)$, where $t$ and $\tc$ are the respective bundle transfer maps composed with the Thom map. That is,
\begin{itemize}
  \item $t\colon\Sigma^8\BPSp(3)_+\to\MSpinc$ is the composition $t=\m M\circ\m T$;
  \item $\m T\colon\Sigma^8\BPSp(3)_+\to M(-\tau)$ is the bundle transfer map of \cite[\S VI.6]{board},
  \begin{itemize}
  \item[$\bullet$] where $-\tau$ is the stable complement of the bundle of tangent vectors along the fibers of $\BPSp(2,1)\to\BPSp(3)$;
\end{itemize}
  \item $\m M\colon M(-\tau)\to\MSpinc$ is the Thom map induced by the classifying map of $-\tau$.
\end{itemize}
Our main result can now be restated as follows:

\begin{theorem}\label{mainsurj}
	On homotopy groups, $T\vee \TC$ induces a surjection
	\[\Omega_*^\Spinc\left(\Sigma^8\BPSp(3)_+\vee\Sigma^4\BSU(3)_+\right)\to\pi_*\widehat\MSpinc.\]
\end{theorem}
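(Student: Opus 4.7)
The plan is to transport Stolz's strategy from the proof of Theorem \ref{stz} to the $\Spinc$ setting, working $2$-locally throughout. First, I would invoke the Anderson--Brown--Peterson style splitting of $\MSpinc$: at the prime $2$, $\MSpinc$ is equivalent to a wedge of suspensions of $\ku$, of $\ku$-module spectra of the form $\ku\langle n\rangle$, and of mod-$2$ Eilenberg--MacLane spectra. The index map $\DC\colon\MSpinc\to\ku$ corresponds to projection onto the distinguished $\ku$-summand, so $\widehat\MSpinc$ decomposes as the wedge of the remaining summands. To prove the theorem it then suffices to show that $\widehat T\vee\widehat\TC$ surjects onto $\pi_*$ of each of these pieces.

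Next, I would reformulate the problem in $\F_2$-cohomology via the Adams spectral sequence. For a map of connective, finite-type, $2$-local spectra, a standard criterion says that surjectivity on homotopy groups follows from surjectivity on $\m A$-module indecomposables in $H^*(-;\F_2)$, with remaining obstructions controlled by the module structure and connectivity. Using the above splitting, $H^*(\widehat\MSpinc;\F_2)$ is a direct sum of free $\m A$-modules (from the $H\F_2$-summands) and cyclic $\m A$-modules of the form $\m A\otimes_{E(Q_0,Q_1)}\F_2$ coming from the $\ku$-type summands, giving a concrete finite-in-each-degree list of $\m A$-generators that must be hit.

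The computational heart is then the analysis of the cohomology maps $T^*$ and $(\TC)^*$. For $T$, I would reuse Stolz's computation: $H^*(\BPSp(3);\F_2)$ is polynomial on generators in degrees $4,8,12$, and the bundle transfer of \cite{board} expresses $T^*$ in terms of characteristic classes of the universal $\HP^2$-bundle. Via the inclusion $\MSpin\to\MSpinc$ this hits exactly those $\m A$-module generators of $H^*(\widehat\MSpinc;\F_2)$ descending from $H^*(\widehat\MSpin;\F_2)$. For $\TC$, I would carry out the parallel computation using $H^*(\BSU(3);\F_2)$, which is polynomial on the Chern classes $c_2,c_3$; since the tangent bundle along the fibers of $\BSU(2,1)\to\BSU(3)$ is naturally $\Spinc$ rather than $\Spin$, Chern classes enter the transfer formula, and this is precisely what enables $\TC$ to hit the summands of $\widehat\MSpinc$ that do not originate from $\MSpin$.

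The main obstacle is the last matching step: identifying the full set of $\m A$-module indecomposables in $H^*(\widehat\MSpinc;\F_2)$ and verifying that the images of $T^*$ and $(\TC)^*$ together span them. This requires explicit bookkeeping with Steenrod operations on Chern and Pontryagin classes (via Wu-type formulas) together with careful tracking of how the ABP summands are realized inside $\MSpinc$; it is where the genuinely new content beyond Stolz's paper lives. Once this generator-level surjectivity is established, standard Adams spectral sequence vanishing arguments, leveraging connectivity and finite type, promote it to surjectivity on homotopy groups and yield the theorem.
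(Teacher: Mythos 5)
Your high-level strategy---reduce to the Adams spectral sequence and then to a cohomological computation with the two bundle transfers---is the same as the paper's, but as written the proposal has three concrete gaps. First, a factual error that would derail the computation: $H^*(\BPSp(3);\F_2)$ is not polynomial on generators in degrees $4,8,12$ (that is $H^*\BSp(3)$); by Kono's computation it is $\Z_2[t_2,t_3,t_8,t_{12}]$ with generators in degrees $2,3,8,12$, and the low-degree classes $t_2,t_3$ (with $\Sq^1t_2=t_3$, $\Sq^2t_2=t_2^2$, etc.) are precisely what allow the $\HP^2$-transfer to reach the low-degree indecomposables of $H_*\MSpinc$. Second, the ``standard criterion'' you invoke---that surjectivity on $\m A$-module indecomposables, plus connectivity, yields surjectivity on homotopy---is not a theorem and fails in general: a surjection of $\m A_*$-comodules need not induce a surjection on $\Ext_{\m A_*}(\Z_2,-)$, so it need not surject on $E_2$-terms. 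The paper's actual mechanism is (i) a change-of-rings (Theorem \ref{reducedc}) identifying $H_*Y\cong\m A_*\square_{E(1)_*}\ol{H_*Y}$ for $\MSpinc$-module spectra, reducing everything to $E(1)$-modules; (ii) explicit decompositions of $H^*\BPSp(3)$ and $H^*\BSU(3)$ into a short list of indecomposable $E(1)$-module types; and (iii) a splitting criterion (Theorem \ref{2cond} and its $E(1)$-analogue) guaranteeing that the relevant injection is \emph{split}, which is what produces the surjection of $\Ext$ groups; one then still needs the collapse of the source spectral sequence to pass to $E_\infty$. Your proposal gestures at these steps but omits the two lemmas that carry the whole argument.

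Third, you never isolate the degree restrictions that make the generator-matching possible: $Q\ol{H_n\MSpinc}=0$ for $n<4$ and $n=2^k\pm1$ (Lemma \ref{q}), and it is exactly in the complementary degrees $n\ge 4$, $n\neq 2^k\pm1$ that the primitives of $H^*\BSpinc$ must, and do, inject into $H^*(\Sigma^8\BPSp(3)_+\vee\Sigma^4\BSU(3)_+)$ (Lemmas \ref{25} and \ref{37}); the classes not reachable by the transfers are precisely those killed in the $R$-indecomposable quotient. Without this bookkeeping the matching step you describe cannot close. Your ABP-style summand-by-summand analysis of $\widehat\MSpinc$ is a legitimate alternative organization (the paper instead works directly with $\ker\ol D_*$ inside $\ol{H_*\MSpinc}$), but that difference is cosmetic next to the gaps above.
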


\begin{remark}
	After localizing at $2$, the spectrum $\MSpinc$ splits into a wedge of suspensions of $\ku$ with other spectra. The map $\DC$ is simply projection onto the lowest degree copy of $\ku$. Details can be found in \cite[\S 5]{hophov}. The analogous statements hold for $\MSpin$ and $\ko$. These facts give a particularly nice description of $\widehat \MSpinc$ and $\widehat \MSpin$ which make these homotopy fibers reasonable objects to work with.
\end{remark}

To prove theorem \ref{mainsurj} (at the prime $2$), we use the Adams spectral sequence along with homological data. More precisely, we will show that $\widehat T\vee\widehat\TC$ induces a split surjection of $\m A_*$-comodules, hence a surjection of the $E_2$-terms
\begin{equation}\label{e2surj}
	\Ext_{\m A_*}^{s,t}(\Z_2,H_*\MSpinc\T(H_{*-8}\BPSp(3)\oplus H_{*-4}\BSU(3)))\to
		\Ext_{\m A_*}^{s,t}(\Z_2,H_*\widehat\MSpinc)
\end{equation}

\noindent which converge to
\begin{equation}\label{einfsurj}
	\pi_*(\MSpinc\w(\Sigma^8\BPSp(3)_+\vee\Sigma^4\BSU(3)_+))\T\Z_{(2)}\to \pi_*(\widehat\MSpinc)\T\Z_{(2)}
\end{equation}

It turns out that the first of these spectral sequences has no nontrivial differentials, hence the surjection (\ref{e2surj}) at the level of $E_2$ pages implies the surjection (\ref{einfsurj}) on the $E_\infty$ pages.

To prove theorem \ref{mymain}, then, it remains to show $\widehat T_*\vee\widehat\TC_*$ is a split surjection of $\m A_*$-comodules. Several algebraic maneuvers make our task easier: a useful change-of-rings construction reduces our task to working with comodules over a sub-coalgebra $E(1)_*\subset \m A_*$. We also prefer to dualize to cohomology in order to work with $\m A$-modules and $E(1)$-modules rather than comodules, although most results are stated in terms of comodules where possible. Finally, we give an algebraic result in Theorem \ref{123} which provides an easy way to show that an injection of $E(1)$-modules (or a surjection of $E(1)_*$-comodules) is split. Following \cite{pet}, we also know
\[H_*\BSpinc=\Z_2[x_i^2,x_j:\A(i)<3,\A(j)\ge 3], \]
where $\A(n)$ is the number of nonzero terms in the base-$2$ expansion of $n$.

\begin{theorem}\label{square}
	We have a commuting diagram as shown, where $\rho\colon \BSpinc\to\BO$ is the projection and $D$, $U$ and $\I$ are orientation classes. We make some observations about this diagram.
	\begin{figure}[h!]\centering
	\begin{tikzcd}
		\MSpinc\ar[r,"D"]\ar[d,"M\!\rho"] & \ku\ar[d,"\I"]\\
		\MO\ar[r,"U"]& H\Z_2
	\end{tikzcd}
	\end{figure}
We have the standard identification $H_*H\Z_2=\Z_2[\zeta_1,\zeta_2,\ldots]$, and, via the Thom isomorphism, we can write $H_*\MO=\Z_2[x_1,x_2,\ldots]$, where $\deg(x_i)=i$. The spectrum $\MO$ is $H\Z_2$-oriented via a Thom class (i.e., an orientation) $\m U\colon \MO\to H\Z_2$. Similarly $\DC\colon \MSpinc\to\ku$ can be considered as the $\ku$-orientation of $\MSpinc$. We use the Thom isomorphism to identify $H_*\MO$ with $H_*\BO=\Z_2[x_1,x_2,x_3,\ldots]$, where $\deg(x_i)=i$. The dual Steenrod algebra $\m A_*$ is the polynomial ring $\Z_2[\xi_1,\xi_2,\ldots]$, although we prefer to use the Hopf conjugate generators and write $\m A_*=\Z_2[\zeta_1,\zeta_2,\ldots]$ (here $\deg(\xi_i)=\deg(\zeta_i)=2^i-1$). Of course $\m A_*=H_*H\Z_2$, and the inclusion $\ko\to H\Z_2$ induces on homology the injection
\[\Z_2[\zeta_1^2,\zeta_2^2,\zeta_3,\zeta_4,\ldots]\to\Z_2[\zeta_1,\zeta_2,\zeta_3,\zeta_4,\ldots].\]
	\begin{enumerate}
	  \item The maps $M\!\rho$ and $\I$ induce monomorphisms on homology.
	  \item We can identify $H_*\MSpinc=\im(M\! \rho_*)=\Z_2[x_i^2,x_j:\A(i)<3,\A(j)\ge 3]$.
	  \item Define $P\subset H_*\MO$ as $P=\Z_2[x_{2^i-1}:i\ge 1]$. Then $U_*$ maps $P$ isomorphically onto $H_*H\Z_2$.
	  \item Identifying $P\subset H_*\MSpinc$, let
	  \[R=P\cap H_*\MSpinc=\Z_2[x_1^2,x_3^2,x_7,x_{15},\ldots]\subset H_*\MSpinc.\] Then $D_*$ maps $R$ isomorphically onto $H_*\ku$.
	\end{enumerate}
\end{theorem}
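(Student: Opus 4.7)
The plan is to dispatch the four claims in order: Peterson's computation of $H_*\BSpinc$ combined with the multiplicative Thom isomorphism handles (2) and half of (1); Milnor's identification of $H_*\ku$ inside $\m A_*$ handles the other half of (1); Thom's splitting of $\MO$ handles (3); and a diagram chase finishes (4).

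For (2) and the first half of (1), I appeal to Peterson \cite{pet}, whose result realizes $H_*\BSpinc$ as the polynomial subring of $H_*\BO=\Z_2[x_1,x_2,\ldots]$ generated by $x_i^2$ for $\A(i)<3$ and $x_j$ for $\A(j)\ge 3$. Because the Thom isomorphism is natural and multiplicative, it identifies $M\!\rho_*$ with this subring inclusion, yielding (2) and injectivity of $M\!\rho_*$. For the other half of (1), Milnor's formula $H^*\ku=\m A/\!/E(1)$ dually presents $H_*\ku$ as the polynomial subring $\Z_2[\zeta_1^2,\zeta_2^2,\zeta_3,\zeta_4,\ldots]\subset\m A_*$ (this can also be extracted from the $2$-local splitting of $\ku$ as a wedge of suspensions of $H\Z_2$ with other spectra, $\I$ being projection onto the degree-zero summand), so $\I_*$ is injective.

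The crux is (3). Thom's theorem splits $\MO$ as a wedge of suspensions of $H\Z_2$; fix a splitting so that $U$ is projection onto the Thom-class summand, whence $H_*\MO\cong\m A_*\otimes\pi_*\MO$ as $\m A_*$-comodules and $U_*$ becomes projection onto the $\m A_*$ factor. What remains is a leading-term identification $U_*(x_{2^i-1})\equiv\zeta_i$ modulo decomposables; given this, $U_*|_P$ is a graded map between two polynomial algebras each having a single algebra generator in each degree $2^i-1$, sending a minimal generating set to a minimal generating set, hence an isomorphism. This leading-term computation is the one genuine calculation, and it is the step I expect to be the main obstacle: $x_{2^i-1}$ is not primitive in $H_*\BO$, so its image must be controlled through the Thom diagonal. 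The cleanest route is to dualize and observe that $U^*$ sends the Milnor cohomology basis to explicit Stiefel-Whitney polynomials under the Thom isomorphism, after which degree counting pins down the algebra generator in degree $2^i-1$.

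Claim (4) then follows by a diagram chase on $R=P\cap H_*\MSpinc$. The identity $\I_*\circ D_*=U_*\circ M\!\rho_*$ restricts on $R$ (where $M\!\rho_*$ is the inclusion) to $\I_*\circ D_*|_R=U_*|_R$. By (3) applied to the generators $x_1^2,x_3^2,x_7,x_{15},\ldots$ of $R$, the right-hand side has image $\Z_2[\zeta_1^2,\zeta_2^2,\zeta_3,\zeta_4,\ldots]$, which is exactly $\I_*(H_*\ku)$. Since $\I_*$ is injective, this upgrades to $D_*(R)=H_*\ku$, and injectivity of $D_*|_R$ is inherited from injectivity of $U_*|_P\supseteq U_*|_R$.
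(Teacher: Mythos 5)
Your proposal is correct in substance, and for part (4) it takes a genuinely different route from the paper. The paper disposes of the four claims essentially by citation: (1) holds because the dual maps on cohomology are quotients, (2) is the Peterson/Switzer computation, (3) is quoted from Stolz (Corollary 4.7), and (4) is deduced from the nontrivial fact that $D_*$ admits a splitting $H_*\ku\to H_*\MSpinc$ (Hattori, Stolz). You instead derive (4) formally from (3) together with commutativity of the square: injectivity of $D_*|_R$ follows from $\I_*\circ D_*|_R=U_*|_R$ and injectivity of $U_*|_P$, and surjectivity from a comparison of Poincar\'e series. This buys independence from the splitting of $D_*$, at the cost of actually having to prove (3); your sketch of the leading-term identification $U_*(x_{2^i-1})\equiv\zeta_i$ modulo decomposables is the classical Thom-class computation and is exactly what underlies Stolz's Corollary 4.7, so nothing is circular.

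One step in your (4) is stated too quickly. Claim (3) alone does not give $U_*(R)=\Z_2[\zeta_1^2,\zeta_2^2,\zeta_3,\ldots]$, because $U_*(x_{2^i-1})$ is only $\zeta_i$ plus decomposables, and (for instance) the degree-$7$ decomposables $\zeta_1\zeta_2^2$ and $\zeta_1^4\zeta_2$ do not lie in $\Z_2[\zeta_1^2,\zeta_2^2,\zeta_3,\ldots]$, so a priori $U_*(x_7)$ need not land there. The containment $U_*(R)\subseteq\I_*(H_*\ku)$ must instead come from the identity $U_*\circ M\!\rho_*=\I_*\circ D_*$ (which you do record), and the reverse containment then follows because $U_*|_R$ is injective and $R$ and $H_*\ku$ are polynomial algebras on generators in the same degrees $2,6,7,15,\ldots$, hence have equal Poincar\'e series. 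With that small repair the argument is complete.
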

\begin{proof}
	The first claim is immediate from the fact that the corresponding maps on cohomology are quotients (see \cite{stong}, for example). The computation of $H_*\MSpinc$ and $H^*\MSpinc$ can be found in \cite{switzer}. The third claim is \cite[Corollary $4.7$]{stolz}, and the last follows from the nontrivial fact that $D_*$ admits a splitting $H_*\ku\to H_*\MSpinc$ (\cite{hat}, \cite{stolz2}).
\end{proof}


There is a useful functorial construction for $\MSpinc$-module spectra which identifies $H_*X$ with $\m A_*\square_{E(1)_*}\ol{H_*X}$, where $\ol{H_*X}\coloneqq\Z_2\T_R H_*X$ is a subalgebra of $H_*\MSpinc$ which maps isomorphically to $H_*\ku$ via $\DC_*$. The algebra $E(1)$ is generated by $Q_0=\Sq^1$ and $Q_1=\Sq^1\Sq^2+\Sq^2\Sq^1$; the cotensor product $\square$ is also standard notation and also defined in chapter $2$. (Note that $R=\m A_*\square_{E(1)_*}\Z_2$.) This construction, which is detailed in theorem \ref{reducedc}, is functorial in the sense that maps $f\colon H_*X\to H_*Y$ induce maps $\ol f\colon\ol{H_*X}\to \ol{H_*Y}$. This method allows us to work with modules over $E(1)$ (or comodules over $E(1)_*$) rather than using the whole Steenrod algebra.
\\

Applying this construction to the diagram which appeared just before theorem \ref{mainsurj} yields the following diagram.
\pagebreak

\begin{figure}[h!]\centering
	\begin{tikzcd}
		\ol{H_*\MSpinc}\T\left(H_*\Sigma^8\BPSp(3)_+\oplus H_*\Sigma^4\BSU(3)_+\right)\ar[d,"1\T\left(t_*\oplus \tc_*\right)"]\\
		\ol{H_*\MSpinc}\T H_*\MSpinc \ar[d,"\mu"]\\
		\ol{H_*\MSpinc}\ar[d,"\ol D_*"]\\
		\ol{H_*\ku}
	\end{tikzcd}
\end{figure}

An important observation is that $\ol{H_*\ku}=\Z_2$, and we can identify $\ol D_*$ with the augmentation map of $\ol{H_*\MSpinc}$. We can summarize these maps in more compact notation by writing $H_1=H_*\Sigma^8\BPSp(3)_+$ and $H_2=H_*\Sigma^8\BSU(3)_+$.

\begin{figure}[h!]\centering
	\begin{tikzcd}
		\ol{H_*\MSpinc}\T\left(H_1\oplus H_2\right) \ar[r] \ar[rr,"\ol T_*", bend left=15] & \ol{H_*\MSpinc}\T H_*\MSpinc \ar[r,"\mu"]\ar[d,"1\T p"] & \ol{H_*\MSpinc}\ar[r,"\ol D_*"] \ar[d,equal] & \ol{H_*\ku}\\
		& \ol{H_*\MSpinc}\T\ol{H_*\MSpinc} \ar[r,"m"] & \ol{H_*\MSpinc}
	\end{tikzcd}
\end{figure}

Recall that $\ol M=\Z_2\T_R M$, and here $p\colon M\to\ol M$ is $x\mapsto 1\T x$. In these terms $\mu$ is given by $(1\T x)\T y\mapsto 1\T xy$, while $m\circ(1\T p)$ is $(1\T x)\T y\mapsto (1\T x)\T(1\T y)\mapsto 1\T xy$. It follows that the diagram commutes and $\ol T_*$ is equal to the composition
\[\ol{H_*\MSpinc}\T\left(H_1\oplus H_2\right)\to \ol{H_*\MSpinc}\T\ol{H_*\MSpinc}\xrightarrow{m} \ol{H_*\MSpinc}. \]

\begin{lemma}\label{25}
	To show that $T_*$ is surjective onto $\ker D_*$, it suffices to show
	\[H_n\left(\Sigma^8\BPSp(3)_+\vee \Sigma^4BSU(3)_+\right)\to QH_n\MSpinc\]
	is surjective for $n\ge 4$ with $n\neq 2^k\pm 1$, where $QH_n\MSpinc$ is generated by indecomposable elements in $H_n\MSpinc$
\end{lemma}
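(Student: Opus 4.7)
The plan is a Nakayama-style argument reducing the desired surjectivity to a statement about algebra indecomposables, followed by a degree-by-degree enumeration using theorem \ref{square} and the $\alpha$-invariant.

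First, from the diagram just preceding the lemma, $\ol T_*$ factors as $m\circ(1\T(t_*\oplus\tc_*))$, so $\im\ol T_*$ is the $\ol{H_*\MSpinc}$-ideal generated by $V\coloneq\im(t_*\oplus\tc_*)$. Since $t$ and $\tc$ each factor through the homotopy fiber $\widehat\MSpinc$ of $\DC$, $V$ already lies inside $\ker\ol D_*$, which the same diagram identifies with the augmentation ideal of $\ol{H_*\MSpinc}$. By the standard fact that an ideal $I\subseteq A_+$ of a connected graded commutative algebra $A$ equals $A_+$ if and only if $I\hookrightarrow A_+\twoheadrightarrow QA$ is surjective in each positive degree, it suffices to show that $V\to Q\ol{H_*\MSpinc}|_n$ is surjective for every $n>0$.

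Next I would use theorem \ref{square} to write $H_*\MSpinc = R\T_{\Z_2}\ol{H_*\MSpinc}$ as a tensor product of polynomial algebras, where $R=\Z_2[x_1^2,x_3^2,x_7,x_{15},\ldots]$. This produces a split surjection $QH_n\MSpinc\twoheadrightarrow Q\ol{H_*\MSpinc}|_n$ for every $n$, so a surjection of $V$ onto $QH_n\MSpinc$ induces one onto $Q\ol{H_*\MSpinc}|_n$ by projection. It therefore suffices to verify surjection onto $QH_n\MSpinc$ only in those degrees where $Q\ol{H_*\MSpinc}|_n\ne 0$.

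Finally, a case analysis using the $\alpha$-invariant identifies the degrees in which $Q\ol{H_*\MSpinc}|_n$ vanishes. The polynomial generators of $\ol{H_*\MSpinc}$ are the $x_i^2$ with $\alpha(i)<3$ and $i\ne 1,3$, together with the $x_j$ with $\alpha(j)\ge 3$ and $j\ne 2^k-1$. Using this one verifies directly: for $n=2^k+1$ (with $k\ge 2$) we have $\alpha(n)=2$ while $n$ is odd, so no polynomial generator exists in degree $n$; for $n=2^k-1$ either $k\le 2$ (no generator) or $k\ge 3$ (the only generator $x_n$ lies in $R$); and no indecomposable exists in degrees $\le 3$. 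Hence $Q\ol{H_*\MSpinc}|_n = 0$ for all $n\le 3$ and all $n=2^k\pm 1$, so the stated hypothesis of the lemma suffices. The main obstacle is this final enumeration, but once the Nakayama-style reduction is in place it amounts to an elementary exercise with binary expansions.
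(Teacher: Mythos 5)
Your proof is correct and follows essentially the same route as the paper: a graded Nakayama-type reduction to surjectivity onto $Q\ol{H_*\MSpinc}$, followed by the observation that $QH_n\MSpinc\to Q\ol{H_n\MSpinc}$ is onto and that $Q\ol{H_n\MSpinc}$ vanishes in the excluded degrees. The only cosmetic differences are that you make the Nakayama step explicit (the paper phrases it as ``$\ker\ol D_*$ is generated by elements with an indecomposable factor'') and you carry out the degree enumeration inline rather than deferring it to Lemma \ref{q}, handling the degrees $2^k+1$ directly from the homology generators instead of via the presentation of $H^*\BSpinc$.
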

\begin{proof}
First we claim that it suffices to show
\[H_*\left(\Sigma^8 \BPSp(3)_+\vee\Sigma^4\BSU(3)_+\right)\to Q\ol{H_*\MSpinc}\]
is surjective. This amounts to the obvious fact that $\ker \ol D_*$ is generated (over $\Z_2$) by elements in $\ol{H_*\MSpinc}$ with at least one indecomposable factor. These generators are of the form $\mu(x\T y)$ for some $x\in\ol{H_*\MSpinc}$ and $y\in Q\ol{H_*\MSpin}$ (note that $x$ may be $1$ here). Provided $y$ is in the image of $t_*$, $\mu(x\T y)$ is in the image of $\ol T_*$. The result now follows from Lemma \ref{q}.
\end{proof}

\begin{lemma}\label{q}
	The projection $p\colon H_*\MSpinc\to \ol{H_*\MSpinc}$ induces a homomorphism $Qp\colon QH_*\MSpinc\to Q\ol{H_*\MSpinc}$ such that $Qp\colon QH_n\MSpinc\to Q\ol{H_n\MSpinc}$ is an isomorphism of abelian groups for $n\ge 4$ with $n\neq 2^k\pm 1$. Further, $Q\ol{H_n\MSpinc}=0$ for $n<4$ or $n=2^k\pm 1$.
\end{lemma}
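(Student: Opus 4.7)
The plan is to compute the kernel of $Qp$ and the graded pieces of $Q\ol{H_*\MSpinc}$ directly from the polynomial presentations $A := H_*\MSpinc = \Z_2[x_i^2, x_j : \A(i)<3,\ \A(j)\ge 3]$ and $R = \Z_2[x_1^2,x_3^2,x_7,x_{15},\ldots]$ given in Theorem \ref{square}. Write $\ol A := A/(R_+A)$ so that $p\colon A\to\ol A$ is the projection. Both $A$ and $\ol A$ are connected graded commutative $\Z_2$-algebras and $p$ is a surjective ring map, so $p$ sends decomposables to decomposables and induces a surjection $Qp\colon QA\to Q\ol A$.

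A direct computation identifies $\ker Qp = (R_+A + A_+^2)/A_+^2$, which is exactly the image of $R_+$ inside $QA$. Since the polynomial generators $x_1^2,\, x_3^2,\, x_7,\, x_{15},\ldots$ of $R$ are themselves among the polynomial generators of $A$, their classes in $QA$ remain $\Z_2$-linearly independent, so $\ker Qp$ is the $\Z_2$-span of these generators. They are concentrated in degrees $2,\,6,\,7,\,15,\,31,\ldots$

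Next I would enumerate the polynomial generators of $A$ degree by degree: in degree $n$ they are $x_i^2$ with $2i = n$ and $\A(i)<3$, together with $x_n$ when $\A(n)\ge 3$. A short case analysis then finishes the proof:
\begin{itemize}
  \item For $n < 4$, the only candidate generator is $x_1^2\in R_+$, so $Q\ol A_n = 0$.
  \item For $n = 2^k+1$ with $k\ge 2$, $n$ is odd with $\A(n) = 2 < 3$, so there are no degree-$n$ generators and $QA_n = Q\ol A_n = 0$.
  \item For $n = 2^k - 1$ with $k\ge 3$, the sole degree-$n$ generator is $x_{2^k-1}\in R_+$, so $Q\ol A_n = 0$.
  \item For the remaining $n\ge 4$ with $n \ne 2^k\pm 1$, no element of $\{x_1^2, x_3^2, x_7, x_{15},\ldots\}$ lies in degree $n$, so $\ker Qp_n = 0$ and $Qp_n$ is an isomorphism.
\end{itemize}

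The main obstacle is the bookkeeping check in the last bullet: one has to verify that every degree in which a polynomial generator of $R$ appears is captured by the exceptional set $\{n<4\}\cup\{2^k\pm 1\}$. Once that matching is confirmed, no further input beyond the explicit polynomial presentations supplied by Theorem \ref{square} is needed; no homotopy-theoretic or Steenrod-algebra machinery is invoked in the proof.
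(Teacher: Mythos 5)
Your strategy --- identifying $\ker Qp$ with the span of the polynomial generators of $R$ inside $QH_*\MSpinc$ and then matching their degrees against the exceptional set --- is the same enumeration the paper's proof carries out, and your identification $\ker Qp=(R_+A+A_+^2)/A_+^2$ with the image of $R_+$ in $QA$ is correct. But the ``bookkeeping check'' you defer to the end actually fails: the generator $x_3^2$ of $R$ sits in degree $6$, and $6\ge 4$ with $6\neq 2^k\pm 1$, so your last bullet is false. In degree $6$ the unique polynomial generator of $H_*\MSpinc$ is $x_3^2$ (since $\A(3)=\A(6)=2<3$, there is no generator $x_6$), it lies in $R_+$, and therefore $QH_6\MSpinc\cong\Z_2$ while $Q\ol{H_6\MSpinc}=0$: the map $Qp_6$ is surjective but not injective. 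This reflects the fact that $H_*\ku=\Z_2[\xi_1^2,\xi_2^2,\xi_3,\ldots]$ has a polynomial generator in degree $6=2\deg\xi_2$.

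This is not something you can repair by more careful matching: the isomorphism claim of the lemma is itself too strong at $n=6$, and the paper's own proof tacitly concedes the point by listing $x_3^2$ among the generators removed in passing to $\ol{H_*\BSpinc}$. The correct statement moves $n=6$ into the vanishing case, i.e.\ $Q\ol{H_n\MSpinc}=0$ for $n<4$, $n=6$, or $n=2^k\pm 1$, with $Qp_n$ an isomorphism for the remaining $n$. Since the downstream application in Lemma \ref{25} only uses surjectivity of $Qp_n$ together with the vanishing of $Q\ol{H_n\MSpinc}$ in the excluded degrees, keeping $n=6$ among the ``good'' degrees costs nothing there (one merely proves a slightly stronger surjectivity than is needed, and Lemma \ref{37} does treat the degree-$6$ primitive $y_6$ explicitly). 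Apart from this, your treatment of $n=2^k+1$ directly on the homology side --- no generator in odd degree $n$ with $\A(n)=2$ --- is a clean substitute for the paper's appeal to the ring presentation $H^*\BSpinc=\Z_2[w_n:n\ge 2,\,n\neq 2^k+1]$.
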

\begin{proof}
	Recall that $H_*\BSpinc=\Z_2[x_i^2,x_j:\A(i)<3,\A(j)\ge 3]$ and we defined the subring $R=\Z_2[p_1^2,p_2^2,p_3,p_4,\ldots]$. Here $p_i$ is the image of $x_{2^i-1}$ under the Thom isomorphism, so identifying $H_*\MSpin$ with $H_*\BSpin$ (as algebras), we can write $R=\Z_2[x_1^2,x_3^2,x_7,x_{15},\ldots]$. It is now easy to see that $\ol{H_*\BSpinc}$ has generators $x_i^2$ and $x_j$ of $H_*\BSpinc$ except for $x_1^2$, $x_3^2$, and $x_{2^n-1}$ for $n\ge 3$ (since $\A(2^n-1)=n$). This shows $Q\ol{H_n\BSpinc}=0$ for $n<4$ and $n=2^k-1$; to finish the lemma, use the ring identification $H^*\BSpinc=\Z_2[w_n:n\ge 2,n\neq 2^k+1]$ to show $Q\ol{H_n\MSpinc}$ is trivial for all $n=2^k+1$.
\end{proof}

\begin{remark*}
	We pause here to make an important note: as an $\m A$-module, $H^*\BSpinc$ is the quotient of $H^*\BSO$ by the ideal generated by $\Sq^3$ (\cite[\S XI]{stong}). As an algebra, one can show with the Ad\`em relations that $H^*\BSpinc$ is the polynomial ring with generators $w_i$ for all $i\ge 2$ with $i\neq2^k+1$. This identification does not respect the full $\m A$-module structure, however it respects the actions by $Q_0$ and $Q_1$. The analogous fact is true for the $\Spin$ case and the actions by $\Sq^1$ and $\Sq^2$ (\cite{stolz}). An explicit example of how this can cause issues is given in remark \ref{b5}.
\end{remark*}

After dualizing to cohomology, theorem \ref{mymain} now follows from the following lemma.
\begin{lemma}\label{37}
	The map \[PH^n\BSpinc\to H^n(\Sigma^8\BPSp(3)_+\vee\Sigma^4\BSU(3)_+) \]
is injective for $n\ge 4$, $n\neq 2^k\pm 1$.
\end{lemma}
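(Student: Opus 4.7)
The plan is to dualize. Under the $\Z_2$-pairing between $PH^n\MSpinc$ and $QH_n\MSpinc$, the stated injectivity of $(t\vee\tc)^*$ on primitives is equivalent to surjectivity of the homology map
\[H_n(\Sigma^8\BPSp(3)_+\vee\Sigma^4\BSU(3)_+) \longrightarrow QH_n\MSpinc\]
for $n\ge 4$, $n\ne 2^k\pm 1$. By Lemma \ref{q} this target is one-dimensional in each such degree; via the Thom isomorphism its unique generator is the class of $x_{n/2}^2$ when $n$ is even with $\A(n/2)<3$ and $n/2\ne 1,3$, and the class of $x_n$ when $\A(n)\ge 3$ and $n\ne 2^k-1$ (mutually exclusive, since $\A(n)=\A(n/2)$ when $n$ is even). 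It therefore suffices to realize this single generator from one of the two wedge summands.

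I would then split into cases by degree. Since $H^*\BSU(3)=\Z_2[c_2,c_3]$ is concentrated in even degrees, $\tc_*$ can only contribute for $n$ even; the $\BPSp(3)$-side begins at $n=8$ but produces classes in both parities, thanks to the odd-degree classes in $H^*\BPSp(3)$ coming from the fibration $\BSp(3)\to\BPSp(3)\to K(\Z_2,2)$. The cases where Stolz's argument from Theorem \ref{stz} directly applies are (i) even $n\ge 8$ with $\A(n/2)=1$, i.e.\ $n/2=2^k$, giving the ``squared Pontrjagin'' generators $x_{2^k}^2$, and (ii) all $n$ with $\A(n)\ge 3$, $n\ne 2^k-1$, which are also $\Spin$-indecomposables (since $\A(n)\ge 3\ge 2$) and survive as indecomposables in $H_*\MSpinc$ under the inclusion $\MSpin\hookrightarrow\MSpinc$. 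Together these handle every relevant degree except $n=4$ and the even degrees $n\ge 10$ with $\A(n)=2$.

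For those remaining degrees the generator is $x_{n/2}^2$ with $\A(n/2)\in\{1,2\}$ but not of the form $n/2=2^k$ with $k\ge 2$, and I would hit it using $\tc$. Concretely, identify the classifying map $\BSU(3)\to\BSpinc$ of the stable complement $-\tau$ of the tangent-along-fibers bundle of $\BSU(2,1)\to\BSU(3)$, expand its total Stiefel--Whitney class as a polynomial in $c_2,c_3\in H^*\BSU(3)$ via the splitting principle, and use the bundle-transfer formula to compute $\tc^*(w_i)$. Dually, each Chern monomial $c_2^a c_3^b\in H_{n-4}\BSU(3)$ maps to an explicit combination of indecomposables in $QH_n\MSpinc$; one then verifies that $x_{n/2}^2$ appears for each required $n$. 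For $n=4$ specifically, $\tc_*(1)$ is the Thom class of $-\tau$, whose indecomposable part is $x_2^2=w_2^2$, reflecting the standard nontrivial $\Spinc$-structure on $\CP^2$.

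The main obstacle is the combinatorial verification that the $\tc$-image actually contains $x_{n/2}^2$ modulo decomposables in each required even degree with $\A(n)=2$. This is delicate because the Whitney-sum expansions produced by the splitting principle must be reduced modulo the polynomial relations $H^*\BSpinc=\Z_2[w_i:i\ge 2,\ i\ne 2^k+1]$, which is not an $\m A$-module identification. The cleanest route, as hinted in the remark preceding this lemma, is to exploit the Milnor primitives $Q_0$ and $Q_1$---which the polynomial identification does respect---together with the reduction to $E(1)_*$-comodules afforded by the cotensor construction: this converts the problem into a check using the well-documented $Q_i$-actions on Chern (and Pontrjagin) classes of $\SU(3)$- and $\PSp(3)$-bundles.
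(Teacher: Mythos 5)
Your overall framework---dualizing to surjectivity onto indecomposables and splitting cases by $\A(n)$---matches the paper's Lemmas \ref{25} and \ref{q}, but your case division assigns degrees to the wrong transfer maps, and in one family the assignment provably cannot work. Your case (i) claims that for $n=2^{j}$, $j\ge 3$, the generator $x_{2^{j-1}}^{2}$ of $QH_n\MSpinc$ is hit by Stolz's $\HP^2$ transfer. It is not: that transfer factors through $\MSpin$ (the bundle $\BPSp(2,1)\to\BPSp(3)$ is fiberwise $\Spin$), and the degree-$2^{j}$ primitive of $H^*\BSpinc$ is $y_{2^{j}}=w_2^{2^{j-1}}$, which dies under $H^*\BSpinc\to H^*\BSpin$ because $w_2=0$ there; dually, $x_{2^{j-1}}^{2}$ is not in the image of $QH_{2^j}\MSpin\to QH_{2^j}\MSpinc$. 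So the $\HP^2$ component of the map on $PH^{2^j}\BSpinc$ is zero, and these degrees \emph{must} be handled by the $\CP^2$ transfer---which works, since $w_2\mapsto x_2$ under $c(\tau)^*$ and $\pi_!\bigl(x_2^{2^{j-1}}\bigr)=y_4^{2^{j-2}-1}\ne 0$ by Theorem \ref{thm}. This is exactly what the paper does for these degrees (and for $n=4$, where you also correctly use $w_2^2$).

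Conversely, the degrees you route through the $\CP^2$ transfer (even $n\ge 10$ with $\A(n)=2$) are precisely the ones where Stolz's argument applies with no extra work: for $n\ge 8$ with $\A(n)\ge 2$ the $\Spinc$ primitive $y_n$ maps to the $\Spin$ primitive $z_n$ under $H^*\BSpinc\to H^*\BSpin$ (both equal $s_{n/2,n/2}^{2^{k}}$ in these degrees), and Stolz's proposition 7.5 already shows $z_n$ maps injectively into $H^{n-8}\BPSp(3)$. Your plan instead defers these degrees to a ``combinatorial verification'' of the $\CP^2$ transfer on $s_{n/2,n/2}(\tau)$ that you acknowledge not having carried out; since that computation is the entire content of your argument in those degrees, the proof is incomplete there as written. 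In short, the two halves of your case analysis should be swapped: use the $\HP^2$ transfer exactly for $n\ge 8$ with $\A(n)\ge 2$, and the $\CP^2$ transfer for the remaining primitives $w_2^{2^{k-1}}$ (and $y_6$), which is the paper's proof.
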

\begin{proof}
	The proof of \cite[proposition $7.5$]{stolz} given by Stolz easily shows that the primitive generator $y_n$ maps injectively for $n\ge 8$, $\A(n)\ge 2$, since in these degrees $z_n$ is the image of $y_n$ under $H^*\BSpinc\to H^*\BSpin$ (the primitive generators $z_n$ and $y_n$ are detailed in appendix B). The remaining primitives are $y_6$ and $y_{2^k}$ for $k\ge 2$. Since $\A(6)=2$, $y_6=s_{3,3}=w_2^3+w_2w_4+w_6$. Of course $\A(2^k)=1$, so $y_{2^k}=w_2^{2^{k-1}}$. In either case, $y_n$ maps nontrivially, as witnessed by $w_2$ following theorem \ref{thm}.
\end{proof}

Lemmas \ref{37} and \ref{25} together prove theorem \ref{mainsurj} and thus complete the proof of theorem \ref{mymain}.

\chapter{Algebraic Background}

In this section we provide an algebraic overview of the Steenrod algebra and its dual. We then discuss certain subalgebras and quotient algebras relevant to our purposes. We will assume all (co)homology has coefficients in $\Z_2$ unless otherwise stated.

\section{The Steenrod algebra and its dual}

We denote the Steenrod algebra as $\m A\coloneqq H^*H\Z_2$. As an algebra, $\m A$ is generated by the Steenrod squares $\Sq^n$ subject to the Adem relations: for $m<2n$,
\[\Sq^m\Sq^n=\sum_{0\le i\le\lfloor m/2\rfloor}\binom{n-i-1}{m-2i}\Sq^{m+n-i}\Sq^i. \]
Now $\m A$ is a Hopf algebra, and the coproduct $\D\colon\m A\to\m A\T\m A$ is characterized as the algebra homomorphism satisfying the Cartan formula $\D(\Sq^k)=\sum_{i+j=k}\Sq^i\T\Sq^j$. We write $\m A_*$ for the dual algebra. Milnor established that $\m A_*$ is the polynomial ring (over $\Z_2$) with generators $\xi_i$ of degree $2^i-1$ for all $i>0$ (we also use the convention $\xi_0=1$). We will abuse notation and reuse $\D$ to denote the coproduct in $\m A_*$:
\[\D(\xi_k)=\sum_{i+j=k} \xi_i^{2^j}\T\xi_j.\]
Similarly, we use $\mu$ for the product and $\chi$ for the conjugate map in either case. The latter is characterized by commutativity of the following diagrams.
\begin{figure}[h!]\centering
	\begin{tikzcd}[column sep={1.25cm,between origins}, row sep={1.6cm,between origins}]
		&\m A\T \m A \ar[rr,"1\T\chi"] && \m A\T \m A\ar[dr] &\\
		\m A\ar[ur]\ar[rr]\ar[dr] && \Z_2\ar[rr] && \m A\\
		&\m A\T \m A \ar[rr,"\chi\T1"] && \m A\T \m A\ar[ur] &
	\end{tikzcd}
	\qquad
	\begin{tikzcd}[column sep={1.25cm,between origins}, row sep={1.6cm,between origins}]
		&\m A_*\T \m A_* \ar[rr,"1\T\chi"] && \m A_*\T \m A_*\ar[dr] &\\
		\m A_*\ar[ur]\ar[rr]\ar[dr] && \Z_2\ar[rr] && \m A_*\\
		&\m A_*\T \m A_* \ar[rr,"\chi\T1"] && \m A_*\T \m A_*\ar[ur] &
	\end{tikzcd}
\end{figure}

The Steenrod algebra acts naturally on the $\Z_2$-cohomology of any space and satisfies the following properties: for a space $X$ with $x,y\in H^*X$
\begin{enumerate}
  \item $\Sq^n(x)=x^2$ if $\deg(x)=n$
  \item $\Sq^n(x)=0$ if $\deg(x)<n$
  \item $\Sq(xy)=\Sq(x)\Sq(y)$, where $\Sq=1+\Sq^1+\Sq^2+\cdots$.
\end{enumerate}

As we mentioned, third property is also called the Cartan formula. Note that $\Sq(x)$ is always a finite sum due to the second property. An instructive application of the properties of the $\Sq^n$ is when $x$ is the generator of $H^*\RP^\infty$. In this case $\Sq(x)=1+x+x^2$, and this completely determines the action of $\m A$ on $H^*\RP^\infty$. Via the splitting principle, one can use this to determine the action on $\BO$.
\\

As an algebra, $\m A$ is generated by the classes $\Sq^n$ for $n\ge 0$, but this is not a minimal generating set: for example, $\Sq^3=\Sq^1\Sq^2$. We demonstrate now that $\m A$ is generated (as an algebra) by only the classes $\Sq^{2^n}$ for all $n$. First we need a useful fact of arithmetic modulo $2$.

\begin{lemma}\label{binomialfact}
	Given positive integers $a,b$ with base-$2$ expansions $a=a_0+2a_1+\cdots+2^na_n$ and $b=b_0+2b_1+\cdots+2^nb_n$, the modulo-$2$ binomial coefficient $\binom{a}{b}$ factors as
	\[\binom{a_0+2a_1+2^2a_2+\cdots+2^na_n}{b_0+2b_1+2^2b_2+\cdots+2^nb_n}=\binom{a_0}{b_0}\binom{a_1}{b_1}\binom{a_2}{b_2}\cdots\binom{a_n}{b_n}. \]
\end{lemma}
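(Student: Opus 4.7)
The plan is to prove this by the classical Lucas-style argument via generating functions in $\mathbb{F}_2[x]$. I would work entirely modulo $2$ and compare the coefficient of $x^b$ in $(1+x)^a$ computed two different ways.

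First, I would recall Frobenius (the ``freshman's dream'' in characteristic $2$): $(1+x)^{2^i} \equiv 1 + x^{2^i} \pmod 2$, provable by induction on $i$ using the fact that $\binom{2}{1} = 2 \equiv 0$. Then, writing $a = a_0 + 2a_1 + \cdots + 2^n a_n$ with each $a_i \in \{0,1\}$, I would compute
\[
(1+x)^a \;=\; \prod_{i=0}^{n} (1+x)^{2^i a_i} \;=\; \prod_{i=0}^{n} \bigl((1+x)^{2^i}\bigr)^{a_i} \;\equiv\; \prod_{i=0}^{n} (1+x^{2^i})^{a_i} \pmod 2.
\]
Since each $a_i \in \{0,1\}$, expanding the right-hand side yields
\[
\prod_{i=0}^n (1+x^{2^i})^{a_i} \;=\; \prod_{i=0}^{n} \sum_{b_i \in \{0,1\}} \binom{a_i}{b_i} x^{2^i b_i} \;=\; \sum_{(b_0,\dots,b_n)} \Bigl(\prod_{i=0}^n \binom{a_i}{b_i}\Bigr)\, x^{b_0 + 2b_1 + \cdots + 2^n b_n}.
\]

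Next I would compare coefficients of $x^b$ on both sides. On the left, the coefficient is $\binom{a}{b}$ by the binomial theorem. On the right, each monomial $x^{b_0 + 2b_1 + \cdots + 2^n b_n}$ arises from exactly one tuple $(b_0,\ldots,b_n) \in \{0,1\}^{n+1}$, namely the binary digits of $b$, because every integer less than $2^{n+1}$ has a unique base-$2$ expansion with digits in $\{0,1\}$. Hence the coefficient of $x^b$ on the right is precisely $\prod_{i=0}^n \binom{a_i}{b_i}$, yielding the desired identity modulo $2$.

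There is no real obstacle here; the only minor point to be careful about is handling the case where $b$ has more than $n+1$ binary digits (i.e. $b \geq 2^{n+1}$): in that case $\binom{a}{b} = 0$ since $b > a$, and on the right-hand side one of the factors $\binom{a_i}{b_i}$ with $i > n$ would be $\binom{0}{b_i}$ for some $b_i = 1$, which is zero, so both sides still agree. This completes the proof.
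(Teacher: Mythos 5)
Your proof is correct and follows essentially the same route as the paper: both compute $\binom{a}{b}$ as the coefficient of $x^b$ in $(1+x)^a$, factor via Frobenius into $\prod_i(1+x^{2^i})^{a_i}$, and use uniqueness of the base-$2$ expansion to conclude the coefficient is $\prod_i\binom{a_i}{b_i}$. Your version is slightly more explicit in expanding the product as a sum over digit tuples, but the argument is the same.
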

\begin{proof}
	By definition or otherwise, $\binom{a}{b}$ is the coefficient of $x^b$ in $(1+x)^a$. Now the Frobenius map gives
	\begin{align*}
		(1+x)^a&=(1+x)^{a_0}(1+x)^{2a_2}(1+x)^{2^2a_2}\cdots(1+x)^{2^na_n}\\
		&=(1+x)^{a_0}\big(1+x^2\big)^{a_2}\big(1+x^{2^2}\big)^{a_2}\cdots\big(1+x^{2^n}\big)^{a_n}.
	\end{align*}
	It follows from the Euclidean division algorithm that the only way to obtain a $x^b$ term is if each factor $(1+x^{2^k})^{a_k}$ contributes $x^{2^kb_k}$. Hence $\binom{a}{b}$ is nonzero precisely if every factor $\binom{a_i}{b_i}$ is nonzero.
\end{proof}

Notice that a factor $\binom{a_i}{b_i}$ is $0$ if and only if $a_i=0$ and $b_i=1$. Hence a useful interpretation of lemma \ref{binomialfact} is that $\binom{a}{b}=1$ if and only if every digit in the base-$2$ expansion of $a$ is greater than or equal to the corresponding digit in the base-$2$ expansion of $b$.

\begin{theorem}
	As an algebra, $\m A$ is generated by the classes $\Sq^{2^n}$ for all $n$.
\end{theorem}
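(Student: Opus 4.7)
The plan is to argue by strong induction on $k$ that every $\Sq^k$ is a polynomial in the classes $\Sq^{2^n}$. The base case when $k$ is a power of $2$ is immediate, and the inductive step will rely on picking a clever Ad\`em relation that expresses $\Sq^k$ in terms of products of strictly smaller Steenrod squares, at which point the inductive hypothesis does the rest.

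Concretely, suppose $k$ is not a power of $2$ and write $k = 2^j + r$ where $2^j$ is the largest power of $2$ that is at most $k$, so that $0 < r < 2^j$. Then $r < 2 \cdot 2^j$, so the Ad\`em relation applies to $\Sq^r \Sq^{2^j}$ and yields
\[\Sq^r \Sq^{2^j} = \sum_{0 \le i \le \lfloor r/2 \rfloor} \binom{2^j - i - 1}{r - 2i} \Sq^{k-i} \Sq^i.\]
I want to isolate the $i=0$ term, whose coefficient is $\binom{2^j - 1}{r}$. Since $2^j - 1$ has binary expansion consisting entirely of $1$s in positions $0, 1, \ldots, j-1$ and $r < 2^j$ is supported on those same positions, Lemma \ref{binomialfact} shows that this coefficient equals $1 \pmod 2$. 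Rearranging then gives
\[\Sq^k = \Sq^r \Sq^{2^j} + \sum_{i \ge 1} \binom{2^j - i - 1}{r - 2i} \Sq^{k-i} \Sq^i,\]
so $\Sq^k$ is a sum of products $\Sq^a \Sq^b$ with $a, b < k$. By the strong inductive hypothesis, each such $\Sq^a$ and $\Sq^b$ is already a polynomial in the squares $\Sq^{2^n}$, hence so is $\Sq^k$, completing the induction.

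The main obstacle is choosing the right Ad\`em relation: many splittings $k = a + b$ either do not satisfy the admissibility condition $a < 2b$ or produce a vanishing leading binomial coefficient. The decomposition $(r, 2^j)$ is the right one precisely because $2^j - 1$ is an all-ones binary number, which guarantees that the leading coefficient is odd regardless of $r$. Beyond that point the proof is essentially formal, and Lemma \ref{binomialfact} is the only nontrivial input.
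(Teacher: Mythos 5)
Your proof is correct and follows essentially the same route as the paper: the same decomposition $k = 2^j + r$ with $2^j$ the largest power of $2$ below $k$, the same Ad\`em relation applied to $\Sq^r\Sq^{2^j}$, and the same appeal to Lemma \ref{binomialfact} to see that $\binom{2^j-1}{r} = 1$. Your explicit strong-induction framing is if anything slightly cleaner than the paper's remark about "avoiding infinite descent."
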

\begin{proof}
	If $n$ is not a power of $2$, pick the largest integer $k$ such that $2^k<n$. Let $b=2^k$ and $a=n-b$. Then
	\[\Sq^a\Sq^b=\sum_c\binom{b-c-1}{a-2c}\Sq^{a+b-c}\Sq^c. \]
	When $c=0$, we have the term $\binom{b-1}{a}\Sq^{a+b}\Sq^0=\binom{b-1}{a}\Sq^n$. Now $b-1=2^k-1$ has the base $2$ expansion $1+2+2^2+\cdots+2^{k-1}$, and, by construction, $a=n-b$ can only contain summands $2^i$ with $i\le k-1$. By corollary (\ref{binomialfact}), $\binom{b-1}{a}=1$, and thus
	\[\Sq^a\Sq^b=\Sq^n+\sum_{c>0}\binom{b-c-1}{a-2c}\Sq^{a+b-c}\Sq^c. \]
	That is,
	\[\Sq^n=\Sq^{n-2^k}\Sq^{2^k}+\sum_{c>0}\binom{2^k-c-1}{n-2^k-2c}\Sq^{n-c}\Sq^c. \]
	Thus $\Sq^n$ can be written in terms of $\Sq^{2^k}$ and $\Sq^i$ with $i<n$. To avoid infinite descent, we see that $\Sq^n$ can be written using only $\Sq^{2^j}$ for various $j$.
\end{proof}



Given a sequence $I=(i_1,i_2,\ldots,i_r)$, we use the shorthand $\Sq^I=\Sq^{i_1}\Sq^{i_2}\cdots\Sq^{i_r}$. The sequence $I$ is called \textbf{admissible} if $i_j\ge 2i_{j+1}$ for all $j$; hence the monomials $\Sq^I$ are precisely those with no nontrivial Ad\'em relations. It follows from the Ad\'em relations that $\m A$ is generated as a vector space by the monomials $\Sq^I$ for all admissible sequences $I$. Serre established the linear independence of these classes, hence they form a basis known as the Serre-Cartan basis for $\m A$.
\\

The coproduct also has a particularly elegant description using the $\Sq^I$ notation: given any sequence $I=\left(i_1,\ldots,i_n\right)$, we have
\begin{equation}\label{nicecoproduct}
	\D\left(\Sq^I\right)=\sum_{I_1+I_2=I}\Sq^{I_1}\T\Sq^{I_2},
\end{equation}
where $I_1+I_2$ denotes component-wise addition. This formula becomes clear upon examination: applying $\D$ to each factor, the terms in $\D(\Sq^I)$ correspond to a choice of one term from each $\D(\Sq^{i_j})$. These correspond to a choice of number $a_j$ between $0$ and $i_j$ which indicates the term $\Sq^{a_j}\T\Sq^{b_j}$ in $\D(\Sq^{i_j}$ (where $a_j+b_j=i_j$). Hence the terms of $\Sq^I$ correspond to any and all choices of sequences which are component-wise between $(0,0,\ldots,0)$ and $(i_1,i_2,\ldots, i_n)$. Each such sequence of course has a complimentary sequence such that their sum is $I$.

\begin{remark}
In the coproduct formula (\ref{nicecoproduct}), it is worth emphasizing that none of $I$, $I_1$, or $I_2$ are required to be admissible. Also, we are \textit{adding} sequences together rather than concatenating: for example, $\D(\Sq^{3,5,2})$ will a priori have $72$ terms including $\Sq^{3,5,2}\T 1$, $\Sq^{1,3,2}\T\Sq^{2,2,0}$, and $\Sq^{1,1,1}\T\Sq^{2,4,1}$. Of course, many of these terms vanish due to nontrivial relations. The second and third terms mentioned vanish since $\Sq^{1,3,2}=0$ and $\Sq^{1,1,1}=0$.
\end{remark}

\section{A closer look at $\m A_*$}

One must be careful when taking the dual of an infinite dimensional space. Here, $\m A_*$ is the subspace of the $\Hom_{\Z_2}(\m A,\Z_2)$ generated by functionals with finite-dimensional support. Since $\m A$ is infinite dimensional, the true linear dual has strictly larger dimension, but we nonetheless refer to $\m A_*$ as simply ``dual'' to $\m A$ and vice versa. This is possible since, by only allowing functionals with finite dimensional support, we ensure that $\m A_*$ is isomorphic to $\m A$ as a graded vector space.
\\

Define $I_n=\left(2^{n-1},2^{n-2},\ldots, 4,2,1\right)$ for $n>0$ and $I_0=(0)$ (that is, $\Sq^{I_0}=\Sq^0=1$). For all $n\ge 0$, one can define $\xi_n$ as the linear dual to $\Sq^{I_n}$ with respect to the Serre-Cartan basis for $\m A$. That is, $\langle \xi_n,\Sq^I\rangle$ is $1$ if $I=I_n$ and is $0$ if $I$ is any other admissible sequence. Of course, this also defines $\langle \xi_n,\Sq^I\rangle$ for any $I$ after applying the Adem relations. We will write $\xi_0=1$. We noted earlier that cocommutativity of $\m A$ ensures that $\m A_*$ is commutative. Some relatively simple combinatorics will show that, for every $n$, the number of admissible sequences of degree $n$ corresponds bijectively to the number of partitions of $n$ using the numbers $2^k-1=\deg(I_k)$. This turns out to not give false hope: Serre showed that, as an algebra, $\m A_*$ is the polynomial ring $\Z_2[\xi_1,\xi_2,\ldots]$. Note that $\deg(\xi_n)=2^n-1$, and as with the notation $\Sq^I$, we write $\xi^J=\xi_1^{j_1}\xi_2^{j_2}\cdots\xi_r^{j_r}$ for any a sequence $J=(j_1,\ldots, j_r)$ of nonnegative integers. Note that using admissible sequences correspond to Serre-Cartan basis elements for $\m A$, but for $\m A_*$ it is more convenient to use arbitrary sequences (with finite length and nonnegative integer entries).
\\

 The comultiplication on $\m A_*$ is given by
\[\mu^*(\xi_n)=\sum_{i=0}^n\xi_{n-i}^{2^i}\T\xi_i=\xi_n\T 1+\xi_{n-1}^2\T \xi_1+\xi_{n-2}^4\T\xi_2+\cdots+\xi_1^{2^{n-1}}\T\xi_{n-1}+1\T\xi_n.\]
Sometimes it is convenient to use the Hopf conjugates $\zeta_n\coloneqq\chi(\xi_n)$. Then the coproduct is
\[\mu^*(\zeta_n)=\sum_{i=0}^n\zeta_i\T\zeta_{n-i}^{2^i}=1\T\zeta_n+\zeta_1\T\zeta_{n-1}^2+\zeta_2\T\zeta_{n-2}^4+\cdots+\zeta_{n-1}\T\zeta_1^{2^{n-1}}+\zeta_n\T 1.\]

In theory, one can compute the coproduct directly by dualizing from the product on $\m A$. Consider $\mu^*(\xi_2)$, for example. By definition, $\xi_2$ is dual to $\Sq^{2,1}$. There are only two vector space generators in degree $3$, and we have
\[\copo{\xi_2}{\Sq^{2,1}}=1\and\copo{\xi_2}{\Sq^3}=0. \]
The basis elements which may be summands of $\mu^*(\xi_2)$ are
\[\xi_2\T 1,\quad \xi_1^3\T1,\quad \xi_1^2\T\xi_1,\quad \xi_1\T\xi_1^2, \quad 1\T\xi_1^3, \quad\text{and}\quad 1\T\xi_2,\]
and we can simply check which of these are summands in by evaluating against basis elements in the Serre-Cartan basis elements $\Sq^3\T 1$, $\Sq^{2,1}\T\Sq^1$, etc. For example,
\[\copo{\mu^*\left(\xi_2\right)}{\Sq^3\T 1}=\copo{\xi_2}{\mu\left(\Sq^3\T 1\right)}=\copo{\xi_2}{\Sq^3}=0.\]
In this fashion we see that $\mu^*(\xi_2)$ evaluates to $1$ on precisely the generators $\Sq^{2,1}\T 1$, $\Sq^2\T\Sq^1$ and $1\T\Sq^{2,1}$. By examining the lower dimensions one finds $\mu^*(\xi_2)=\xi_2\T 1+ \xi_1^2\T\xi_1+1\T\xi_2$.
\\

We defined $\xi_n$ to be dual to $\Sq^n$ in the basis consisting of monomials $\Sq^I$ with $I$ admissible; now we set $\Sq(I)=\Sq(i_1,i_2,\ldots,i_n)$ to be dual to $\xi^I=\xi_1^{i_1}\xi_2^{i_2}\cdots\xi_n^{i_n}$ with respect to the basis consisting of all monomials $\xi^J$ for any sequence $J$. Note that the degree of $\xi^I$ is $\sum_j i_j(2^j-1)$, and we can obtain an element of this degree in the original basis of $\m A$ as follows: create an admissible sequence $I'$ such that the excess $i_j'-2i_{j+1}'$ in position $j$ is equal to $i_j$. One might hope that $\Sq^{I'}$ is equal to $\Sq(I)$, but this is not true in general. However, we can order these bases to obtain a bilinear pairing which recovers $\Sq^{I'}$ from $\Sq(I)$ and vice versa. This ordering on both the sets of admissible sequences and the sets of sequences is lexicographic from the right. For example,
	\[(100,21,3)<(42,22,3)<(16,8,4)<(8,4,2,1). \]

\section{Pairing between $\m A$ and $\m A_*$}

Let $\m I$ be the set of finite sequences of nonnegative integers and let $\m J\subset\m I$ be the set of such sequences which are admissible. We give $\m I$ and $\m J$ total orders using the lexicographic ordering from the right. That is, longer sequences have higher order, and, if two sequences have the same length, we compare the rightmost entries where they differ (larger means higher order). For example,
\[(5)<(3,1)<(4,1)<(0,2)<(0,0,1). \]
Note that this order applies to both $\m I$ and $\m J$. We also have an order-preserving bijection $\sigma\colon J\to I$ given by $\sigma(j_1,\ldots, j_n)=(j_1-2j_2,j_2-2j_3,\ldots, j_{n-1}-2j_n,j_n)$.
\begin{lemma}
	Let $I$ and $J$ be admissible sequences. If $I\ge J$, then
	\[\left\langle\xi^{\sigma(I)},\Sq^J\right\rangle=\begin{cases}
		1\pw I=J\\
		0\pw I>J.
	\end{cases} \]
\end{lemma}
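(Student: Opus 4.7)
Plan.

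The strategy is to introduce Milnor's monomial-dual basis $\{\Sq(R)\}_{R\in\m I}$ of $\m A$, defined by $\langle \xi^R,\Sq(S)\rangle=\delta_{R,S}$, and deduce the lemma from the \emph{triangular change-of-basis formula}
\[
\Sq^J \;=\; \Sq(\sigma(J)) \;+\!\!\!\sum_{\substack{I\in\m J,\ I<J\\ \deg I=\deg J}}\!\! c_{I,J}\,\Sq(\sigma(I)), \qquad c_{I,J}\in\Z_2.
\]
Granting this, pair both sides with $\xi^{\sigma(I_0)}$ for admissible $I_0\ge J$. Orthogonality of the Milnor basis and bijectivity of $\sigma$ immediately collapse the result to $\delta_{\sigma(I_0),\sigma(J)}=\delta_{I_0,J}$, since every $I$ appearing in the sum satisfies $I<J\le I_0$ and hence $\sigma(I)\neq\sigma(I_0)$. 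This is exactly the two cases of the lemma.

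To establish the triangular expansion, I would induct on the length $r$ of $J=(j_1,\ldots,j_r)$. For the base case $r=1$, note that any $R\in\m I$ of degree $j_1$ whose length is at least $2$ has $\sigma^{-1}(R)\in\m J$ of length at least $2$, hence $\sigma^{-1}(R)>(j_1)$ in the right-lex order; consequently the sum in the formula is empty, and the claim reduces to $\Sq^{j_1}=\Sq(j_1)$. This in turn reduces to verifying $\langle \xi^R,\Sq^{j_1}\rangle=\delta_{R,(j_1)}$ for every monomial $\xi^R$ of degree $j_1$. I would prove this directly via the adjunction $\langle xy,z\rangle=\langle x\otimes y,\D z\rangle$ applied iteratively together with the Cartan-style formula $\D^{(k-1)}(\Sq^{j_1})=\sum\Sq^{n_1}\otimes\cdots\otimes\Sq^{n_k}$: a nonzero pairing forces each tensor factor of $\xi^R$ (after iterating $\D$) to pair with a single Steenrod square, and an easy combinatorial check (using $\langle \xi_i,\Sq^m\rangle=\delta_{i,1}\delta_{m,1}$) constrains $\xi^R=\xi_1^{j_1}$ with pairing equal to $1$.

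For the inductive step, I would factor $\Sq^J=\Sq^{j_1}\cdot\Sq^{J'}$ with $J'=(j_2,\ldots,j_r)$, apply the inductive hypothesis to $\Sq^{J'}$, and multiply by $\Sq^{j_1}=\Sq(j_1)$ using Milnor's product formula for $\Sq(a)\cdot\Sq(R)$ in the monomial-dual basis. The leading Milnor contribution to $\Sq(j_1)\cdot\Sq(\sigma(J'))$ is the monomial dual $\Sq(j_1-2j_2,\,j_2-2j_3,\ldots,j_r)=\Sq(\sigma(J))$ with coefficient $1$; every other term is, after reduction mod $2$, either killed by a vanishing binomial coefficient or corresponds under $\sigma^{-1}$ to an admissible sequence strictly below $J$ in right-lex. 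The hard part is precisely this last bookkeeping: one must systematically track, using Lemma \ref{binomialfact} on parity of binomial coefficients, which of the terms produced by Milnor's product formula actually survive and verify that none of them can indexically exceed $J$. This is essentially the classical statement that the Serre--Cartan-to-Milnor transition matrix on $\m A$ is unitriangular in the right-lex order, and it is where the full combinatorial content of the lemma resides.
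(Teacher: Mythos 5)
Your route is genuinely different from the paper's. The paper never introduces the Milnor basis: it evaluates $\left\langle\xi^{\sigma(I)},\Sq^J\right\rangle$ directly by factoring $\xi^{\sigma(I)}=\xi^{\widetilde I}\cdot\xi_n$ (one fewer copy of the top generator $\xi_n$), passing across the product--coproduct adjunction using the coproduct formula (\ref{nicecoproduct}) for $\D(\Sq^J)$, and using that $\xi_n$ pairs nontrivially only against $\Sq^{I_n}$; this strips one dual generator at a time and terminates with a length count. You instead repackage the statement as unitriangularity of the Serre--Cartan-to-Milnor change of basis and propose to prove that by induction on the length of $J$ via Milnor's product formula. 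Both are classical and both can be made to work; yours isolates the clean structural fact $\Sq^J=\Sq(\sigma(J))+(\text{strictly lower Milnor terms})$, while the paper's argument is more elementary and self-contained, needing only the coproduct formula (which the paper proves) and not the product formula in the Milnor basis (which it never states).

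Two caveats, the second of which is a real incompleteness. First, your ``reduction'' to unitriangularity is a restatement rather than a reduction: the identity $\left\langle\xi^{\sigma(I)},\Sq^J\right\rangle=\delta_{I,J}$ for $I\ge J$ \emph{is} the unitriangularity of that transition matrix, so the entire content of the lemma sits in your second step. Second, the inductive step of that second step is precisely what you defer: to close it you must import (or prove) Milnor's product formula and then verify that in $\Sq(j_1)\cdot\Sq(S)$, for $S\le\sigma(J')$, every admissible-matrix contribution other than the one producing $\Sq(\sigma(J))$ yields an index strictly below $\sigma(J)$ in the right-lexicographic order. That verification does go through --- one argues from the rightmost entry leftward, exactly parallel to how the paper's proof terminates --- but it is comparable in length to the paper's entire proof and is asserted rather than carried out. (Your base case $\Sq^{j_1}=\Sq(j_1)$ is correct and is essentially the single-generator instance of the paper's peeling argument; note also that Lemma \ref{binomialfact} alone will not suffice for the bookkeeping, since Milnor's coefficients are multinomial rather than binomial.) So: viable and correct in outline, but the central combinatorial verification is named rather than proved.
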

\begin{proof}
Assuming $I$ is nontrivial, we can write $I=(i_1,\ldots, i_n)$ with $i_n>0$. Then we can subtract $1$ from the last entry of $\sigma(I)$ to get $\rew I\coloneqq(i_1-2i_2,i_2-2i_3,\ldots, i_{n-1}-2i_n,i_n-1)$, and we have
	\begin{align*}
		\left\langle \xi^{\sigma(I)},\Sq^J\right\rangle&=\left\langle\D^*\left(\xi^{\rew I}\T\xi_n\right),\Sq^J\right\rangle\\
		&=\left\langle \xi^{\rew I},\D\left(\Sq^J\right)\right\rangle\\
		&=\sum_{J_1+J_2=J}\left\langle\xi^{\rew I}\T\xi_n,\Sq^{J_1}\T\Sq^{J_2}\right\rangle.
	\end{align*}
	Since $\xi_n$ is dual to $I_n\coloneqq\left(2^{n-1},2^{n-2},\ldots,4,2,1\right)$, the only possible nonzero term here is when $J_2=I_n$. Thus
	\begin{align*}
		\sum_{J_1+J_2=J}\left\langle\xi^{\rew I}\T\xi_n,\Sq^{J_1}\T\Sq^{J_2}\right\rangle&=\left\langle\xi^{\rew I},\Sq^{J-I_n}\right\rangle \left\langle\xi_n,\Sq^{I_n}\right\rangle\\
		&=\left\langle\xi^{\rew I},\Sq^{J-I_n}\right\rangle.
	\end{align*}
	At this point, notice that $J$ must have at least length $n$ for $J-I_n$ to be nonnegative. In addition, the assumption $I\ge J$ means the length of $J$ is at most $n$, so we can write $J=(j_1,\ldots, j_n)$ (note that $I\ge J$ also implies $i_n\ge j_n$). If $I=J$, then we can repeat this process until reaching $\langle\xi_k,\Sq^{I_k}\rangle=1$ for some minimal $k$. We now assume $I>J$. It's easy to see that $J-I_n=(j_1-2^{n-1},j_2-2^{n-2},\ldots, j_{n-1}-2,j_n-1)$ is admissible, and since $i_n\ge j_n$, we can repeat this process $j_n-1$ more times. If $i_n=j_n$, we start from the beginning, replacing $\rew I$ with $(i_1-2i_2,\ldots, i_{n-1}-2i_n)$ and $J$ with $(j_1-2^{n-1}j_n,j_2-2^{n-2}j_n,\ldots, j_{n-2}-4j_n,j_{n-1}-2j_n)$. In this case, $i_{n-1}-2i_n\ge j_{n-1}-2j_n$, and we continue as long as possible. Eventually, we reach some value $\langle\xi^{I'},\Sq^{J'}\rangle$ where the length of $I'$ is strictly larger than the length of $J'$. As we saw, in this case $\langle\xi^{I'},\Sq^{J'}\rangle=0$, which completes the proof.
\end{proof}

\section{The Hopf algebra antipode map and Hopf subalgebras}

Since $\m A_*$ is also a Hopf algebra, it has an antipode map $\chi\colon\m A_*\to\m A_*$ characterized by the commutativity of the following diagram.

\begin{figure}[h!]\centering
	\begin{tikzcd}[column sep={1.5cm,between origins}, row sep={1.5cm,between origins}]
		&\m A_*\T\m A_* \ar[rr,"1\T\chi"] && \m A_*\T\m A_*\ar[dr] &\\
		\m A_*\ar[ur]\ar[rr]\ar[dr] && \Z_2\ar[rr] && \m A_*\\
		&\m A_*\T\m A_* \ar[rr,"\chi\T1"] && \m A_*\T\m A_*\ar[ur] &
	\end{tikzcd}
\end{figure}

\noindent The diagonal maps are the obvious product and coproduct maps, and the unlabeled horizontal maps are the augmentation (counit) and unit maps. For $n\ge 0$, the middle row takes $\xi_n$ to $0$. Comparison with the top row shows $\sum_{i+j=n}\xi_i^{2^j}\chi(\xi_j)=0$. In particular $\chi(\xi_n)$ is determined inductively via
\[\chi(\xi_n)=\xi_1^{2^{n-1}}\chi(\xi_{n-1})+\xi_2^{2^{n-2}}\chi(\xi_{n-2})+\cdots+\xi_{n-2}^4\chi(\xi_2)+\xi_{n-1}^2\chi(\xi_1)+\xi_n.\]
Adopting the semi-standard convention $\zeta_n\coloneqq\chi(\xi_n)$, we equivalently have
\[\zeta_n=\xi_n+\sum_{i=1}^{n-1}\xi_i^{2^{n-i}}\zeta_{n-i}=\xi_n+\xi_{n-1}^2\zeta_1+\xi_{n-2}^4\zeta_2+\cdots+\xi_1^{2^{n-1}}\zeta_{n-1}. \]

For $x\in \m A$, we write $\D(x)=\sum_i x'_i\T x''_i$. The fact that $\m A$ is connected means that, if $x\in\m A^+$, then the conjugate satisfies $\sum_i\chi(x'_i)x''_i=0=\sum_ix'_i \chi(x''_i)$, where $\m A^+$ denotes the positively graded part of $\m A$. In this case, $\D(x)$ is the sum of $1\T x+x\T 1$ plus terms in $\m A^+\T \m A^+$, hence one can inductively determine $\chi(x)$ by this property. For example, with the Steenrod algebra we have $\D(\Sq^1)=\Sq^1\T 1+1\T \Sq^1$, hence $\mu(\chi(\Sq^1)\T 1)+\mu(\chi(1)\T\Sq^1)=0$ and $\chi(\Sq^1)=\Sq^1$. Next $\D(\Sq^2)=\Sq^2\T 1+\Sq^1\T\Sq^1+1\T\Sq^2$ and so $\chi(\Sq^2)=\chi(\Sq^1)\Sq^1+\Sq^2=\Sq^2$. For $\Sq^3$ the antipode is nontrivial: $\chi(\Sq^3)=\chi(\Sq^2)\Sq^1+\chi(\Sq^1)\Sq^2+\Sq^3=\Sq^2\Sq^1$.
\\

We say $E\subseteq\m A$ is a sub-Hopf algebra of $\m A$ if $E$ is a Hopf algebra whose structure maps are restrictions of those for $\m A$. In particular, $E$ is a subalgebra of $\m A$, the dual $E_*$ is a sub-coalgebra of $\m A_*$, and the conjugation maps preserve $E$ and $E_*$ as subspaces.
\\

The correct notion of the quotient of $\m A$ by a Hopf subalgebra $E$ is a bit subtle: since $E$ is unital, the quotient by the left or right ideal generated by $E$ gives the zero ring. Instead, we use the semi-standard notation $\m A\hmod E$ to denote the Hopf algebra quotient $\Z_2\T_{E}\m A$. In light of this, the dual notion involves the cotensor product. That is, we say $\Z_2\T_E\m A$ is dual to $\Z_2\square_E\m A_*$.

\section{A brief review of comodule theory}

We now include a superficial review of comodules and cotensor products. Given an algebra $A$ over a base ring $k$, a right $A$-module $M$ has the structure of a map $\mu_M\colon M\T_k A\to M$; a left $A$-module has $\mu_N\colon R\T_k N\to R$. Completely dual to this, a right $A$-comodule $C$ has coaction map $\D_C\colon C\to C\T_k A$, and a left $A$-comodule $D$ similarly has $\D_D\colon D\to A\T_k D$. Now the tensor product $M\T_A N$ may be defined as the cokernel of
\[M\T A\T N\xrightarrow{\mu_M\T 1-1\T \mu_N} M\T N,\]
where the unadorned tensor $\T$ denotes $\T_k$. Dually, the cotensor product $C\square_R D$ is the kernel of
\[C\T D\xrightarrow{\D_C\T 1-1\T\D_D}C\T A\T D. \]

As is consistent with the literature, we will refer to elements $Q_i\in\m A$ defined as the linear dual to $\xi_i$ with respect to the monomial basis. The following facts can be found in \cite{rog} $15.5$.

\begin{lemma}\label{dualquotients}
	For all $n$, $\m A(n)\coloneqq\langle \Sq^1,\Sq^2,\Sq^4,\ldots, \Sq^{2^n}\rangle$ and $E(n)\coloneqq\langle Q_0,Q_1,\ldots, Q_n\rangle$ are sub-Hopf algebras of $\m A$. Moreover, $E(n)$ is the exterior algebra on $Q_0,\ldots, Q_n$. The quotient $\m A\hmod\m A(n)=\Z_2\T_{\m A(n)}\m A$ is dual to
	\[\Z_2\square_{\m A(n)_*}\m A_*=\Z_2\left[\xi_1^{2^{n+1}},\xi_2^{2^n},\ldots,\xi_n^4,\xi_{n+1}^2,\xi_{n+2},\xi_{n+3},\xi_{n+4},\cdots\right]. \]
	The quotient $\m A\hmod E(n)=\Z_2\T_{E(1)}\m A$ is dual to
	\[\Z_2\square_{E(n)_*}\m A_*=\Z_2[\xi_1^2,\xi_2^2,\ldots,\xi_n^2, \xi_{n+1}^2,\xi_{n+2},\xi_{n+3},\xi_{n+4},\ldots]. \]
\end{lemma}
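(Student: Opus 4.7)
The plan is to work primarily on the dual side, exploiting Milnor's polynomial description $\m A_* = \Z_2[\xi_1, \xi_2, \dots]$ with coproduct $\D(\xi_k) = \sum_j \xi_{k-j}^{2^j} \T \xi_j$. I would first identify conjectural Hopf algebra quotients of $\m A_*$: namely $E(n)_* = \m A_* / I_E$ with $I_E = (\xi_i^2 : 1 \leq i \leq n+1) + (\xi_j : j \geq n+2)$, which visibly is the exterior algebra on $\xi_1, \dots, \xi_{n+1}$; and $\m A(n)_* = \m A_* / I_A$ with $I_A = (\xi_i^{2^{n+2-i}} : 1 \leq i \leq n+1) + (\xi_j : j \geq n+2)$.

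The first step is to verify that $I_E$ and $I_A$ are coideals, so that $E(n)_*$ and $\m A(n)_*$ inherit Hopf algebra quotient structures. This reduces to direct computation using the Milnor coproduct and that $\D$ commutes with the Frobenius in characteristic two: for instance, $\D(\xi_i^{2^{n+2-i}}) = \sum_j \xi_{i-j}^{2^{j+n+2-i}} \T \xi_j^{2^{n+2-i}}$, and the exponent identity $j + n + 2 - i = n + 2 - (i-j)$ shows each left-hand factor with $0 < j < i$ already lies in $I_A$. Boundary terms and the $\xi_j$-relations are handled similarly, and the same template (with simpler arithmetic) covers $I_E$. Dualizing then yields that $\m A(n)$ and $E(n)$ are sub-Hopf algebras of $\m A$.

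The exterior algebra claim on $E(n)$ follows by dualizing: in $E(n)_*$ every mixed term of $\D(\xi_k)$ vanishes (each contains a factor $\xi_m^{2^r}$ with $r \geq 1$ and $m \leq n+1$), so $\xi_1, \dots, \xi_{n+1}$ are primitive. Dualizing the exterior relation $\xi_i^2 = 0$ and matching degrees $2^k - 1$ identifies the dual generators as the Milnor primitives $Q_0, \dots, Q_n$.

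For the cotensor product formulas, I would verify directly that the proposed generators are coinvariant using the same exponent identity: applying $(p \T 1) \D$ to $\xi_i^{2^{n+2-i}}$ leaves only the $j = i$ term $1 \T \xi_i^{2^{n+2-i}}$ once every other left-hand factor vanishes in $\m A(n)_*$; analogous direct checks cover $\xi_i^2$ and $\xi_j$ for $j \geq n+2$ in the $E(n)_*$ case. The main obstacle is completeness --- showing that these coinvariants exhaust the cotensor product. For this I would invoke the standard Milnor--Moore result that $\m A_*$ is cofree as a comodule over any of its Hopf algebra quotients, giving the Poincar\'e series identity $P(\m A_*) = P(\Z_2 \square_{\m A(n)_*} \m A_*) \cdot P(\m A(n)_*)$. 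Comparing this against the Poincar\'e series of the claimed polynomial subalgebra (a routine product of geometric series) closes the proof, and the identical argument handles $E(n)_*$.
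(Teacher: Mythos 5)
The paper offers no proof of this lemma at all---it is stated with a bare citation to \cite{rog}, \S 15.5---so there is no in-paper argument to compare against; your outline is the standard self-contained proof and it is essentially correct. The coideal verification via the exponent identity $j+(n+2-i)=n+2-(i-j)$ is exactly right (the same identity also handles the generators $\xi_j$, $j\ge n+2$, since $j\ge n+2$ forces $k\ge n+2-(j-k)$ for every mixed term), the coinvariance check for the proposed cotensor generators is the same computation read through the projection $p\colon\m A_*\to\m A(n)_*$, and closing the completeness argument by cofreeness of $\m A_*$ over a quotient Hopf algebra together with a Poincar\'e series count is the standard device: one checks
\[\prod_{i=1}^{n+1}\bigl(1-t^{(2^i-1)2^{n+2-i}}\bigr)^{-1}\prod_{j\ge n+2}\bigl(1-t^{2^j-1}\bigr)^{-1}\cdot P\bigl(\m A(n)_*\bigr)=P(\m A_*),\]
and algebraic independence of the listed elements is free because they are powers of distinct polynomial generators of $\m A_*$.

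The one place the outline is too quick is the identification of the dual of your quotient $\m A_*/I_A$ with the subalgebra $\langle\Sq^1,\Sq^2,\ldots,\Sq^{2^n}\rangle$ named in the statement. Dualizing the quotient produces \emph{a} sub-Hopf algebra of $\m A$, namely the annihilator of $I_A$, which is spanned by the Milnor basis elements $\Sq(r_1,\ldots,r_{n+1})$ with $r_i<2^{n+2-i}$; you still owe the argument that this coincides with the subalgebra generated by the $\Sq^{2^i}$ for $i\le n$ (for instance by checking each $\Sq^{2^i}=\Sq(2^i)$ lies in that span and that the subalgebra they generate already has the full dimension $2^{(n+1)(n+2)/2}$). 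For $E(n)$ the analogous identification is immediate with the paper's convention that $Q_i$ is the Milnor-basis dual of $\xi_{i+1}$. Everything else in the proposal is sound.
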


This lemma is overpowered for our purposes, but it shows $\m A\hmod\m A(1)$ is dual to
\[\Z_2[\xi_1^4,\xi_2^2,\xi_3,\xi_4,\ldots]= \Z_2[\zeta_1^4,\zeta_2^2,\zeta_3,\zeta_4,\ldots],\]
and $\m A\hmod E(1)$ is dual to
\[\Z_2[\xi_1^2,\xi_2^2,\xi_3,\xi_4,\ldots]= \Z_2[\zeta_1^2,\zeta_2^2,\zeta_3,\zeta_4,\ldots].\]

The topological significance is easy to state: for $\ko$ the connective cover of the (real) $\KO$-theory spectrum (and $\ku$ the connective cover for complex $K$-theory),
\[H^*\ko=\m A\hmod \m A(1)\quad\text{and hence}\quad H_*\ku=\Z_2[\xi_1^4,\xi_2^2,\xi_3,\xi_4,\ldots].\] On the other hand,
\[H^*\ku=\m A\hmod E(1)\quad\text{and}\quad H_*\ku=\Z_2[\xi_1^2,\xi_2^2,\xi_3,\xi_4,\ldots].\]
Note that $E(0)=\m A(0)$ is generated by $1$ and $\Sq^1$, and we have also described $H^*H\Z=\m A\hmod\m A(0)$ and $H_*H\Z=\Z_2[\xi_1^2,\xi_2,\xi_3,\ldots]$.
\\

For further reference, see \cite{adams} (proposition $16.6$ of section III), \cite{stong} (page $330$), and \cite{abp} (page $287$).

\section{$\m A(1)$ modules and $H^*\BPSp(3)$}

Recall that $\m A(1)$ is the subalgebra of $\m A$ generated by $\Sq^1$ and $\Sq^2$. It is easy to verify that $\m A(1)$ has basis \[\{1,\Sq^1,\Sq^2,\Sq^3,\Sq^{2,1},\Sq^{3,1},\Sq^{4,1}+\Sq^5,\Sq^{5,1}\}.\]
Here we have written elements in terms of $\Sq^I$ with admissible $I$, but note that we have several nontrivial relations: for example,
\begin{align*}
	\Sq^{1,2}&=\Sq^3\\
	\Sq^{2,2}&=\Sq^{3,1}\\
	\Sq^{2,1,2}&=\Sq^{2,3}=\Sq^{4,1}+\Sq^5\\
	\Sq^{2,2,2}&=\Sq^{2,1,2,1}=\Sq^{1,2,1,2}=\Sq^{5,1}.
\end{align*}

We can depict $\m A(1)$-modules as graphs with a node for each basis element, a short edge indicating left multiplication by $\Sq^1$, and a long edge indicating left multiplication by $\Sq^2$. Diagrammatically, we indicate degree with vertical position. In addition to the regular $\m A(1)$-module, we consider $\Z_2$ as a module concentrated in degree zero. Write $I$ for the augmentation ideal of $\m A(1)$. The inclusion $I\hookrightarrow\m A(1)$ and augmentation $\m A(1)\to\Z_2$ maps are of course maps of $\m A(1)$-modules. The other useful $\m A(1)$-modules are the ``joker'' $J\coloneqq \m A(1)/\m A(1)(\Sq^3)$ and the module $K\coloneqq \m A(1)/\m A(1)(\Sq^1,\Sq^2\Sq^3)$. These submodules are depicted in figure \ref{submodulesA}.


\newcommand{\xmin}{0}
\newcommand{\xmax}{13}
\newcommand{\ymin}{-1}
\newcommand{\ymax}{6}

\begin{figure}[h!]\centering
\begin{tikzpicture}
	\foreach \i in {\xmin,...,\xmax}{
	\draw [thin, gray] (\i-0.5,\ymin-0.5) -- (\i-0.5,\ymax+0.5);
	\draw node at (\i,\ymin-1) {$\i$};}
	\draw [thin, gray] (\xmax+0.5,\ymin-0.5) -- (\xmax+0.5,\ymax+0.5);
	
	\foreach \i in {\ymin,...,\ymax}{
	\draw [thin, gray] (\xmin-0.5,\i-0.5) -- (\xmax+0.5,\i-0.5);
	\draw node at (\xmin-1,\i) {$\i$};}
	\draw [thin, gray] (\xmin-0.5,\ymax+0.5) -- (\xmax+0.5,\ymax+0.5);
	
	\diagramAone{1}{0}
	\diagramI{4}{1}
	\diagramJ{8}{2}
	\diagramK{10}{0}
	\diagramZ{12}{0}

	\node at (2,-1) {$\m A(1)$};
	\node at (5,-1) {$I$};
	\node at (8,-1) {$\Sigma^2J$};
	\node at (10,-1) {$K$};
	\node at (12,-1) {$\Z_2$};
\end{tikzpicture}
\caption{Relevant submodules of $\m A(1)$.}
\label{submodulesA}
\end{figure}

A key computation in \cite{stolz} was the the $\m A(1)$-module structure of $H^*\BPSp(3)$. We revisit Stolz's computation beginning with the $\m A(1)$ action on $H^*\BPSp(3)$ (\cite{stolz}). We will rely  on Kono's determination in \cite{kono} that
\[H^*\BPSp(3)=\Z_2[t_2,t_3,t_8,t_{12}],\]
where $\deg t_i=i$. To compute the the actions of $\Sq^1$ and $\Sq^2$ on generators, Stolz used representations of the subgroup $\op{P}(\Sp(1)^3)\subset\PSp(3)$ using the geometric fact that $\Sp(1)$ naturally acts on $\H$ as the unit quaternions. He found that $\Sq^1$ and $\Sq^2$ act as follows:
\begin{align*}
	&\Sq^1(t_2)=t_3& &\Sq^1(t_3)=0& &\Sq^1(t_8)=0& &\Sq^1(t_{12})=0&\\
	&\Sq^2(t_2)=t_2^2& &\Sq^2(t_3)=t_2t_3& &\Sq^2(t_8)=0& &\Sq^2(t_{12})=t_2t_{12}.&
\end{align*}

\begin{lemma}\label{firstsplit}
	As $\m A(1)$-modules, $H^*\BPSp(3)\cong \Z_2[t_8,t_{12}^2]\T\left(\Z_2[t_2,t_3]\oplus \Z_2[t_2t_{12},t_3t_{12}]\right)$.
\end{lemma}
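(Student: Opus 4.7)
The plan is to exhibit $\Z_2[t_8, t_{12}^2]$ as an $\m A(1)$-trivial subring of $H^*\BPSp(3)$ over which multiplication is $\m A(1)$-linear, then split the complementary piece into two $\m A(1)$-submodules according to the parity of the exponent of $t_{12}$.

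First I would check that the subring $\Z_2[t_8,t_{12}^2]$ is annihilated by both generators of $\m A(1)$. On the generators this is immediate from the given action: $\Sq^1 t_8 = \Sq^2 t_8 = 0$, and the Cartan formula gives $\Sq^1(t_{12}^2)=2t_{12}\Sq^1(t_{12})=0$ together with $\Sq^2(t_{12}^2) = (\Sq^1 t_{12})^2+2t_{12}\Sq^2(t_{12})=0$ modulo $2$. A straightforward induction on monomial degree in $t_8$ and $t_{12}^2$, using the Cartan formula, extends this to all of $\Z_2[t_8,t_{12}^2]$. Because $\Sq^1$ and $\Sq^2$ generate $\m A(1)$, annihilation by these two suffices.

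Second, because elements $x\in\Z_2[t_8,t_{12}^2]$ satisfy $\Sq^1 x=\Sq^2 x=0$, the Cartan formula collapses to $\Sq^i(xy)=x\Sq^i(y)$ for $i=1,2$ and any $y\in H^*\BPSp(3)$. Thus multiplication by any element of $\Z_2[t_8,t_{12}^2]$ is an $\m A(1)$-module map, and tensor products over $\Z_2[t_8,t_{12}^2]$ make sense as $\m A(1)$-modules.

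Third, I would decompose $H^*\BPSp(3)=\Z_2[t_2,t_3,t_8,t_{12}]$ as a free $\Z_2[t_8,t_{12}^2]$-module by splitting the exponent $d$ of $t_{12}$ into its parity: every monomial $t_2^a t_3^b t_8^c t_{12}^d$ factors uniquely as $(t_8^c t_{12}^{2\lfloor d/2\rfloor})\cdot t_2^a t_3^b$ when $d$ is even, and as $(t_8^c t_{12}^{2\lfloor d/2\rfloor})\cdot t_{12}\cdot t_2^a t_3^b$ when $d$ is odd. This yields
\[H^*\BPSp(3)\cong \Z_2[t_8,t_{12}^2]\otimes\bigl(\Z_2[t_2,t_3]\oplus t_{12}\cdot\Z_2[t_2,t_3]\bigr)\]
as $\Z_2[t_8,t_{12}^2]$-modules; it remains to check the two vector-space summands are $\m A(1)$-stable. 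For $\Z_2[t_2,t_3]$, this is immediate from the formulas for $\Sq^i(t_2)$ and $\Sq^i(t_3)$ together with Cartan. For $t_{12}\cdot\Z_2[t_2,t_3]$, given $\omega\in\Z_2[t_2,t_3]$ one computes $\Sq^1(\omega t_{12})=\Sq^1(\omega)t_{12}$ (since $\Sq^1 t_{12}=0$) and $\Sq^2(\omega t_{12})=(\Sq^2\omega + t_2\omega)t_{12}$ (since $\Sq^1 t_{12}=0$ and $\Sq^2 t_{12}=t_2t_{12}$), both of which lie in $t_{12}\cdot\Z_2[t_2,t_3]$.

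The main subtlety — really the only non-routine point — is that the second summand $t_{12}\cdot\Z_2[t_2,t_3]$, written $\Z_2[t_2 t_{12},t_3 t_{12}]$ in the statement, is not merely a degree shift of $\Z_2[t_2,t_3]$: the action of $\Sq^2$ is twisted by the extra term $t_2\omega$ arising from $\Sq^2(t_{12})=t_2 t_{12}$. This twisted $\m A(1)$-module structure is precisely what the notation $\Z_2[t_2 t_{12},t_3 t_{12}]$ is meant to encode, since the submodule's $\m A(1)$-action is determined by how $\Sq^1$ and $\Sq^2$ act on the generators $t_2 t_{12}$ and $t_3 t_{12}$. Identifying and keeping track of this twisted structure (rather than treating the summand as a plain suspension of $\Z_2[t_2,t_3]$) is the bookkeeping that most demands care.
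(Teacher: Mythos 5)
Your proof is correct and takes essentially the same route as the paper: identify $\Z_2[t_8,t_{12}^2]$ as a trivial $\m A(1)$-submodule, check that $\Z_2[t_2,t_3]$ and $t_{12}\cdot\Z_2[t_2,t_3]$ are $\m A(1)$-stable via the Cartan formula, and then verify the underlying vector-space decomposition. The only (cosmetic) difference is in that last step, where the paper compares generating functions while you factor monomials directly by the parity of the exponent of $t_{12}$; your reading of the notation $\Z_2[t_2t_{12},t_3t_{12}]$ as the twisted module $t_{12}\cdot\Z_2[t_2,t_3]$ matches the paper's intent.
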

\begin{proof}
	The Cartan formula shows
	\[\Sq^1(xy)=\Sq^1(x)y+x\Sq^1(y)\and \Sq^2(xy)=\Sq^2(x)y+\Sq^1(x)\Sq^1(y)+x\Sq^2(y),\]
	hence to verify that $\Z_2[t_8,t_{12}^2]$ is an $\m A(1)$-submodule, it suffices to check that generators $\Sq^1$ and $\Sq^2$ take generators $t_8$ and $t_{12}^2$ to $\Z_2[t_8,t_{12}^2]$. This is immediate by Stolz's computation. The same is true for $\Z_2[t_2,t_3]$. Finally for $\Z_2[t_2t_{12},t_3t_{12}]$ we check
	\begin{align*}
		\Sq^1(t_2t_{12})&=\Sq^1(t_2)t_{12}+t_2\Sq^1(t_{12})=t_2t_{12}\\
		\Sq^2(t_2t_{12})&=t_2^2t_{12}+t_2^2t_{12}=0\\
		\Sq^1(t_3t_{12})&=0\\
		\Sq^2(t_3t_{12})&=t_2t_3t_{12}+t_2t_3t_{12}=0.
	\end{align*}
	Clearly we have an injection $\Z_2[t_8,t_{12}^2]\T\left(\Z_2[t_2,t_3]\oplus \Z_2[t_2,t_3]t_{12}\right)\to\Z_2[t_2,t_3,t_8,t_{12}]$. Since these modules are finite in each degree we can verify that all elements have been accounted for with generating functions. A generating function for $\Z_2[t_2,t_3,t_8,t_{12}]$ is
	\[\frac{1}{(1-x^2)(1-x^3)(1-x^8)(1-x^{12})}\].
	On the other hand, for $\Z_2[t_8,t_{12}^2]\T\left(\Z_2[t_2,t_3]\oplus \Z_2[t_2,t_3]t_{12}\right)$ we have the generating function
	\begin{align*}
	\frac{1}{(1-x^8)(1-x^{24})}&\left(\frac{1}{(1-x^2)(1-x^3)}+\frac{x^{12}}{(1-x^2)(1-x^3)}\right)\\ \\
	&\qquad=\frac{1}{(1-t^8)(1-t^{12})(1+t^{12})}\left(\frac{1+x^{12}}{(1-x^2)(1-x^3)}\right)\\ \\
	&\qquad=\frac{1}{(1-x^2)(1-x^3)(1-x^8)(1-x^{12})}
	\end{align*}
\end{proof}

Next we aim to write $\Z_2[t_8,t_{12}^2]$, $\Z_2[t_2,t_3]$ and $\Z_2[t_2t_{12},t_3t_{12}]$ in of $\m A(1)$, $\Z_2$, $I$, $J$, and $K$. The utility of \ref{firstsplit} is that $\Z_2[t_8,t_{12}]$ has trivial action by $\Sq^1$ and $\Sq^2$, hence is the direct sum of $\Z_2$ with a trivial module. This reduces our task to finding the $\m A(1)$-module structure of $\Z_2[t_2,t_3]$ and $\Z_2[t_2t_{12},t_3t_{12}]$. We refer the reader to \cite[proposition $6.5$]{stolz} for the remainder of the proof.


\begin{theorem}[Stolz lemma $7.6$]\label{2cond}
	Suppose $f\colon C\to D$ is a map of left $\m A(1)$-modules, where $C$ is a direct sum of suspensions of $\Z_2$, $J$, and $\m A(1)$, and $D$ is a direct sum of suspensions of $\Z_2$, $I$, $J$, $K$, and $\m A(1)$. If $f$ is injective and $f$ induces an injection $H(C;Q_0)\to H(D;Q_0)$, then $f$ is a split injection.
\end{theorem}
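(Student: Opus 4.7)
The plan is to construct a splitting $r\colon D\to C$ of $f$ by induction on $C$, peeling off one summand of $C$ at a time. At each step I would identify a summand $C_0\subseteq C$ and build a retract $r_0\colon D\to C_0$ of the composite $C_0\hookrightarrow C\xrightarrow{f} D$; replacing $D$ by $\ker r_0$ and $C$ by the complementary summand, I reduce to a smaller instance of the same hypothesis and iterate. The easy case is $C_0=\Sigma^n\m A(1)$: since $\m A(1)$ is a finite-dimensional Hopf algebra, it is Frobenius and therefore self-injective as a module over itself, so the inclusion $\Sigma^n\m A(1)\hookrightarrow D$ admits a retract automatically. I would dispense with all such summands of $C$ at the outset, so that in the remaining steps $C$ is a direct sum of suspensions of $\Z_2$ and $J$.

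Next I would split off each $\Sigma^n\Z_2$-summand. If $x=f(1)\in D_n$ is the image of a generator, then $\Sq^1 x=\Sq^2 x=0$ and $[x]\neq 0$ in $H(D;Q_0)_n$ by hypothesis. A direct enumeration shows that in each of the modules $I,J,K,\m A(1)$, the only elements killed by both $\Sq^1$ and $\Sq^2$ are $\Sq^{5,1}$ (in $I$ and in $\m A(1)$), $\Sq^{3,1}$ (in $J$), and $\Sq^3$ (in $K$); each of these is a $Q_0$-boundary in the corresponding module. Therefore the class $[x]\in H(D;Q_0)$ must be carried entirely by the $\Sigma^n\Z_2$-summands of $D$, so $x$ has nonzero component in at least one such summand $D_\beta$, and projection $D\to D_\beta\cong\Sigma^n\Z_2$ yields the required retract.

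The remaining case of $\Sigma^n J$-summands is the main obstacle. Set $g=f(1_J)\in D_n$; by injectivity of $f$, the five elements $g,\Sq^1 g,\Sq^2 g,\Sq^{2,1}g,\Sq^{3,1}g$ are linearly independent in $D$, the relation $\Sq^3 g=0$ holds, and $[\Sq^2 g]\neq 0$ in $H(D;Q_0)_{n+2}$. A case analysis across summand types of $D$ shows that only $\Sigma^n J$ and $\Sigma^n\m A(1)$ can receive an injective copy of $J$ in a way compatible with all of these constraints while still contributing nontrivially to $[\Sq^2 g]$: summands of type $\Sigma^m\Z_2$, $\Sigma^m I$, or $\Sigma^m K$ are either too small in total dimension or too restricted in their internal $\Sq^1,\Sq^2$ structure. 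I would then project $D$ onto such a summand (postcomposing with the quotient $\m A(1)\twoheadrightarrow J$ when applicable) to construct the retract. The delicate point is that $f(\Sigma^n J)$ may genuinely spread across multiple summands of $D$; the $Q_0$-homology hypothesis is what prevents the image from being absorbed entirely into $Q_0$-acyclic pieces and guarantees enough signal remains in a $J$- or $\m A(1)$-summand of $D$ for the projection to constitute a valid retract.
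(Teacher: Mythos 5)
The paper itself does not prove this statement; it is quoted verbatim as Lemma 7.6 of \cite{stolz}, so your argument has to stand on its own. Your overall strategy --- peel off the indecomposable summands of $C$, using self-injectivity of the Frobenius algebra $\m A(1)$ for the free summands and the $Q_0$-Margolis homology hypothesis to locate, for each $\Sigma^n\Z_2$- and $\Sigma^nJ$-summand of $C$, a summand of $D$ onto which projection gives a retraction --- is the standard route to this lemma and is essentially Stolz's. Your enumeration of $\ker\Sq^1\cap\ker\Sq^2$ in $I$, $J$, $K$, and $\m A(1)$ is correct, and it does force the class $[f(1)]$ of a $\Sigma^n\Z_2$-generator to be detected in a $\Sigma^n\Z_2$-summand of $D$.

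Two points need repair. First, in the $\Sigma^nJ$ step the summands $\Sigma^n\m A(1)$ of $D$ are a red herring: $\m A(1)$ is $Q_0$-acyclic ($H(\m A(1);Q_0)=0$), so it cannot contribute to $[\Sq^2g]\neq0$, and in any case the degree-$n$ component of $g=f(1_J)$ in such a summand must vanish because $\Sq^3g=0$ while $\Sq^3\cdot1\neq0$ in $\m A(1)$; your proposed ``postcompose with $\m A(1)\twoheadrightarrow J$'' would therefore produce the zero map, not a retract. The correct and cleaner statement is that a degree count ($H(\Sigma^m\Z_2;Q_0)$ sees no $\Sq^2$, $H(\Sigma^mI;Q_0)$ sits in degree $m+1$ and would force $g$'s component into $I_{-1}=0$, $H(\Sigma^mK;Q_0)$ sits in degree $m$ and would force it into $K_{-2}=0$) shows $[\Sq^2g]$ can only be detected in a summand $\Sigma^nJ$ of $D$, whose degree-$n$ part is one-dimensional; the component of $g$ there is then the generator, and cyclicity of $J$ makes the projection a retraction. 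Second, ``iterate'' hides two issues when $C$ has infinitely many summands: after splitting off the free part by injectivity, the complement of $f(F)$ in $D$ is not visibly a sum of the allowed modules (invoke Krull--Schmidt for bounded-below locally finite graded modules, or handle the free summands last), and the sequential peeling must be seen to converge (in each fixed degree only finitely many steps change anything, or replace the iteration by choosing simultaneously, degree by degree, a dual family to the linearly independent classes coming from the $\Z_2$- and $J$-summands of $C$). With these repairs the proof is complete.
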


\section{$E(1)$ modules and $H^*\BSU(3)$}

Let $E(1)$ be the subalgebra of $\m A$ generated by $Q_0=\Sq^1$ and $Q_1=\Sq^2\Sq^1+\Sq^3$. Note that $E(1)$ is a subalgebra of $\m A(1)$, and the inclusion $E(1)\to\m A(1)$ yields a restriction of scalars functor $\m A(1)\Lmod\Rightarrow E(1)\Lmod$ right adjoint to induction $\m A(1)\T_{E(1)}-\colon E(1)\Lmod\Rightarrow\m A(1)\Lmod$. As before we consider $\Z_2$ as a submodules of $E(1)$ concentrated in degree $0$. Let $L$ be the augmentation ideal of $E(1)$ and let $C=E(1)/E(1)\Sq^1$. These $E(1)$-modules are shown in figure \ref{submodulesE}

\newcommand{\xmina}{0}
\newcommand{\xmaxa}{8}
\newcommand{\ymina}{-1}
\newcommand{\ymaxa}{4}

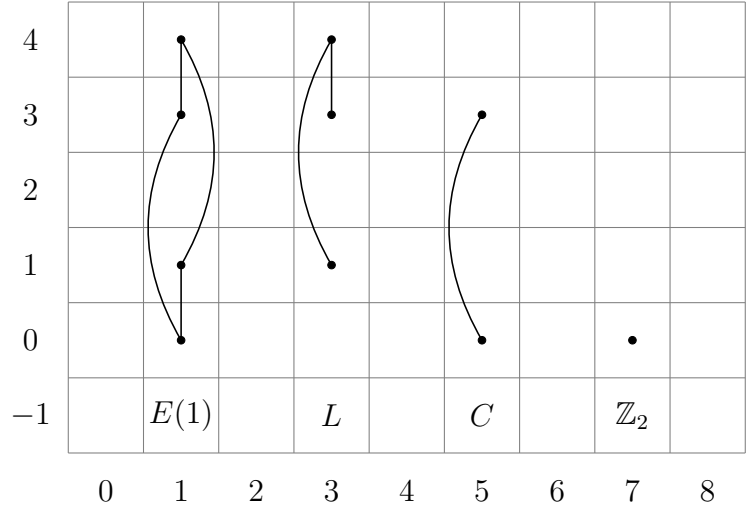
\begin{figure}[h!]\centering
\begin{tikzpicture}
	\foreach \i in {\xmina,...,\xmaxa}{
	\draw [thin, gray] (\i-0.5,\ymina-0.5) -- (\i-0.5,\ymaxa+0.5);
	\draw node at (\i,\ymina-1) {$\i$};}
	\draw [thin, gray] (\xmaxa+0.5,\ymina-0.5) -- (\xmaxa+0.5,\ymaxa+0.5);
	
	\foreach \i in {\ymina,...,\ymaxa}{
	\draw [thin, gray] (\xmina-0.5,\i-0.5) -- (\xmaxa+0.5,\i-0.5);
	\draw node at (\xmina-1,\i) {$\i$};}
	\draw [thin, gray] (\xmina-0.5,\ymaxa+0.5) -- (\xmaxa+0.5,\ymaxa+0.5);
	
	\diagramEone{1}{0}
	\diagramL{3}{0}
	\diagramC{5}{0}
	\diagramZ{7}{0}
	
	\node at (1,-1) {$E(1)$};
	\node at (3,-1) {$L$};
	\node at (5,-1) {$C$};
	\node at (7,-1) {$\Z_2$};
\end{tikzpicture}
\caption{Relevant submodules of $E(1)$.}
\label{submodulesE}
\end{figure}

\begin{lemma}
We have isomorphisms of $E(1)$-modules
\begin{align*}
	\m A(1)&\cong E(1)\oplus\Sigma^2 E(1)\\
	I&\cong L\oplus\Sigma^2E(1)\\
	J&\cong\Z_2\oplus\Sigma^{-2}E(1)\\
	K&\cong\Sigma\inv L.
\end{align*}
\end{lemma}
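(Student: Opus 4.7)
The plan is to prove each of the four isomorphisms by exhibiting explicit $E(1)$-generators inside the module, computing their $Q_0$- and $Q_1$-orbits via the Adem relations, and confirming by a degree-by-degree dimension count that we have a direct sum decomposition. Since $E(1)$ is the exterior algebra on $Q_0$ and $Q_1$, verifying that an element $x$ generates a free $E(1)$-summand reduces to checking that $\{x, Q_0 x, Q_1 x, Q_0 Q_1 x\}$ is linearly independent; a trivial $\Z_2$-summand is detected by $Q_0 x = Q_1 x = 0$. Throughout I use the identifications $Q_0 = \Sq^1$, $Q_1 = \Sq^3 + \Sq^{2,1}$, and $Q_0 Q_1 = \Sq^{3,1}$.

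For $\m A(1) \cong E(1) \oplus \Sigma^2 E(1)$, take the two generators $1$ and $\Sq^2$. The orbit of $1$ is $E(1)$ itself with basis $\{1, \Sq^1, \Sq^3 + \Sq^{2,1}, \Sq^{3,1}\}$. For $\Sq^2$ one computes $Q_0 \Sq^2 = \Sq^3$, then $Q_1 \Sq^2 = \Sq^2\Sq^3 = \Sq^5 + \Sq^{4,1}$ (using $\Sq^3\Sq^2 = 0$ and the Adem expansion of $\Sq^{2,3}$), and $Q_0 Q_1 \Sq^2 = \Sq^{5,1}$, producing a free summand in degrees $2, 3, 5, 6$. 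The two bases together account for all $8$ basis elements of $\m A(1)$ with trivial pairwise intersection in each degree, so the decomposition is direct. The case $I \cong L \oplus \Sigma^2 E(1)$ is then immediate: simply strip off the degree-zero copy of $\Z_2$ from the $E(1)$ summand in the preceding decomposition.

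For $J = \m A(1)/\m A(1)\Sq^3$, I first compute $\m A(1)\Sq^3 = \Z_2\{\Sq^3,\ \Sq^{4,1}+\Sq^5,\ \Sq^{5,1}\}$, so $J$ has basis $\{\bar 1, \bar{\Sq^1}, \bar{\Sq^2}, \bar{\Sq^{2,1}}, \bar{\Sq^{3,1}}\}$. Then $\bar 1$ generates a free $E(1)$-summand: since $\bar{\Sq^3} = 0$, we get $Q_1 \bar 1 = \bar{\Sq^{2,1}}$, along with $Q_0 \bar 1 = \bar{\Sq^1}$ and $Q_0 Q_1 \bar 1 = \bar{\Sq^{3,1}}$. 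The leftover $\bar{\Sq^2}$ is annihilated by $Q_0$ (since $\Sq^1\Sq^2 = \Sq^3 \in \m A(1)\Sq^3$) and by $Q_1$ (since $Q_1\Sq^2 = \Sq^{2,3} = \Sq^{4,1}+\Sq^5 \in \m A(1)\Sq^3$), giving a trivial $\Z_2$-summand in degree $2$ relative to the generator. After applying the overall shift used in the paper's figure (where the symmetric joker is drawn centered about degree $0$), this produces the stated $\Z_2 \oplus \Sigma^{-2} E(1)$.

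For $K = \m A(1)/\m A(1)(\Sq^1, \Sq^2\Sq^3)$, a direct computation identifies the quotient ideal as $\Z_2\{\Sq^1,\Sq^{2,1},\Sq^{3,1},\Sq^{4,1}+\Sq^5,\Sq^{5,1}\}$, leaving $K$ with basis $\{\bar 1, \bar{\Sq^2}, \bar{\Sq^3}\}$ in degrees $0, 2, 3$. Checking the action gives $Q_0 \bar 1 = 0$, $Q_1 \bar 1 = \bar{\Sq^3}$, $Q_0 \bar{\Sq^2} = \bar{\Sq^3}$, $Q_1 \bar{\Sq^2} = 0$, and both operators annihilate $\bar{\Sq^3}$. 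This is precisely the structure of $L$ shifted down by one, under the correspondence $\bar 1 \leftrightarrow \Sigma^{-1}Q_0$, $\bar{\Sq^2} \leftrightarrow \Sigma^{-1}Q_1$, $\bar{\Sq^3} \leftrightarrow \Sigma^{-1}Q_0Q_1$. The only real obstacle across all four parts is the bookkeeping through the Adem relations; no conceptual difficulty arises because everything is finite-dimensional and the $E(1)$-action is fully determined by two generators.
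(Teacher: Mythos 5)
Your computations are correct, and since the paper states this lemma without proof, your explicit degree-by-degree verification via the Adem relations (checking that $\{x,Q_0x,Q_1x,Q_0Q_1x\}$ is independent for each claimed free generator and counting dimensions) is precisely the routine check being left to the reader; I verified each of the four cases, including the ideal $\m A(1)(\Sq^1,\Sq^2\Sq^3)=\Z_2\{\Sq^1,\Sq^{2,1},\Sq^{3,1},\Sq^{4,1}+\Sq^5,\Sq^{5,1}\}$ and the identification $\bar 1\mapsto Q_0$, $\bar{\Sq^2}\mapsto Q_1$, $\bar{\Sq^3}\mapsto Q_0Q_1$ for $K\cong\Sigma^{-1}L$. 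The only judgment call is the suspension convention for $J$: the text's literal definition $\m A(1)/\m A(1)\Sq^3$ lives in degrees $0$ through $4$ and decomposes as $E(1)\oplus\Sigma^2\Z_2$, while the lemma and the figure (which labels the centered joker as $\Sigma^2J$) use the desuspended copy; your shift by $\Sigma^{-2}$ resolves this consistently with both.
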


\begin{lemma}[Stolz proposition $6.5$]\label{bpsp}
	As an $\m A(1)$-module, $H^*\BPSp(3)$ is the direct sum of a free module with a direct sum of $8i$-fold suspensions of $\Z_2$, $\Sigma^{-1}I$, $\Sigma^4J$, and $\Sigma^4K$.
\end{lemma}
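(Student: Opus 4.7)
The plan is to build on Lemma \ref{firstsplit}, which decomposes
\[H^*\BPSp(3) \;\cong\; \Z_2[t_8, t_{12}^2] \T \bigl(\Z_2[t_2, t_3] \oplus \Z_2[t_2 t_{12}, t_3 t_{12}]\bigr)\]
as $\m A(1)$-modules. First I would verify that $\Z_2[t_8, t_{12}^2]$ carries the trivial $\m A(1)$-action: Stolz's computation gives $\Sq^1 t_8 = \Sq^2 t_8 = \Sq^1 t_{12} = 0$, and the Cartan formula yields $\Sq^1(t_{12}^2) = 2 t_{12}\Sq^1 t_{12} = 0$ and $\Sq^2(t_{12}^2) = (\Sq^1 t_{12})^2 = 0$ modulo $2$. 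Propagating through Cartan, $\Sq^1$ and $\Sq^2$ annihilate every monomial in $\Z_2[t_8, t_{12}^2]$, so this factor decomposes as a direct sum of trivial summands $\Sigma^{8i}\Z_2$, one for each monomial $t_8^a t_{12}^{2b}$ of total degree $8(a + 3b) = 8i$. Consequently, tensoring any $\m A(1)$-module $N$ with $\Z_2[t_8, t_{12}^2]$ yields a direct sum of $8i$-fold suspensions of $N$; this is precisely the form of suspension permitted by the claim.

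Second I would decompose the two remaining factors $\Z_2[t_2, t_3]$ and $\Z_2[t_2 t_{12}, t_3 t_{12}]$ as $\m A(1)$-modules. In each case the strategy is to split off the degree-$0$ (respectively, lowest-degree) summand, then identify the $\m A(1)$-orbit of the next generator and match its module diagram against Figure \ref{submodulesA}. For $\Z_2[t_2, t_3]$ the unit contributes a copy of $\Z_2$; the actions $\Sq^1 t_2 = t_3$, $\Sq^2 t_2 = t_2^2$, $\Sq^2 t_3 = t_2 t_3$ from Lemma \ref{firstsplit} determine the orbit of $t_2$, which I would recognize as one of the allowed indecomposables (or as fitting into a free $\m A(1)$-summand) by drawing its diagram and comparing with Figure \ref{submodulesA}. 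Proceeding to higher-degree generators $t_2^a t_3^b$, I would peel off further summands one at a time. As a bookkeeping check I would compare against the Poincar\'e series
\[\sum_{n \ge 0} \dim_{\Z_2}\bigl(\Z_2[t_2, t_3]\bigr)_n\, x^n \;=\; \frac{1}{(1-x^2)(1-x^3)}\]
(and the analogous identity for $\Z_2[t_2 t_{12}, t_3 t_{12}]$) to ensure no summand is missed and multiplicities match. The $\Sq^1, \Sq^2$-actions on the generators $t_2 t_{12}$, $t_3 t_{12}$ of the second factor were already computed in the proof of Lemma \ref{firstsplit}, and Cartan extends them to all products, reducing this factor to the same kind of combinatorial matching as the first.

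The main obstacle is the second step: explicitly identifying the indecomposable $\m A(1)$-summands of these polynomial rings and confirming that the non-free ones lie in the restricted list $\{\Z_2,\, \Sigma^{-1}I,\, \Sigma^4 J,\, \Sigma^4 K\}$ rather than (say) a summand isomorphic to $I$ itself or some other indecomposable. Since the $\Sq^1, \Sq^2$ actions are explicit and the candidate indecomposables in Figure \ref{submodulesA} have small total dimension, this reduces to a finite combinatorial matching, but it requires careful attention because short orbits of single generators (like that of $t_2$) do not always match one of $\Z_2$, $I$, $J$, $K$, or $\m A(1)$ directly --- one sometimes has to combine the orbit with contributions from multiplication by $t_{12}$ (living in the trivial tensor factor) before a clean indecomposable splits off. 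Once the decompositions of $\Z_2[t_2, t_3]$ and $\Z_2[t_2 t_{12}, t_3 t_{12}]$ are in hand, tensoring back with $\Z_2[t_8, t_{12}^2]$ promotes each summand to a family of $8i$-fold suspensions, yielding the decomposition asserted in the lemma.
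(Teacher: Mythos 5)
Your first step coincides exactly with what the paper does: Lemma \ref{firstsplit} plus the observation that $\Sq^1$ and $\Sq^2$ annihilate $t_8$ and $t_{12}^2$, so that $\Z_2[t_8,t_{12}^2]$ is a trivial $\m A(1)$-module concentrated in degrees $8(a+3b)$ and tensoring with it converts any decomposition of the remaining factors into $8i$-fold suspensions. That much is correct, and it is essentially all the paper itself supplies; for the actual $\m A(1)$-module decomposition of $\Z_2[t_2,t_3]$ and $\Z_2[t_2,t_3]t_{12}$ the paper refers the reader to Stolz's proposition $6.5$. (One caution: the statement of Lemma \ref{firstsplit} writes the second summand as $\Z_2[t_2t_{12},t_3t_{12}]$, but the generating-function check in its proof shows the intended module is $\Z_2[t_2,t_3]t_{12}$; if you literally compute ``the analogous identity'' for the ring $\Z_2[t_2t_{12},t_3t_{12}]$, your Poincar\'e-series bookkeeping will not close.)

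The genuine gap is in your second step. Peeling off the $\m A(1)$-orbit of a generator and matching its diagram against Figure \ref{submodulesA} only identifies cyclic \emph{sub}modules; a cyclic submodule of an $\m A(1)$-module need not be a direct summand, so even after recognizing the orbit of $t_2$ as one of $\Z_2$, $I$, $J$, $K$, or $\m A(1)$ you still owe a retraction. Moreover the process is not finite: $\Z_2[t_2,t_3]$ is infinite-dimensional, so after splitting off the low-degree non-free pieces you must prove that the entire infinite-dimensional complement is free, which a degreewise dimension count cannot establish. The standard way to close both gaps --- and the way Stolz's argument runs --- is via Margolis homology: compute $H(-;Q_0)$ and $H(-;Q_1)$ of $\Z_2[t_2,t_3]$ and $\Z_2[t_2,t_3]t_{12}$, check that they are exactly accounted for by the claimed summands $\Z_2$, $\Sigma^{-1}I$, $\Sigma^4J$, $\Sigma^4K$, and invoke the criterion that a bounded-below, locally finite $\m A(1)$-module with vanishing $Q_0$- and $Q_1$-Margolis homology is free (the same circle of ideas as Theorem \ref{2cond}). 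Your proposal never mentions Margolis homology or any freeness/splitting criterion, so the decisive step remains unjustified as written.
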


\begin{corollary}
	As an $E(1)$-module, $H^*\BPSp(3)$ is a direct sum of $4i$-fold suspensions of $\Z_2$ and $\Sigma^{-1}L$.
\end{corollary}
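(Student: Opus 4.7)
The plan is to take Stolz's $\m A(1)$-module decomposition of $H^*\BPSp(3)$ from Lemma \ref{bpsp} and restrict scalars along the inclusion $E(1)\hookrightarrow \m A(1)$, using the isomorphisms $\m A(1)\cong E(1)\oplus\Sigma^2 E(1)$, $I\cong L\oplus\Sigma^2 E(1)$, $J\cong\Z_2\oplus\Sigma^{-2}E(1)$, and $K\cong\Sigma^{-1}L$ from the preceding lemma.

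First I would substitute: the free $\m A(1)$-summand becomes a direct sum of copies of $E(1)$ and $\Sigma^2 E(1)$, and the other summands decompose as
\begin{align*}
  \Sigma^{8i}\Z_2 &\ \text{stays a trivial summand in degree } 8i,\\
  \Sigma^{8i-1}I &\cong \Sigma^{8i-1}L\oplus \Sigma^{8i+1}E(1) \cong \Sigma^{8i}(\Sigma^{-1}L)\oplus\Sigma^{8i+1}E(1),\\
  \Sigma^{8i+4}J &\cong \Sigma^{8i+4}\Z_2\oplus \Sigma^{8i+2}E(1),\\
  \Sigma^{8i+4}K &\cong \Sigma^{8i+3}L \cong \Sigma^{8i+4}(\Sigma^{-1}L).
\end{align*}

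Next I would collect terms by isomorphism type. The trivial $\Z_2$-summands appear in the degrees $\{8i\}\cup\{8i+4\}$, which is exactly the set of nonnegative multiples of $4$, so they form a direct sum of $4i$-fold suspensions of $\Z_2$. Likewise, the copies of $\Sigma^{-1}L$ that come out of the $I$ and $K$ pieces are indexed by $\{8i\}\cup\{8i+4\}$, giving $4i$-fold suspensions of $\Sigma^{-1}L$. All remaining pieces are suspensions of $E(1)$ and may be absorbed into the free part (suppressed in the statement, matching the convention of Lemma \ref{bpsp} where the free $\m A(1)$-summand is separated off).

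The only nontrivial step is the combinatorial check that the two arithmetic progressions $8i$ and $8i+4$ interleave exactly into the multiples of $4$; there is no homological obstacle because the $E(1)$-level splittings of $\m A(1)$, $I$, $J$, $K$ are supplied by the previous lemma. Consequently the main obstacle, such as it is, lies entirely in the earlier $\m A(1)$-computation (Stolz's Proposition 6.5), not in the passage from $\m A(1)$-modules to $E(1)$-modules carried out here.
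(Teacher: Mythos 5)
Your proposal is correct and is exactly the paper's argument: substitute the $E(1)$-decompositions of $\m A(1)$, $I$, $J$, $K$ into Stolz's $\m A(1)$-splitting from Lemma \ref{bpsp}, absorb the resulting $E(1)$-free pieces, and observe that the trivial and $\Sigma^{-1}L$ summands land in suspensions $8i$ and $8i+4$, which together are the multiples of $4$. Your write-up is in fact more explicit than the paper's one-sentence proof, which stops at listing the $8i$-fold suspensions of $\Z_2$, $\Sigma^{-1}L$, $\Sigma^4\Z_2$, and $\Sigma^3L$ without spelling out the final regrouping.
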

\begin{proof}
	Thus lemma \ref{bpsp} shows that $H^*\BPSp(3)$ is, as an $E(1)$-module, the direct sum of a free module with a direct sum of $8i$-fold suspensions of $\Z_2$, $\Sigma^{-1}L$, $\Sigma^4\Z_2$, and $\Sigma^3L$.
\end{proof}

As an algebra, $H^*\BSU(3)$ is the polynomial ring $\Z_2[y_4,y_6]$, where $\deg(y_i)=i$. In theorem \ref{sqc} we compute the actions by $\Sq^1$ and $\Sq^2$, and we summarize the result here: $\Sq^1(y_6)=\Sq^2(y_6)=\Sq^1(y_4)=0$ and $\Sq^2(y_4)=y_6$. Hence $\Z_2[y_4^2,y_6]$ is an $E(1)$-submodule of $H^*\BSU(3)$ with trivial action. We can now make a better description of $H^*\BSU(3)$.

\begin{theorem}
	As an $E(1)$-module, $H^*\BSU(3)\cong \Z_2[y_4^2]\oplus C\T_{\Z_2}\Z_2[y_4^2,y_6]y_4$. Here $\Z_2[y_4^2]$ and $\Z_2[y_4^2,y_6]y_4$ are trivial $E(1)$-modules which may be considered as submodules of $\Z_2[y_4,y_6]\cong H^*\BSU(3)$.
\end{theorem}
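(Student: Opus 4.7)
The plan is to mirror the strategy Stolz uses in Lemma \ref{firstsplit} for the $\m A(1)$-module decomposition of $H^*\BPSp(3)$. The key inputs are the Steenrod squares on $y_4$ and $y_6$ recorded in Theorem \ref{sqc}, together with the fact that both $Q_0 = \Sq^1$ and $Q_1 = \Sq^3 + \Sq^2\Sq^1$ act as derivations on the commutative $\Z_2$-algebra $H^*\BSU(3) = \Z_2[y_4,y_6]$: the former because it is the Bockstein, and the latter by a short expansion using the Cartan formula together with $\Sq^1\Sq^1 = 0$.

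First I would verify that $\Z_2[y_4^2]$ is a trivial $E(1)$-submodule by computing $Q_0 y_4^2 = 2 y_4 \Sq^1 y_4 = 0$ and $Q_1 y_4^2 = 2 y_4\, Q_1 y_4 = 0$ from the derivation property, using $\Sq^1 y_4 = 0$ and $Q_1 y_4 = \Sq^3 y_4 = \Sq^1 \Sq^2 y_4 = \Sq^1 y_6 = 0$. Similarly I would check that $\Z_2[y_4^2, y_6]y_4$ is a trivial $E(1)$-submodule by applying the Leibniz rule to the generating monomials $y_4^{2a+1} y_6^b$, with base cases $Q_0 y_6 = 0$ and $Q_1 y_6 = 0$. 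These two identifications establish the right-hand side as a well-defined $E(1)$-module, with the $E(1)$-action on $C \T_{\Z_2} \Z_2[y_4^2, y_6]y_4$ coming entirely from the $C$ factor.

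Next I would construct a comparison map $\Phi$ from the right-hand side to $H^*\BSU(3)$ by sending each $\Z_2$-generator $1 \T m$ to the monomial $m$ itself, and sending each class $Q_1 \T m$ to the class in $H^*\BSU(3)$ of total degree $\deg m + 3$ that matches the $Q_1$-action on the target. By construction $\Phi$ respects $Q_0$ trivially and $Q_1$ by the matching of target classes. Bijectivity then reduces to the Poincaré series identity
\[\frac{1}{(1-t^4)(1-t^6)} = \frac{1}{1-t^8} + (1+t^3)\cdot\frac{t^4}{(1-t^8)(1-t^6)}.\]

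The main obstacle is the explicit choice of target for each class $Q_1 \T m$: one must specify which element of $H^*\BSU(3)$ sits in the correct total degree, and verify that these choices collectively exhaust $H^*\BSU(3)$ while remaining compatible with the full $E(1)$-action. A careful degree-by-degree bookkeeping of the monomial basis, combined with the Poincaré series identity above, should then complete the proof via a splitting argument analogous to Theorem \ref{2cond} in the $\m A(1)$ setting.
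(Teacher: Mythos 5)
Your strategy mirrors the paper's (trivial submodules, a comparison map, a Poincar\'e series count), but the degree bookkeeping collapses, and your own preliminary computations already show why. You correctly find $Q_0y_4=Q_1y_4=Q_0y_6=Q_1y_6=0$; since $Q_0$ and $Q_1$ are derivations --- or simply because both have odd degree while $H^*\BSU(3)=\Z_2[y_4,y_6]$ is concentrated in even degrees --- this forces $Q_0$ and $Q_1$ to vanish on \emph{all} of $H^*\BSU(3)$. Hence there is no nonzero class in total degree $\deg m+3$ for $\Phi(Q_1\T m)$ to hit: $\Phi$ must annihilate every class $Q_1\T m$, so it cannot be injective, and more structurally no $E(1)$-map from $C\T_{\Z_2}(\text{nonzero})$ to an $E(1)$-trivial module is injective, because $Q_1$ acts nontrivially on the source. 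Your Poincar\'e series identity is also false: the coefficient of $t^6$ is $1$ on the left (the monomial $y_6$) but $0$ on the right, and the right-hand side has a spurious $t^7$ term coming from $t^3\cdot t^4$. The argument therefore fails exactly at the step you flagged as the main obstacle.

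The paper's proof sidesteps this by not using $C$ with its top class in degree $3$: its generating function assigns $1+t^2$ to the $C$-factor, and its explicit bijection sends $\theta\T x$ to $\theta\,y_4\inv y_6$, a degree-$2$ shift. That shift is realized by $\Sq^2$ (by the Cartan formula, $\Sq^2(y_4^{2a+1}y_6^b)=y_4^{2a}y_6^{b+1}$), so what is actually being decomposed is the module structure over $\langle\Sq^1,\Sq^2\rangle$ rather than over $E(1)=\langle Q_0,Q_1\rangle$ as literally written; note too that the paper's proof only concludes an isomorphism of graded vector spaces. To repair your write-up, either prove the true and easier statement that $H^*\BSU(3)$ is a trivial $E(1)$-module (a direct sum of suspensions of $\Z_2$ with Poincar\'e series $1/((1-t^4)(1-t^6))$), which is what actually feeds into the change-of-rings argument, or reformulate the splitting in terms of the $\Sq^2$-action as the paper's computation implicitly does.
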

\begin{proof}
	Knowing the actions of $\Sq^1$ and $\Sq^2$ on generators $y_4$ and $y_6$, it is easy to verify $\Z_2[y_4^2]$ is a trivial submodule of $\Z_2[y_4,y_6]$. As vector spaces,
	\[\Z_2[y_4,y_6]\cong\Z_2[y_4^2]\oplus\Z_2[y_4^2,y_6]y_4\T C.\]
	To see this, observe that a generating function in $t$ for the right hand side is
	\begin{align*}
	\frac{1}{1-t^8}+\frac{t^4(1+t^2)}{(1-t^8)(1-t^6)}&=\frac{1-t^6+t^4(1+t^2)}{(1-t^4)(1+t^4)(1-t^6)}\\
	&=\frac{1}{(1-t^4)(1-t^6)}.
	\end{align*}
	To make this explicit, momentarily write $\{1,x\}$ for a $\Z_2$-basis of $C$ and define a map
	\[\p\colon\Z_2[y_4^2]\oplus\Z_2[y_4^2,y_6]y_4\T C\to\Z_2[y_4,y_6] \]
	by extending linearly from $\p(\theta_1,\theta_2\T 1)=\theta_1+\theta_2$ and $\p(\theta_1,\theta_2\T x)=\theta_1+\theta_2y_4\inv y_6$. Since the domain and codomain of $\p$ are finite dimensional in each grading, $\p$ is a linear isomorphism provided $\p$ is surjective. Elements of the form $y_4^{2m}y_6^n$ are obtained as $\p(0,y_4^{2m}y_6^{n-1}y_4\T x)$ if $n\ge 1$ and $\p(y_4^{2m},0)$ if $n=0$. For odd powers of $y_4$, observe $y_4^{2m+1}y_6^n=\p(0,y_4^{2m+1}y_6^n\T1)$. Hence $\p$ is an isomorphism of graded vector spaces
\end{proof}

\section{$\ko$ and $\ku$}

Let $\ko$ be the connective cover of $\KO$, meaning there is a map $\ko\to \KO$ inducing isomorphisms $\pi_n\ko\to\pi_n\KO$ for $n\ge 0$ and where $\pi_n\ko=0$ for $n<0$. Similarly one may define $\ku$ as the connective cover of the complex K-theory spectrum $\K$. Adams notes that one may take the zeroth space $\ku$ to be $\Z\times\BU$, hence $\ku^0(X)=\K^0(X)$ for any CW complex space $X$; in general the higher cohomology groups may differ. The similar statements apply for $\ku$. The $\A$ and $\AC$ genus correspond to spectrum maps $\MSpin\to\ko$ and $\MSpinc\to\ku$, and for this reason we care about the $\m A$-module structures of $\ko$ and $\ku$.

As an $\m A$-module, we can identify $H^*\ko=\m A\hmod\m A(1)$ and $H^*\ku=\m A\hmod E(1)$. Thus $H_*\ko$ and $H_*\ku$ are as given in lemma \ref{dualquotients}. We prove the result for $\ku$ here, following a proof by Bruner (\cite{bruner}) which in turn was adapted from Adams: for further reference, see proposition $16.6$ of section III of \cite{adams}.

\begin{lemma}
	As an $\m A$-module, $H^*\ku\cong\m A\hmod E(1)$.
\end{lemma}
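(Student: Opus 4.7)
My plan is to construct an $\m A$-module map $\m A\hmod E(1)\to H^*\ku$ from the fundamental class and verify it is an isomorphism using the Bott cofiber sequence.

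The ring-spectrum unit $S\to\ku$ determines a fundamental class $\iota\in H^0\ku\cong\Z_2$, giving an $\m A$-module map $\m A\to H^*\ku$, $a\mapsto a\cdot\iota$. This descends to a map out of $\m A\hmod E(1)$ as soon as $Q_0\iota=0$ and $Q_1\iota=0$. The first identity is immediate: since $\pi_0\ku=\Z$ is torsion-free, $\iota$ lifts along $H\Z\to H\Z_2$ to an integral class, and $Q_0=\Sq^1$, being the mod-$2$ Bockstein, vanishes on any integrally-lifted class. For $Q_1=\Sq^2\Sq^1+\Sq^3$, the first summand annihilates $\iota$ by the above, so it remains to show $\Sq^3\iota=0$; this will drop out of the computation below as $H^3\ku=0$.

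For the isomorphism I would use the Bott cofiber sequence
\begin{equation*}
\Sigma^2\ku\xrightarrow{v}\ku\xrightarrow{c}H\Z,
\end{equation*}
where $v$ denotes multiplication by the Bott class; the cofiber identifies with $H\Z$ because $\pi_*(\ku/v\ku)=\Z$ concentrated in degree zero. Applying $H^*(-;\Z_2)$ yields a long exact sequence
\begin{equation*}
\cdots\to H^{n-3}\ku\xrightarrow{v^*}H^nH\Z\xrightarrow{c^*}H^n\ku\to H^{n-2}\ku\to\cdots.
\end{equation*}
Using $H^*H\Z=\m A\hmod\m A(0)$ as a known input, I would compute $H^n\ku$ degree by degree: the low-dimensional piece gives $H^0\ku=\Z_2\{\iota\}$, $H^1\ku=0$, $H^2\ku=\Z_2\{\Sq^2\iota\}$, and $H^3\ku=0$, which verifies the remaining bit of the $Q_1\iota=0$ claim. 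The long exact sequence then exhibits the Poincar\'e series of $H^*\ku$ as satisfying exactly the recursion one obtains for $\m A\hmod E(1)$ from the short exact sequence of $\m A$-modules
\begin{equation*}
0\to\Sigma^3(\m A\hmod E(1))\to\m A\hmod\m A(0)\to\m A\hmod E(1)\to 0
\end{equation*}
induced by multiplication by $Q_1$. Combined with matching in degree zero, a degreewise dimension comparison forces the generator-preserving map $\m A\hmod E(1)\to H^*\ku$ to be a bijection.

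The principal obstacle is the setup: the Bott cofiber sequence at the prime $2$ and the algebraic short exact sequence of $\m A$-modules both require some care, particularly tracking left-versus-right module conventions when identifying $\m A\hmod E(1)=\Z_2\T_{E(1)}\m A$ as a concrete quotient of $\m A$. Once these are cleanly in place, the rest of the argument is a straightforward matching of Poincar\'e series, and the fact that the map takes the generator $1$ to $\iota$ promotes this to an isomorphism of $\m A$-modules.
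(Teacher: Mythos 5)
Your overall architecture is the same as the paper's: both proofs run the Bott cofiber sequence $\Sigma^2\ku\to\ku\to H\Z\to\Sigma^3\ku$ against the algebraic exact sequence $0\to \m A/(\Sq^1,\Sq^3)\xrightarrow{\cdot\Sq^3}\m A/\m A\Sq^1\to\m A/(\Sq^1,\Sq^3)\to 0$. However, there is a genuine gap at the step where you claim the long exact sequence ``exhibits the Poincar\'e series of $H^*\ku$ as satisfying exactly the recursion'' for $\m A\hmod E(1)$. The long exact sequence
\[\cdots\to H^{n-3}\ku\xrightarrow{k^*}H^nH\Z\xrightarrow{c^*}H^n\ku\xrightarrow{v^*}H^{n-2}\ku\to\cdots\]
only yields $\dim H^n\ku=\dim\operatorname{coker}(k^*)_n+\dim\ker(k^*)_{n-2}$, so the dimensions are not determined until you control the connecting map $k^*$. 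In the extreme where $k^*=0$ you would get $a_n=b_n+a_{n-2}$ rather than the desired $a_n=b_n-a_{n-3}$, and nothing in your argument rules this out in degrees above $3$; even your computation of $H^2\ku$ already secretly uses that $k^*$ is nonzero on the fundamental class. Your low-degree checks do not propagate, because knowing $k^*$ on the fundamental class pins down all of $k^*$ only if $H^*\Sigma^3\ku$ is cyclic over $\m A$ --- which is part of what is being proved.

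The paper closes exactly this gap: it computes the transgression in the space-level fibration $\SU\langle5\rangle\to\SU\to K(\Z,3)$ to get $\tau(e_5)=\Sq^3\iota_3$, hence $k^*\omega_3=\Sq^3u$, and then runs a five-lemma induction up the commutative ladder comparing the topological long exact sequence with the algebraic one; the induction simultaneously establishes that $c^*$ is surjective (equivalently $v^*=0$ on mod-$2$ cohomology) in each degree and that the map $\m A/(\Sq^1,\Sq^3)\to H^*\ku$ is an isomorphism. An alternative patch within your framework: if you first proved that $\phi\colon\m A\to H^*\ku$, $a\mapsto a\iota$, is surjective (i.e.\ $H^*\ku$ is cyclic), then the inequality $\dim H^n\ku\ge b_n-\dim H^{n-3}\ku$ from exactness, combined with the upper bound from surjectivity, would close the induction --- but cyclicity is not automatic (it fails, e.g., for $H\Z\vee\Sigma^2H\Z$) and must itself be extracted from the cofiber sequence, which brings you back to identifying $k^*$.
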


\begin{proof}
We begin with the fibration induced by $\ku\to H\Z$. The total space is $\Sigma^2\ku$, giving a cofibration which extends to the right as in $\Sigma^3\xrightarrow{i}\ku\xrightarrow{j}H\Z\xrightarrow{k}\Sigma^3\ku$. We compare the two fiber sequences below.

\begin{figure}[h!]\centering
	\begin{tikzcd}
		& \Sigma^2\ku\ar[r] & \ast\ar[d]\\
		\Sigma^2\ku\ar[r,"i" above]\ar[ur,dashed] & \ku\ar[d,"j" left]\ar[ur,dashed] & \Sigma^3\ku\\
		& H\Z\ar[ur,"k" below right,dashed] &  
	\end{tikzcd}
\end{figure}

Write $\omega_n\in H^n\Sigma^n\ku$ for the fundamental cohomology class. The transgression in the upper right fibration is $\tau(\omega_2)=\omega_3$. It follows that $k^*\omega_3=\tau\omega_2$. We compute $\tau(\omega_2)$ by looking at the third space in each spectrum. Recall that
\[\ku=(\BU\times\Z,\U,\BU,\SU,\BSU,\SU\langle5\rangle,\SU\langle6\rangle,\ldots)\]
and
\[H\Z=(\Z,S^1,\CP^\infty,K(\Z,3),K(\Z,4),\ldots).\]
Restricting to the third spaces gives $\SU\langle5\rangle\to\SU\to K(\Z,3)$. Write $\I_3\in H^3K(\Z,3)$ for the fundamental class. We also know that $H^*\SU=\W(e_3,e_5,e_7,\ldots)$. Since $\SU\langle5\rangle$ is $4$-connected, the first nonzero cohomology group $H^5\SU\langle5\rangle=H^5\SU$ is generated by $e_5$. We also know $H^*K(\Z,3)$ (for example using $\CP^\infty\to \ast\to K(\Z,3)$): the first several generators are $1,\I_3,\Sq^2\I_3$, and $\Sq^3\I_3=\I_3^2$. Since $H^5\SU$ is one-dimensional (and generated by $e_5$), we necessarily have the transgression $\tau(e_5)=\Sq^3\I_3$. Now we turn to the cofibration $\ku\to H\Z\to \Sigma^3\ku$. Write $u\in H^0H\Z$ for the fundamental class. We have an isomorphism $\A\colon\m A/\m A\Sq^1\to H^*H\Z$ given by $\A(\theta)=\theta u$ and a diagram
\begin{figure}[h!]\centering
	\begin{tikzcd}
		& & \m A/\m A\Sq^1\ar[d,"\A"] & &\\
		0 \ar[r] & H^*\Sigma^3\ku\ar[r] & H^*(H\Z)\ar[r,"j^*"] & H^*\ku\ar[r,"i^*"] & 0
	\end{tikzcd}
\end{figure}
In addition, we have the exact sequence
\[0\to \m A/(\Sq^1,\Sq^3)\xrightarrow{f} \m A/(\Sq^1)\xrightarrow{g} \m A/(\Sq^1,\Sq^3)\to 0 \]
where $f(x)=x\Sq^3$ and $g$ is the quotient: note that $f$ is well defined since $\Sq^3\Sq^3=0$ and $\Sq^1\Sq^3=0$, and obviously $\im f\subseteq \ker g$. On the other hand $\ker g$ is generated by elements of the form $x\Sq^1+y\Sq^3$, where $x,y\in \m A$. Since $f(y)=y\Sq^3=x\Sq^1+y\Sq^3\in\m A/(\Sq^1)$, the sequence is exact. Our first computation shows that $k^*$ hits $\Sq^3$, so $j^*\A$ induces the map $\B$. Together we get the following diagram.

\begin{figure}[h!]\centering
\begin{tiny}
	\begin{tikzcd}[column sep=15pt]
		(\m A/\!(\Sq^1,\Sq^3))_{n-3}\ar[r,"f"]\ar[d,"\B"]&
		(\m A/\!(\Sq^1))_n\ar[r,"g"]\ar[d,"\A"]&
		\ar[d](\m A/\!(\Sq^1,\Sq^3))_n\ar[r,"\delta"]\ar[d,"\B"]&
		(\m A/\!(\Sq^1,\Sq^3))_{n-2}\ar[r,"f"]\ar[d,"\B"]&
		(\m A/\!(\Sq^1))_{n+1}\ar[d,"\A"]\\
		H^{n-3}\ku\ar[r,"k^*"] & H^n(H\Z)\ar[r,"j^*"] & H^n\ku\ar[r,"\delta"] & H^{n-2}\ku\ar[r,"k^*"] & H^{n+1}(H\Z)
	\end{tikzcd}
\end{tiny}
\end{figure}

We know that $\A$ is an isomorphism, and the Hurewicz theorem shows that the map $\B\colon\m A/\!(\Sq^1,\Sq^3)_n\to H^n\ku$ is an isomorphism for $n\le 2$. If $\B$ is an isomorphism for $n\le r$ with $r>2$, it follows from the five-lemma that the middle vertical map in the diagram is an isomorphism. By induction the result follows.
\end{proof}

\section{Extended modules and indecomposable quotients}

As a general fact, if $A$ is an algebra over a ring $B$ and $N$ is a left $B$-module, $A\T_B N$ is a left $A$-module in the obvious way. Further, if $N$ is free as an $B$-module, then  $A\T_B N$ is free as an $A$-module (the same is true for projective modules). In this context, we call $A\T_B N$ an extended $B$-module. In practice, $B$ may be easier to work with than $A$, so it is a desirable property for an $A$-module to be an extended $B$-module.
\\

In addition to free modules, extended modules arise in the more interesting case that $A$ is a Hopf algebra and $B$ is a sub-Hopf algebra of $A$. Two other notable extended modules are $H^*\ko=\m A\T_{\m A(1)}\Z_2$ and $H^*\ku=\m A\T_{E(1)}\Z_2$.
\\

Let $A$ and $B$ be a sub-Hopf algebras of $\m A$ with $B\subseteq A$, and let $N$ be a left $B$-module. The tensor product $A\T_B N$ is naturally a left $A$-module in the obvious way. If $B=\Z_2$, then $A\T_B N$ is a free $A$-module: generators of $N$ over $\Z_2$ each generate a copy of $A$ in $A\T_B N$. This is true more generally: if $N$ is a free as a left $B$-module, then $\m A\T_B N$ is a free left $\m A$-module. Lemma \ref{rspin} is essentially an observation which appears in \cite{stolz}.

\begin{lemma}\label{rspin}
	The map $D\colon \MSpin\to\ko$ induces a split surjection on homology which allows us to consider $H_*\ko$ as a subalgebra of $H_*\MSpin$.
\end{lemma}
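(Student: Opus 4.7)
The plan is to adapt the argument of Theorem~\ref{square}(4) --- which handled the analogous $\Spinc$ statement --- to the $\Spin$ setting. Specifically, I would exhibit an explicit polynomial subring $R_{\Spin}\subset H_*\MSpin$ on which $D_*$ restricts to an isomorphism onto $H_*\ko$; taking the inverse then provides the desired splitting and identifies $H_*\ko$ with the subalgebra $R_{\Spin}$ of $H_*\MSpin$.

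First I would set up the $\Spin$-analog of the commutative square in Theorem~\ref{square}, with $M\!\rho\colon\MSpin\to\MO$ on the left, $D\colon\MSpin\to\ko$ on top, the mod-$2$ Thom class $\mathcal U\colon\MO\to H\Z_2$ along the bottom, and the inclusion $\iota\colon\ko\to H\Z_2$ on the right. Since $H^*\BSpin\twoheadrightarrow H^*\BO$ and $H^*H\Z_2\twoheadrightarrow H^*\ko$ are both quotient maps, both vertical arrows induce monomorphisms on $H_*(-;\Z_2)$. Now define
\[R_{\Spin}\coloneqq P\cap H_*\MSpin, \qquad P=\Z_2[x_{2^i-1}:i\ge 1]\subset H_*\MO,\]
where by Theorem~\ref{square}(3) the map $\mathcal U_*$ restricts to an isomorphism $P\xrightarrow{\sim}H_*H\Z_2=\mathcal A_*$. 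Commutativity of the square shows $D_*(R_{\Spin})\subseteq H_*\ko$, and injectivity of $D_*|_{R_{\Spin}}$ follows by chasing through the isomorphism $\mathcal U_*|_P$ and the monomorphism $\iota_*\colon H_*\ko\hookrightarrow\mathcal A_*$.

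The remaining task is to check surjectivity onto $H_*\ko=\Z_2[\zeta_1^4,\zeta_2^2,\zeta_3,\zeta_4,\ldots]$ (whose polynomial generators sit in degrees $4,6,7,15,31,\ldots$), by identifying enough elements of $R_{\Spin}$ in matching degrees to generate the target as a polynomial algebra. This generator-matching is the principal content I would want to defer to the literature: it is essentially Stolz's Corollary~4.7 applied in the $\Spin$ setting, and it is also a direct consequence of the Anderson--Brown--Peterson splitting of $\MSpin_{(2)}$, which exhibits $\ko$ as a wedge summand of $\MSpin$ with $D$ realized as the projection. Given the isomorphism $R_{\Spin}\xrightarrow{\sim}H_*\ko$, its inverse is a ring-homomorphism splitting of $D_*$, proving the lemma. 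The main obstacle is not the conceptual argument --- which runs in parallel with the already-proven $\Spinc$ case --- but the explicit combinatorial identification of the generators of $R_{\Spin}$, which is why the author characterizes the lemma as ``essentially an observation which appears in \cite{stolz}.''
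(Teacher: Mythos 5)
Your proposal is correct and follows essentially the same route as the paper: the paper gives no argument for Lemma \ref{rspin} beyond the remark that it is ``essentially an observation which appears in \cite{stolz}'', and your reconstruction simply transports the proof of Theorem \ref{square} to the $\Spin$ setting while deferring the same decisive input (Stolz's Corollary 4.7 / the Anderson--Brown--Peterson splitting realizing $\ko$ as a wedge summand of $\MSpin$ with $D$ the projection) to the same sources. You also correctly identify that the surjectivity of $D_*$ restricted to $R_{\Spin}$ onto $H_*\ko$ is the genuine content and must be cited rather than derived, which is exactly the division of labor the paper adopts.
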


\begin{lemma}(\cite[corollary $5.5$]{stolz}) \label{123} Let $Y$ be an $\MSpin$-module spectrum whose homology is bounded below and locally finite. Identify $H_*\ko$ as a subalgebra of $H_*\MSpin$ as in lemma \ref{rspin}. Then there is a functorial isomorphism
	\[H_*Y\to \m A_*\square_{\m A(1)_*}\ol{H_*Y}, \]
	where $\ol{H_*Y}\coloneqq \Z_2\T_{H_*\ko}H_*Y$. Given a map $f\colon X\to Y$ of $\MSpin$-module spectra, we can identify $f_*\colon H_*X\to H_*Y$ with $1\T\ol f_*$, where $\ol f_*$ is a map $\ol{H_*X}\to\ol{H_*Y}$. In addition, if $\ol f_*$ is a split surjection of $\m A(1)_*$-comodules, then $f_*$ is a split surjection of $\m A_*$-comodules.
\end{lemma}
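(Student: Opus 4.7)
The plan is to exhibit the claimed decomposition by first handling the base case $Y = \MSpin$, then bootstrapping to an arbitrary $\MSpin$-module spectrum via the module structure, and finally deducing the split-surjection statement from exactness of the cotensor functor $\m A_*\square_{\m A(1)_*}(-)$. The structural fact driving everything is that the inclusion $H_*\ko\hookrightarrow H_*\MSpin$ of lemma~\ref{rspin} is dual to a Hopf algebra quotient whose ``kernel'' is exactly $\m A(1)$, giving the identification $\m A_*\hmod H_*\ko \cong \m A(1)_*$.

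First I would handle $Y = \MSpin$ directly. Combining lemma~\ref{rspin} with the Anderson--Brown--Peterson splitting of $\MSpin_{(2)}$ as a wedge of suspensions of $\ko$, $\ko\langle 2\rangle$, and $H\Z_2$, one sees that $H_*\MSpin$ is free as a graded $H_*\ko$-module. Since $H_*\ko = \m A_*\square_{\m A(1)_*}\Z_2$ and the natural isomorphism $(\m A_*\square_{\m A(1)_*}\Z_2)\T_{\Z_2} N \cong \m A_*\square_{\m A(1)_*}N$ holds for any $\m A(1)_*$-comodule $N$, the free-module decomposition $H_*\MSpin \cong H_*\ko\T\ol{H_*\MSpin}$ translates directly into the comodule isomorphism $H_*\MSpin \cong \m A_*\square_{\m A(1)_*}\ol{H_*\MSpin}$.

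Next, for a general $\MSpin$-module spectrum $Y$, the action $\MSpin\w Y\to Y$ induces on homology an $H_*\MSpin$-module structure on $H_*Y$, which by restriction along $H_*\ko\hookrightarrow H_*\MSpin$ gives an $H_*\ko$-module structure on $H_*Y$. Under the bounded-below and locally-finite hypotheses, I would argue by an inductive Nakayama-style lift that $H_*Y$ is free over $H_*\ko$: choose a graded $\Z_2$-basis of $\ol{H_*Y}$, lift representatives degree by degree back to $H_*Y$, and verify (using boundedness from below to start the induction and local finiteness to control each degree) that these lifts freely generate. This produces $H_*Y \cong H_*\ko\T\ol{H_*Y}$ as $H_*\ko$-modules; the same transport-of-structure argument as in the base case equips $\ol{H_*Y}$ with a canonical $\m A(1)_*$-comodule structure and yields the claimed $\m A_*$-comodule isomorphism $H_*Y\to \m A_*\square_{\m A(1)_*}\ol{H_*Y}$. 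Functoriality, together with the factorization $f_* = 1\T \ol f_*$ for a map $f\colon X\to Y$ of $\MSpin$-modules, is immediate from the universal property of the tensor product defining $\ol{(-)}$.

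Finally, for the split-surjection claim I would invoke Milnor--Moore: since $\m A(1)$ is a sub-Hopf algebra of $\m A$, the latter is free as a left $\m A(1)$-module. Dually, $\m A_*$ is cofree as a right $\m A(1)_*$-comodule, so the functor $\m A_*\square_{\m A(1)_*}(-)$ from $\m A(1)_*$-comodules to $\m A_*$-comodules is exact and preserves split surjections; applying it to a split surjection $\ol f_*$ therefore produces a split surjection $f_*$. The principal obstacle lies in the freeness of $H_*Y$ over $H_*\ko$ for general $Y$: lifting a basis of $\ol{H_*Y}$ is straightforward, but one must verify no accidental relations among lifts occur, and this is precisely where the bounded-below and locally-finite hypotheses earn their keep. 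Once freeness is established, the rest of the argument is formal change-of-rings manipulation.
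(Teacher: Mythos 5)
The paper gives no proof of this lemma (it defers to Stolz's Proposition 5.4), so I am judging your argument on its own terms. Your skeleton --- freeness of $H_*Y$ over $H_*\ko$, then a change-of-rings identification --- is the right one, but the central step has a genuine gap. Freeness of $H_*Y$ over $H_*\ko$ does \emph{not} follow from the bounded-below and locally-finite hypotheses, which is where you locate it: $H_*\ko$ is a connected graded polynomial algebra, and $\Z_2$ with the augmentation action is a bounded-below, locally finite $H_*\ko$-module that is not free. Graded Nakayama shows your degree-by-degree lifts \emph{generate}; it says nothing about the absence of relations among them. What actually kills the relations is the compatibility between the $H_*\ko$-module structure and the $\m A_*$-comodule structure: because the action comes from a spectrum-level map $\MSpin\w Y\to Y$, the multiplication $H_*\ko\T H_*Y\to H_*Y$ is a morphism of $\m A_*$-comodules, so $H_*Y$ is a Hopf-module-type object over the comodule algebra $H_*\ko$, and it is the Milnor--Moore freeness theorem for such objects that forces freeness. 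You never invoke this compatibility, and without it the inductive lift as you describe it would fail.

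A second, smaller gap: even granting freeness, an isomorphism $H_*Y\cong H_*\ko\T\ol{H_*Y}$ of $H_*\ko$-modules does not ``translate directly'' into the $\m A_*$-comodule isomorphism $H_*Y\cong\m A_*\square_{\m A(1)_*}\ol{H_*Y}$, because the right-hand side depends on a (possibly nontrivial) $\m A(1)_*$-coaction on $\ol{H_*Y}$ that a bare module splitting does not see. One must first check that the $\m A_*$-coaction on $H_*Y$, pushed forward along the quotient $\m A_*\to\m A(1)_*$, descends to $\ol{H_*Y}$; then define the comparison map as the coaction followed by $1\T p$, verify it lands in the cotensor product and is an $\m A_*$-comodule map, and only then use freeness together with local finiteness to compare Poincar\'e series and conclude it is an isomorphism. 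Your final paragraph on split surjections is fine (indeed any additive functor preserves split surjections, so the appeal to exactness of the cotensor functor is more than you need), and the base case for $\MSpin$ via Anderson--Brown--Peterson is acceptable as a citation, though that splitting is itself usually proved by the very argument being established here.
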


\begin{lemma}\label{reducedc}
	Let $Y$ be an $\MSpinc$-module spectrum whose homology is bounded below and locally finite, and let $R\subset H_*\MSpinc$ be the subalgebra which $D_*$ maps isomorphically to $H_*\ku$. Let $\ol{H_*Y}\coloneqq \Z_2\T_RH_Y$ be the $R$-indecomposable quotient of $H_*Y$. Then there is a functorial isomorphism of $\m A_*$-comodules
	\[H_*Y\to \m A_*\square_{E(1)_*}\ol{H_*Y}. \]
	Given a map $f\colon X\to Y$ of $\MSpinc$-module spectra, we can identify $f_*\colon H_*X\to H_*Y$ with $1\T\ol f_*$, where $\ol f_*$ is a map $\ol{H_*X}\to\ol{H_*Y}$. In addition, if $\ol f_*$ is a split surjection of $E(1)_*$-comodules, then $f_*$ is a split surjection of $\m A_*$-comodules.
\end{lemma}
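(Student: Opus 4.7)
The plan is to mirror the proof of Lemma \ref{123} (Stolz's Corollary 5.5), with the substitutions: $H_*\ku$ in place of $H_*\ko$, $E(1)_*$ in place of $\m A(1)_*$, and $R\subset H_*\MSpinc$ in place of the analogous subalgebra of $H_*\MSpin$. Two structural ingredients drive the argument: (a) $H_*\ku=\m A_*\square_{E(1)_*}\Z_2$, obtained by dualizing $H^*\ku=\m A\hmod E(1)$; and (b) $D_*\colon H_*\MSpinc\to H_*\ku$ is a split surjection whose image is $R$, which is the $\Spinc$-analog of Lemma \ref{rspin} and follows directly from part (4) of Theorem \ref{square}.

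First I would construct the comparison map $\Phi\colon H_*Y\to \m A_*\square_{E(1)_*}\ol{H_*Y}$ as the composition of the $\m A_*$-coaction $H_*Y\to \m A_*\T H_*Y$ with $1\T \pi$, where $\pi\colon H_*Y\to \ol{H_*Y}$ is the projection modulo $R^+\cdot H_*Y$. The $R$-action on $H_*Y$ comes from the $\MSpinc$-module structure via the inclusion $R\hookrightarrow H_*\MSpinc$. The image of $\Phi$ lands in the cotensor product because the structure map $\MSpinc\wedge Y\to Y$ induces a map of $\m A_*$-comodules, so the $\m A_*$-coaction on $H_*Y$ is compatible with the $H_*\MSpinc$-module structure. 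Functoriality is automatic: for $f\colon X\to Y$ a map of $\MSpinc$-module spectra, $f_*$ is simultaneously a map of $\m A_*$-comodules and of $H_*\MSpinc$-modules, so it descends to a map $\ol f_*\colon \ol{H_*X}\to \ol{H_*Y}$, and under $\Phi$ one computes $f_*=1\T\ol f_*$.

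The central step is showing that $\Phi$ is an isomorphism. For $Y=\MSpinc$ this is $H_*\MSpinc\cong \m A_*\square_{E(1)_*}\ol{H_*\MSpinc}$, which can be verified directly by using the splitting of $D$ together with a Poincar\'e series comparison; more conceptually, it says that $H_*\MSpinc$ is the $\m A_*$-comodule induced from the $E(1)_*$-comodule $\ol{H_*\MSpinc}$. For general $Y$, I would use the $\MSpinc$-module structure and the K\"unneth formula to express $H_*Y$ as an extended $R$-module built from $\ol{H_*Y}$, then match this with the extended-comodule structure of $\m A_*\square_{E(1)_*}\ol{H_*Y}$. The assumption that $H_*Y$ is bounded below and locally finite is what makes such an extension/inductive argument converge. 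The last claim is then formal: the functor $\m A_*\square_{E(1)_*}(-)$ sends $E(1)_*$-comodule splittings to $\m A_*$-comodule splittings, so any $E(1)_*$-comodule splitting $\sigma$ of $\ol f_*$ yields an $\m A_*$-comodule splitting $1\T\sigma$ of $f_*=1\T \ol f_*$.

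The main obstacle will be establishing the isomorphism in the core step. In particular, one must carefully verify that $H_*Y$ has the structure of an ``extended'' $\m A_*$-comodule on $\ol{H_*Y}$, with full compatibility between the $R$-action coming from the $\MSpinc$-module structure and the $\m A_*$-coaction coming from the spectrum structure. This compatibility is precisely what makes the $R$-indecomposable quotient $\ol{H_*Y}$ acquire a well-defined $E(1)_*$-comodule structure, which is essential for even making sense of the right-hand side $\m A_*\square_{E(1)_*}\ol{H_*Y}$. Once this bookkeeping is in place, the remaining arguments follow the template in \cite{stolz}.
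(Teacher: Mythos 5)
Your proposal is correct and follows essentially the same route as the paper: the paper gives no independent argument here, deferring entirely to Stolz's Proposition 5.4, and your sketch is a faithful reconstruction of that proof with the appropriate substitutions ($\ku$ for $\ko$, $E(1)_*$ for $\m A(1)_*$, and the splitting of $\DC_*$ from Theorem \ref{square}(4) playing the role of Lemma \ref{rspin}). The one step you flag as the main obstacle --- verifying that $H_*Y$ is an extended $E(1)_*$-comodule --- is exactly the content of the cited result, so nothing in your outline diverges from what the paper relies on.
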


Lemmas \ref{123} and \ref{reducedc} are both particular cases of \cite[proposition $5.4$]{stolz}, so we defer to his proof.

\chapter{Transfer Maps}

Unless specified otherwise, we will assume that all spaces have a cell structure with finitely many cells in each dimension. For two spaces $X$, $Y$, we write $[X,Y]$ for homotopy classes of maps $X\to Y$. Given based spaces $(X,x_0)$ and $(Y,y_0)$, we similarly write $[(X,x_0),(Y,y_0)]$ for (basepoint preserving) homotopy classes of maps $(X,x_0)\to(Y,y_0)$. The notation $X_+$ means the disjoint union of $X$ with a point.

\section{Virtual vector bundles}

Define a virtual vector bundle over $X$ to be a pair $(X,f)$, where $f\colon X\to\BO\times\Z$ is an unbased map. For convenience, we sometimes omit the word ``vector'' and say $(X,f)$ is a ``virtual bundle.'' Homotopy classes of maps $X\to\BO\times\Z$ form the (unreduced) real $\K$-theory of $X$; i.e., $\KO(X)\coloneqq[X,\BO\times\Z]$. (Note that $\KO(X)$ can equivalently be defined as \textit{based} homotopy classes of maps $X_+\to\BO\times\Z$.) Given a virtual bundle $\xi=(X,f)$, we call $f$ the classifying map of $\xi$. Projection $\BO\times\Z\to\Z$ defines the rank of $\xi$. When $X$ is connected, the rank may be considered an integer $\op{rk}(\xi)\in\Z$. Two virtual bundles $(X,f)$ and $(X,g)$ are equivalent if they correspond to the same class of $\KO(X)$ (i.e., if $f$ and $g$ are homotopic).

By contrast, a genuine vector bundle $\eta$ over a connected space $X$ is classified by a map $X\to\BO(n)$, where $n$ is the rank of the bundle. Now $\eta$ yields a virtual bundle of rank $n$ via the composition
\[X\to\BO(n)\to\BO\times\{n\}\hookrightarrow\BO\times\Z.\]
Generally we abuse notation and write $\eta$ for both the vector bundle and virtual bundle. However, it is important to note that a virtual bundle does not always arise from a genuine vector bundle in this way. For example, virtual bundles can have negative rank, and, if $X$ is not compact, a map $X\to\BO\times\Z$ may not factor through $\BO(n)\times\Z$ for any $n$.

While there are virtual bundles which do not arise from vector bundles, there is also some loss of information in considering vector bundles as virtual bundles. For example, a stably trivial vector bundle yields a trivial virtual bundle of the same rank: suppose $\xi\oplus\bd{k}\cong\bd{n}\oplus\bd{k}$, where $n$ is the rank of $\xi$ and $\bd k$ is the trivial vector bundle of rank $k$. Then $X\xrightarrow{f}\BO(n)\hookrightarrow{\BO(n+k)}$ is nullhomotopic, so the resulting composition $X\to\BO$ is also nullhomotopic. This means that $\xi$, considered as a virtual bundle, is equivalent to the trivial virtual bundle of rank $n$.

\begin{remark}
It is immediate from the definitions that trivial virtual bundles are equivalent if and only if they have the same rank. It follows that vector bundles $\xi$ and $\eta$ over a connected base space $X$ yield equivalent virtual bundles if and only if $\op{rk}(\xi)=\op{rk}(\eta)$ and $\xi\oplus\bd{k}\cong\eta\oplus\bd{k}$ for some $k$.
\end{remark}

Suppose we have a genuine vector bundle $\xi$ of rank $n$ over a CW complex $X$. In particular, $X$ is paracompact, so we can embed $\xi$ into a trivial vector bundle $\bd{k}$ for some $k$. The complementary bundle $\xi^\perp$ (which can be defined topologically as the quotient bundle of $\ol{k}$ by $\xi$) can be used to form an additive inverse of $\xi$ as classes in $\KO(X)$. That is, $\KO(X)$ is an additive group, and, since $\xi+\xi^\perp=k$, we have $-\xi=\xi^\perp-k$. Hence $-\xi$ can be considered as the formal difference $\xi^\perp-k$ (which is a virtual bundle of rank $-n$). The (virtual) bundle $-\xi$ is called the \textbf{stable complement} of $\xi$. In particular, when $X$ is a manifold, we define the \textbf{stable normal bundle} of $X$ as the stable complement to the tangent bundle (considered as a virtual bundle).


Now suppose $\xi$ is stably trivial. Without loss of generality, $\xi\oplus\bd 1\cong\bd{n+1}$. Then $X\xrightarrow{f}\BO(n)\hookrightarrow{\BO(n+1)}$ is nullhomotopic, so $\rew f\colon X_+\to\BO\times\Z$ is homotopic to the constant map with image $\{b_0\}\times\{n\}$ and $\rew\xi\equiv\rew{\bd n}$. This shows that \textit{stably} trivial vector bundles become trivial virtual bundles of the same rank. Similar logic shows that \textit{stably equivalent} bundles of equal rank represent equivalent virtual bundles (of the same rank). It is important to not overlook the requirement that the genuine bundles had the same rank to begin with. For example, trivial virtual bundles of distinct ranks are \textit{not} equivalent.


\section{Thom spectra and Dold's theorem}

Given a CW spectrum $X$, we can suspend or desuspend $X$ to obtain $\Sigma^rX$ for any $r\in\Z$, where $(\Sigma^rX)_i=(X)_{r+i}$. A map of spectra $X\to Y$ of degree $d$ is defined as a map $\Sigma^dX\to Y$ of degree $0$ (a map $S^0\to S^0$ of degree $d$, for example, is a stable map $S^d\to S^0$). To avoid mentions of degree, we will instead use suspensions and desuspensions as appropriate to make all maps have degree $0$. For this reason, assume maps of spectra have degree $0$ by default. We will now describe some relevant spectra.

\begin{example}[Spectrum of a CW space]
  Any CW complex $X$ can be considered a (CW) spectrum whose $i$-th space $(X)_i$ is the basepoint $\{x_0\}$ for $i<0$ and $(X)_i=\Sigma^iX$ for $i\ge 0$. We use $X$ to denote the spectrum as well as the space.
\end{example}

\begin{example}[$\MO$ and similar Thom spectra]
  Write $\gamma_n$ for the universal $n$-plane bundle over $\BO(n)$. The inclusion $\BO(n)\to\BO(n+1)$ is induced by the composition $\O(n)\hookrightarrow\O(n)\times\O(1)\to\O(n+1)$, so $\gamma_{n+1}|_{\BO(n)}\cong\gamma_n\oplus 1$. The classical construction of Thom spaces yields spaces $\MO(n)$ and maps $\Sigma\MO(n)\to\MO(n+1)$ which give a spectrum $\MO$. Similar constructions give $\MSO$, $\MU$, $\MSpin$, $\MSpinc$, etc.
\end{example}

\begin{example}[Thom spectrum of a genuine vector bundle]\label{construction1}
  Suppose we have a (genuine) vector bundle $\xi$ over a CW complex $X$ of rank $k$. The classifying map $c_\xi\colon X\to\BO(k)$ induces $X^\xi\to\MO(k)$ (here $X^\xi$ denotes the classical Thom space rather than a spectrum; existence of $c_\xi$ is detailed in \cite{switzer} $11.33$). The pullback of $\gamma_{k+1}$ by the composition $X\to\BO(k)\to\BO(k+1)$ yields a map $\Sigma X^\xi\cong X^{\xi+1}\to\MO(k+1)$, and inductively we get a spectrum map $\Sigma^{-k}X^\xi\to\MO$. (Equivalently, we have a map $X^\xi\to\MO$ of degree $-k$.)
\end{example}

\begin{example}[Thom spectrum of a virtual bundle represented by a genuine bundle]\label{construction2}
  Suppose now that $\xi$ is a virtual bundle of (virtual) rank $k$ over a CW complex $X$ such that the classifying map $X\to\BO$ factors through $\BO(n)$ for some $n$. This is true, for example, when $X$ is finite dimensional (\cite{switzer} $6.35$). Let $\A$ be the induced vector bundle of rank $n$. Our first construction yields a classifying map $\Sigma^{-n}X^\A\to\MO$ of degree $0$. Define $X^\xi\coloneqq\Sigma^{k-n}X^\A$ so as to have the classifying map $\Sigma^{-k}X^\xi\to\MO$ (just as we did when $\xi$ was a genuine bundle of rank $k$).

  We should ensure that, in an appropriate sense, the spectrum $X^\xi$ and classifying map $\Sigma^{-k}X^\xi\to\MO$ are well-defined regardless of the choice of representative $\A$. Suppose a (genuine) $m$-bundle $\B$ also represents $\eta$. The maps $c_\A,c_\B\colon X\to\BO$ are then homotopic via a cellular homotopy $H$: to see this, apply relative cellular approximation to $(X\times I,X\times\{0,1\})$. This means $H$ factors through $\BO(\ell)$ for some $\ell$; in particular, $\A\oplus\bd{s}$ is isomorphic (as a vector bundle) to $\B\oplus\bd{t}$ for some $s,t\in\N$. Hence
  \[\Sigma^{k-n}X^\A\simeq \Sigma^{k-n-s}X^{\A\oplus\bd{s}}\simeq\Sigma^{k-n-s}X^{\B\oplus\bd{t}}\simeq\Sigma^{k-n-s+t}X^\B.\]
  Since $\A\oplus\bd{s}\cong\B\oplus\bd{t}$, we have $n+s=m+t$, so $\Sigma^{k-n-s+t}X^\B=\Sigma^{k-m}X^\B$. All equivalences shown are natural, so we have a natural equivalence $\Sigma^{k-n}X^\A\simeq\Sigma^{k-m}X^\B$ which, after desuspension, also carries the classifying map $\Sigma^{-n}X^\A\to\MO$ to the classifying map $\Sigma^{-m}X^\B\to\MO$.
\end{example}

Starting with a genuine bundle $\xi$, we can consider $\xi$ to be virtual, in which case Example \ref{construction1} and Example \ref{construction2} give potentially distinct definitions of $X^\xi$. However, it is trivial to check that both constructions agree.

\begin{example}[Thom spectrum of a negative bundle]
Let $\xi$ be a genuine vector bundle of rank $k$ over $X$. As a virtual bundle, $\xi$ has additive inverse $-\xi$ of rank $-k$. Explicitly, since $X$ is paracompact, we can include $\xi$ into a trivial vector bundle of rank $n$. The orthogonal complement $\xi^\perp$ is then a genuine vector bundle of rank $n-k$. We simply let $-\xi$ be the virtual bundle represented by $\xi^\perp$ and with rank $-k$ (recall that the rank is just given by $\BO\times\Z\to\Z$). Alternatively, and perhaps more naturally, notice that $\xi\oplus\xi^\perp=\bd n$, so $-\xi=\xi^\perp-\bd n$. The formal difference $\xi^\perp-\bd n$ then has rank $n-k-n=-k$. Note that both definitions of $-\xi$ are equivalent: the presence of a trivial vector bundle does not affect the resulting map $X\to\BO$, only the component $X\to\Z$.
\end{example}

For all our purposes, we have seen how a virtual bundle $\eta$ of rank $r$ defines an associated Thom spectrum $X^\eta$, even when $r<0$. In this context, we have the following generalization of the classical Thom isomorphism established by Dold (details can be found in \cite[\S 4.4]{board}), as we explain subsequently.

%
%

\section{The general bundle transfer map}

\subsection{Virtual bundles and generalized Thom isomorphisms}
As usual, let $\BO$ denote the CW complex filtered by the classifying spaces $\BO(n)$. For a connected CW complex $X$, a \textbf{virtual bundle} $\xi$ over $X$ of rank $r\in\Z$ is an element of $\KO(X)\coloneqq[X_+,\BO\times\Z]$ whose projection onto $\Z$ is identically $r$. We can now define orientations of a virtual bundle with respect to any oriented spectrum $A$.

For any virtual bundle $\xi$ over $X$, we have an associated \textbf{Thom spectrum} $X^\xi$. We will write $n$ for the trivial (virtual) bundle of rank $n$ over $X$. For $n\ge 0$, $X^n=S^n\w X_+$. In particular $X^0=X_+$ is the disjoint union of $X$ with a point.

We note a potential confusion in terminology here: a genuine vector bundle $\xi$ of rank $k$ over $X$ can be considered a virtual bundle of the same rank, but the classifying map $X^\xi\to\MO$ has degree $-k$ (or codegree $k$). In this case, $\xi$ has a genuine classifying map $X\to\BO(k)$, and we use the same term for the induced map of spectra $X^\xi\to\MO$. Classes of maps to $\MO$ are more general, since a virtual bundle cannot necessarily be classified by a map $X\to\BO(n)$ for some $n$.

Let $A$ be a spectrum with unit $i\colon S^0\to A$, and let $\xi$ be a virtual bundle of rank $n$ over a connected CW complex $X$. A spectrum map $u\colon X^\xi\to A$ of degree $-n$ (or codegree $n$) is a \textbf{fundamental class} of $\xi$ (with respect to $A$) if $u$ restricts to $i$ on each fiber of $\xi$. More precisely, for each $x\in X$ the fiber of $\xi$ in $X^\xi$ can be identified with $S^n$. This gives a degree $n$ map of spectra $S^0\to X^\xi$ (whose $0$-th component is a map $S^n\to X^\xi$), and composing with $u\colon X^\xi\to A$ gives a map $S^0\to A$ of degree $0$. If $\xi$ has a fundamental class with respect to $A$, we also say \textbf{$\xi$ is $A$-oriented}. Now we can state the following generalization (by Dold) of the classical Thom isomorphism (details can be found in \cite[\S 4.4]{board}).

\begin{theorem}
	[Dold]\label{dold} Suppose a virtual bundle $\xi$ of rank $k$ over a CW complex $X$ is oriented with respect to a spectrum $A$ with unit $i\colon S^0\to A$. In addition, let $C$ be a spectrum with a left $A$-action $A\w C\to C$. Then for any virtual bundle $\eta$, there are ``Thom isomorphisms''
	\[\Phi^\xi\colon \rew C^*(X^\eta)\to \rew C^{*+k}(X^{\eta+\xi})\and \Phi_\xi\colon \rew C_*(X^{\eta+\xi})\to \rew C_{*-k}(X^\eta). \]
	In particular, when $\eta=0$, these isomorphisms become
	\[\Phi^\xi\colon C^*(X)\to \rew C^{*+k}(X^\xi)\and \Phi_\xi\colon \rew C_*(X^\xi)\to C_{*-k}(X). \]
\end{theorem}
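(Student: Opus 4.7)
The plan is to construct $\Phi^\xi$ and $\Phi_\xi$ directly from the fundamental class and the Thom diagonal, and then establish that they are isomorphisms by bootstrapping from the trivial case via the standard cellular induction. Writing $u$ as a degree-zero spectrum map $X^\xi\to\Sigma^k A$ and using the Thom diagonal $\Delta\colon X^{\eta+\xi}\to X^\eta\w X^\xi$ (which arises from the fiberwise decomposition of $\eta+\xi$ pulled back along the diagonal $X\to X\times X$), I would send a representative $c\colon X^\eta\to\Sigma^j C$ of a class in $\rew C^j(X^\eta)$ to the composite
\[X^{\eta+\xi}\xrightarrow{\Delta}X^\eta\w X^\xi\xrightarrow{c\w u}\Sigma^{j+k}(C\w A)\xrightarrow{a}\Sigma^{j+k}C,\]
where $a\colon A\w C\to C$ is the left module action (composed with a swap of smash factors if needed). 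The homological map $\Phi_\xi$ is defined dually: one slants classes in $\rew C_*(X^{\eta+\xi})$ against $u$ using the same Thom diagonal and the module structure of $C$.

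To verify these are isomorphisms I would first handle the base case $X=\mathrm{pt}$, where any virtual bundle of rank $k$ is equivalent to the trivial one, $\mathrm{pt}^\xi\simeq S^k$, and $u$ is forced to be the $k$-fold suspension of the unit $i\colon S^0\to A$; then both $\Phi^\xi$ and $\Phi_\xi$ reduce to the suspension isomorphism for $\rew C$. For a finite CW complex $X$ I would induct on the number of cells. A cofiber sequence $X'\hookrightarrow X\to X/X'$ with $X'$ a subcomplex yields compatible cofiber sequences $(X')^{\eta+\xi}\to X^{\eta+\xi}\to (X/X')^{\eta+\xi}$ and $(X')^{\eta}\to X^{\eta}\to (X/X')^{\eta}$ (using naturality of the construction under the inclusion), and naturality of $\Phi^\xi$ produces a ladder of long exact sequences in $\rew C^{*}$. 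Since $X/X'$ is a wedge of spheres the rightmost vertical map reduces to the point case, and a five-lemma argument finishes the inductive step. For arbitrary CW complexes the result follows by passage to the colimit, using compactness of singular (co)homology classes. Finally, the reduction from general $\eta$ to $\eta=0$ uses that both $\rew C^*(X^\eta)$ and $\rew C^{*+k}(X^{\eta+\xi})$ are modules over $\rew C^*(X_+)$ with $\Phi^\xi$ a module map, so the already-established Thom isomorphism at $\eta=0$ forces $\Phi^\xi$ to be an isomorphism in general.

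The main technical obstacle, I expect, is not the argument itself but making the virtual-bundle framework sufficiently rigorous at the spectrum level: in particular, defining $X^\xi$ functorially when $\xi$ does not factor through $\BO(n)$ for any fixed $n$, and verifying that the Thom diagonal $X^{\eta+\xi}\to X^\eta\w X^\xi$ and the fundamental class $u$ are coherent under change of vector-bundle representatives. Once this bookkeeping is in place the cellular induction above goes through essentially verbatim; for the complete technical set-up of virtual Thom spectra and their generalized Thom isomorphisms in the form stated, I would defer to the systematic development in \cite[\S VI]{board}.
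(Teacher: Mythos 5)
The paper does not prove this theorem at all: it is stated as a quoted result of Dold, with the reader referred to \cite[\S 4.4]{board} for details. So there is no in-paper argument to compare against; what you have written is the standard textbook proof of the generalized Thom isomorphism, and it is essentially the argument carried out in the cited reference. Your construction of $\Phi^\xi$ via the Thom diagonal and the module action, the base case over a point, and the cell-by-cell five-lemma induction are all correct and are exactly how one would fill in the citation.

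Two points deserve tightening. First, the passage from finite to arbitrary CW complexes is unproblematic for homology (filtered colimits are exact), but for cohomology an inverse limit intervenes and you need a $\lim^1$ (Milnor sequence) comparison, or better, you should phrase the whole argument at the spectrum level: the composite $X^{\eta+\xi}\w C\to X^\eta\w X^\xi\w C\to \Sigma^k X^\eta\w C$ is a map of spectra which the cellular induction shows is a $\pi_*$-isomorphism on finite skeleta, hence an equivalence on the homotopy colimit, and both $\Phi_\xi$ and $\Phi^\xi$ follow simultaneously. This matters here because the spaces to which the theorem is applied ($\BPSp(2,1)$, $\BSU(2,1)$, etc.) are infinite complexes. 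Second, your closing reduction of general $\eta$ to $\eta=0$ via the $\rew C^*(X_+)$-module structure is both redundant and, as stated, not a proof: a module map between two modules is not forced to be an isomorphism just because some other module map is. Fortunately your induction already treats general $\eta$ directly (the point case is still just a suspension isomorphism, since $\mathrm{pt}^{\eta+\xi}\simeq S^{j+k}$), so you should simply delete that last reduction rather than repair it.
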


When $X$ is a compact smooth manifold with tangent bundle $\tau$, we say that \textbf{$X$ is $A$-oriented} if $-\tau$ is $A$-oriented. This definition is (naturally) equivalent to the more classical version using local homology groups (\cite{board} $4.7$). This also allows for a notion of $A$-orientation for certain bundles whose fibers are smooth manifolds. Suppose $\pi\colon E\to B$ is a bundle whose fiber $F$ is a closed $k$-manifold and whose structure group $G$ is a compact Lie group acting smoothly on $F$. We say that $E$ has an $A$-structure if the virtual bundle $-\tau$ is $A$-oriented, where $\tau$ is the bundle of tangent vectors along the fibers of $\pi$ and $-\tau$ is the stable complement. Thus, an $A$-structure on $E$ amounts to appropriately compatible $A$-orientations of each fiber $\pi\inv(b)$ (for all $b\in B$).

\subsection{The Thom map associated to a bundle}

Suppose for the remainder of this section that we have a fiber bundle $\pi\colon E\to B$ whose fiber $F$ is a closed $k$-manifold and whose structure group $G$ is a compact Lie group acting smoothly on $F$. Again, let $\tau$ be the bundle of tangent vectors along the fibers of $\pi$ and let $-\tau$ denote the stable complement. I.e., $\tau$ is a vector bundle over $E$ whose fiber over $x\in E$ is the tangent space of $\pi\inv(\pi(x))\cong F$ at $x$.

In this setting, we have a transfer $T(\pi)\colon \Sigma^kB_+\to E^{-\tau}$. We summarize the construction here and refer to \cite{board} $6.20$ for more details. First consider the case when $B$ is a compact CW complex (then $E$ is compact: it has an induced CW structure from $F$ and $B$ with finitely many cells). For some $d$, there is a representation space $\R^d$ along with a $G$-equivariant smooth embedding $F\hookrightarrow\R^d$ (\cite{board} $6.19$). Write $E_G$ for the principal $G$-bundle underlying $\pi\colon E\to B$, and consider the associated $\R^d$-bundle $\eta=E_G\times_G\R^d$. By compactness, $\eta$ embeds (via a bundle map) into a trivial vector bundle $B\times\R^{n+k}$ for some $n$. Let $U$ be a tubular neighborhood of $F$ in $\R^d$ and consider the bundle $E_G\times_G U$. We now have the bundle maps shown.

\begin{figure}[h!]\centering
	\begin{tikzcd}
		F\ar[d]\ar[r,hook] & U\ar[d]\ar[r,hook] & \R^d\ar[d]\ar[r,hook] & \R^{n+k}\ar[d]\\
		E\ar[r,hook] & E_G\times_G U\ar[r,hook] & E_G\times_G\R^d \ar[r,hook] & B\times\R^{n+k}
	\end{tikzcd}
\end{figure}

We may consider $E_G\times_G U$ to be a fiberwise tubular neighborhood of $E$ in $B\times\R^{n+k}$. In addition, we can identify $E_G\times_G U$ with the unit disk bundle of the normal bundle $\nu$ of $E\hookrightarrow B\times\R^{n+k}$. Collapse the complement of $E_G\times_G U$ to a point and thus obtain a map $\Sigma^{n+k}B_+\to E^\nu$ on Thom spaces. Note that $\nu$ here is a genuine bundle of rank $n$ over $E$.

For our purposes, we must modify this construction to apply when $B$ is only filtered by compact CW complexes $B_0\subset B_1\subset B_2\subset\cdots$. Then each $B_i$ has an associated bundle $\nu_i$ over $E_i\coloneqq\pi\inv(B_i)$, say with rank $n_i$, and we can ensure these be compatible in the sense that the $\nu_i$ form a virtual bundle $\nu$ (of rank $0$) over $E$. Then the maps $\Sigma^{n_i+k}B_+\to E^{\nu_i}$ form a spectrum-level map $\Sigma^kB_+\to E^{-\tau}$, where $-\tau$ is considered a virtual bundle over $E$ (of rank $0$). For more details see \cite{fuhr}.


\subsection{Constructing the umkehr map from the Thom map}

\begin{definition}[\cite{board} $V.6.2$]\label{defmult}
  Let $A$ be a ring spectrum and let $C$ be a spectrum with a left $A$-action $A\w C\to C$. The \textbf{transfer maps} associated to a map $f\colon X\to Y$ of spaces refer to functorial homomorphisms
  \[f_!\colon C^*X\to C^{*-r}Y\qquad\text{and}\qquad f^!\colon C_*Y\to C_{*+r}X\]
  which are multiplicative in the sense that $f_!$ and $f^!$ are maps of $C_*$-modules and, for $\A\in C^*X$, $\B\in C^*Y$, and $y\in C_*Y$, we have
  \begin{itemize}
    \item [(a)] $f_!(\A\smile f^*\B)=f_!(\A)\smile\B$;
    \item [(b)] $f_!(f^*(\A)\smile\B)=(-1)^{r|\A|}\A\smile f_!\B$;
    \item [(c)] $f^!(y\frown \A)=f^!(y)\frown f^*(\A)$;
    \item [(d)] $f_*(f^!(y)\frown\A)=(-1)^{r|y|}y\frown f_!(\A)$;
    \item [(e)] $\langle f^!x,\A\rangle=(-1)^{r|x|}\langle x,f_!\A\rangle$.
  \end{itemize}
\end{definition}

\color{black}

\begin{theorem}[\cite{board} $V.6.21$ and $V.6.2$]\label{transfer}
	Let $\pi\colon E\to B$ be a bundle with fiber $F$ and structure group $G$ satisfying the following:
	\begin{itemize}
      \item $E$ and $B$ are CW complexes;
      \item $F$ is a compact smooth manifold of dimension $k$;
      \item $G$ is a compact Lie group which acts smoothly on $F$;
      \item there is a ring spectrum $A$ and a spectrum $C$ with a left $A$-action such that $-\tau$ is $A$-oriented, where $\tau$ is the bundle of tangents along the fibers of $\pi$.
    \end{itemize}
Then we have transfer maps
\[\pi_!\colon C^*(E)\to C^{*-k}(B)\and\pi^!\colon C_*(B)\to C_{*+k}(E)\]
which are multiplicative in the sense of definition \ref{defmult} and are the respective compositions
\[C^*(E) \xrightarrow{\Phi^{-\tau}} \rew C^*(E^{-\tau}) \xrightarrow{T(\pi)^*} \rew C^*(\Sigma^kB_+)\]
and
\[\rew C_{*+k}(
\Sigma^kB_+)\xrightarrow{T(\pi)_*} \rew C_{*+k}(E^{-\tau}) \xrightarrow{\Phi_{-\tau}} C_{*+k}(E).\]
\end{theorem}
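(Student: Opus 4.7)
The plan is to take the two displayed compositions as the definitions of $\pi_!$ and $\pi^!$ and then check the five multiplicative properties of Definition \ref{defmult} in turn. The first step is well-definedness. By hypothesis $-\tau$ is $A$-oriented, so Dold's generalized Thom isomorphism (Theorem \ref{dold}, with $\eta=0$) provides the isomorphisms $\Phi^{-\tau}\colon C^*(E)\to \rew C^*(E^{-\tau})$ and $\Phi_{-\tau}\colon \rew C_*(E^{-\tau})\to C_*(E)$, while the Thom-Pontrjagin map $T(\pi)\colon \Sigma^kB_+\to E^{-\tau}$ was built in the previous subsection from a fiberwise embedding of $E$ into a trivial bundle over $B$ followed by collapse of the complement of a fiberwise tubular neighborhood. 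Both factors are functorial in $\pi$ under pullback of bundles, and the degrees match: $T(\pi)^*$ shifts by $k$ in cohomology because of the $\Sigma^k$ on the source, and $\Phi^{-\tau}$ contributes no shift since $-\tau$ has virtual rank $0$.

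The heart of the argument is verifying the projection formulas. Up to suspension conventions, $\Phi^{-\tau}$ is cup with a Thom class $u\in \rew A^0(E^{-\tau})$ arising from the $A$-orientation, and $\Phi_{-\tau}$ is cap with the same class; both operations are natural under maps of Thom spectra. Since $T(\pi)^*$ and $T(\pi)_*$ are induced by an honest spectrum map, they commute with the pairings $C^*\otimes C^*\to C^*$, $C^*\otimes C_*\to C_*$, and the Kronecker pairing $\langle\cdot,\cdot\rangle$. For property (a), $\pi_!(\A\smile \pi^*\B)=\pi_!\A\smile \B$, apply $\Phi^{-\tau}$ and use that $\Phi^{-\tau}(\A\smile \pi^*\B)=(\A\cup u)\smile \pi^*\B$ in $\rew C^*(E^{-\tau})$ after the standard identification $E^{-\tau}\wedge_E E^0 \simeq E^{-\tau}$; then push forward via $T(\pi)^*$ and use that $T(\pi)^*$ is multiplicative with respect to the $C^*B$-module structure coming from $\pi^*$. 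Properties (b), (c), (d) follow from the same package of identities (Thom-isomorphism is cup/cap with $u$, plus naturality of cup and cap under $T(\pi)$), while (e) is immediate from the naturality of $\langle\cdot,\cdot\rangle$ under $T(\pi)$ and the adjoint relation between cup-with-$u$ and cap-with-$u$ in the Thom spectrum.

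The main obstacle is purely bookkeeping. Because $-\tau$ is only a virtual bundle (of virtual rank $0$), its Thom class lives in a Thom spectrum rather than a Thom space, and the $\Sigma^k$ on the source of $T(\pi)$ must be tracked through each identification $E^{-\tau+\eta}\simeq E^{-\tau}\wedge_E E^{\eta}$ that appears in the derivation of (a)-(d). Signs also need to be matched carefully with those already appearing in Definition \ref{defmult}, and this is the only place the argument is delicate. Since Dold's Thom isomorphism is itself multiplicative with respect to these conventions, all five identities reduce to the commutation of cup and cap products with the spectrum map $T(\pi)$ and with cup/cap-with-$u$.

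For this reason I would write out explicitly only the construction of $\pi_!$ and $\pi^!$, together with the identifications of Thom spectra needed to make sense of each side of (a)-(e), and then refer to \cite{board} V.6.21 and V.6.2 for the sign-careful verification of the projection formulas themselves; our hypotheses are precisely those under which that reference applies.
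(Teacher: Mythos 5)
Your proposal is correct and ends up in the same place as the paper: the paper gives no proof of Theorem \ref{transfer} at all, simply attributing it to Boardman (V.6.21 and V.6.2), and your plan likewise defers the sign-careful verification of properties (a)--(e) to that same reference after correctly sketching the standard construction (compositions of the Thom--Pontrjagin collapse with Dold's Thom isomorphism, multiplicativity via cup/cap with the Thom class and naturality under $T(\pi)$).
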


%

\color{black}

\subsection{The $\HP^2$-bundle transfer}

Let us recall an earlier observation: $\PSp(3)$ acts transitively on $S^{11}\subset\H^3$ and this descends to a transitive action on $\HP^2$. The fiber over a point is $\PSp(2,1)\coloneqq \op{P}(\Sp(2)\times\Sp(1))$, so we have a bundle $\PSp(2,1)\to\PSp(3)\to\HP^2$. In turn this yields a bundle $\pi\colon\BPSp(2,1)\to\BPSp(3)$ with fiber $\HP^2$ and structure group $\PSp(3)$ acting by isometries on $\HP^2$. Let $\tau$ be the bundle of tangent vectors along the fibers of $\pi$ and write $-\tau$ for the stable complement. We now compute some basic (co)homological properties of this bundle.

\begin{theorem}\label{sixstep}
	For the bundle $\HP^2\to\BPSp(2,1)\xrightarrow{\pi}\BPSp(3)$ with $\tau$ the associated bundle of tangent vectors along the fiber,
	\begin{enumerate}
	  \item $H^*\BPSp(2,1)\cong\Z[u_2,u_3,u_4,u_8]$, where $\deg(u_i)=i$;
	  \item $H^*\BPSp(3)\cong\Z_2[t_2,t_3,t_8,t_{12}]$, where $\deg(t_j)=j$;
	  \item $\pi^*(t_2)=u_2$ and $\pi^*(t_3)=u_3$, while $\pi^*(t_8)=u_4^2+u_8$ and $\pi^*(t_{12})=u_4u_8$;
	  \item $\Sq^1(t_2)=t_3$ and $\Sq^1(t_3)=\Sq^1(t_4)=\Sq^1(t_{12})=0$;
	  \item $\Sq^2(t_2)=t_2^2$, $\Sq^2(t_3)=t_2t_3$, $\Sq^2(t_8)=0$, and $\Sq^2(t_{12})=t_2t_{12}$;
	  \item $w(\tau)=1+(u_2^2+u_4)+(u_2u_4+u_3^2)+u_3u_4+u_8$;
	  \item $\pi_!\colon H^*\BPSp(2,1)\to H^{*-8}\BPSp(3)$ is  given modulo $t_{12}$ by
	  		\[\pi_!(u_3^au_2^bu_4^cu_8^d)=\begin{cases}
				t_3^at_2^bt_8^{c/2-1)}\pw c>0\text{ is even and }d=0,\\
				t_3^at_2^bt_8^{d-1}\pw c=0\text{ and }d>0,\\
				0\pw\text{ otherwise.}
			\end{cases}\]
	\end{enumerate}
\end{theorem}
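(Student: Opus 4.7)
The plan is to treat the seven claims together, using the Serre spectral sequence of $\pi\colon \BPSp(2,1)\to\BPSp(3)$ with fiber $\HP^2$ as the main tool. Parts (2), (4), (5) are already quoted above from Kono and Stolz, so I would only need to cite them. The real work is in (1), (3), (6), (7).

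First I would run the Serre spectral sequence. Its $E_2$-page is $E_2^{p,q}=H^p\BPSp(3)\otimes H^q\HP^2$ with $H^*\HP^2=\Z_2[a]/(a^3)$, $|a|=4$. The only possibly nonzero differential on $a$ is $d_5(a)\in H^5\BPSp(3)=\Z_2\{t_2t_3\}$, and on $a^2$ the possible transgression lives in $H^9\BPSp(3)$. I would argue $d_5(a)=0$ by restriction to a maximal torus of $\PSp(3)$ (the classes $t_2,t_3$ come from the $\Z_2$ cohomology of $B\Z_2^3/\Sigma_3$ and restrict injectively, while the tangent bundle along the fiber has no degree-$5$ Stiefel--Whitney class); equivalently, one can observe that $\pi$ admits a section over the $5$-skeleton coming from the subgroup $\PSp(2,1)\subset \PSp(3)$. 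Once $d_5=0$, all other differentials vanish for degree reasons, and $E_\infty=E_2$. Choosing lifts gives (1): set $u_2=\pi^*t_2$, $u_3=\pi^*t_3$, pick $u_4$ representing $a$, and $u_8$ representing $a^2$, and count dimensions via the generating function to confirm $H^*\BPSp(2,1)=\Z_2[u_2,u_3,u_4,u_8]$.

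For (3), the equalities $\pi^*t_2=u_2,\pi^*t_3=u_3$ are definitional. To identify $\pi^*t_8$ and $\pi^*t_{12}$, use that both lie in the image of $\pi^*$, which is the part of $H^*\BPSp(2,1)$ fixed by the restriction to the fiber. Since $u_4$ restricts to $a$ and $u_8$ to $a^2$, the classes $u_4^2$ and $u_8$ both restrict to $a^2$, so their sum lies in $\pi^*H^*\BPSp(3)$, forcing $\pi^*t_8 = u_4^2+u_8$ (up to a permissible choice of lift $u_8$). Similarly $u_4u_8$ restricts to $a^3=0$, so $u_4u_8\in\pi^*H^*\BPSp(3)$, and dimension-counting forces $\pi^*t_{12}=u_4u_8$. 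For (6), I would identify $\tau$ with the bundle associated to the linear $\PSp(2,1)$-action on $T_{[0{:}0{:}1]}\HP^2\cong\H^2\cong\R^8$, and compute $w(\tau)$ by pulling back to the torus via the splitting principle and matching the elementary symmetric functions in the roots against $u_2,u_3,u_4,u_8$.

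Part (7) is the real computation. The key is the projection formula $\pi_!(x\cdot\pi^*y)=\pi_!(x)\cdot y$ from Definition~\ref{defmult}(a). Since $u_2,u_3$ are pulled back, we get
\[\pi_!(u_3^au_2^bu_4^cu_8^d)=t_3^at_2^b\cdot \pi_!(u_4^cu_8^d),\]
reducing to monomials in $u_4,u_8$. Fiber integration gives $\pi_!(1)=\pi_!(u_4)=0$ and $\pi_!(u_8)=1$ (the top fundamental class of $\HP^2$). Using the two crucial relations $u_4^2=\pi^*t_8+u_8$ and $u_4u_8=\pi^*t_{12}$, I would induct. Modulo $t_{12}$, the second relation kills any $u_4u_8$ factor, so for $c$ odd every term collapses and the answer is $0$. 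For $c$ even, expand $u_4^c=(u_4^2)^{c/2}=(\pi^*t_8+u_8)^{c/2}$, distribute, pull out $\pi^*t_8$ via the projection formula, and collapse $u_4u_8$ terms mod $t_{12}$ as before; the recursion $\pi_!(u_8^i)\equiv t_8\,\pi_!(u_8^{i-1})\pmod{t_{12}}$ for $i\geq 2$ then gives $\pi_!(u_8^d)=t_8^{d-1}$ and $\pi_!(u_4^c)=t_8^{c/2-1}$, with any mixed $c,d>0$ term killed by $u_4u_8\equiv 0$.

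The main obstacle is the vanishing of $d_5(a)$, which underpins the entire structural description: everything else, including the umkehr computation, follows from the projection formula and the three identified relations. The formula in (7) looks asymmetric, but this asymmetry is precisely a shadow of the identity $u_4u_8=\pi^*t_{12}$, which forces every ``mixed'' monomial to lie in the ideal generated by $t_{12}$ after applying $\pi_!$.
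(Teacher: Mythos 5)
Your overall architecture is reasonable, and part (7) is correct and is essentially the method the paper uses for the analogous $\CP^2$-bundle in the last appendix (projection formula, $\pi_!(u_8)=1$ from the top class of the fiber, and the recursion driven by $u_4^2=\pi^*t_8+u_8$ and $u_4u_8=\pi^*t_{12}$). The problem is that parts (3) and (6) -- the two relations your recursion depends on, and the formula for $w(\tau)$ -- are exactly where your argument has genuine gaps. First, the image of $\pi^*$ is \emph{not} ``the part of $H^*\BPSp(2,1)$ fixed by restriction to the fiber.'' Under Leray--Hirsch, every class is uniquely $r_0+r_1u_4+r_2u_8$ with $r_i\in\pi^*H^*\BPSp(3)$, and the kernel of restriction to the fiber consists of all such classes whose coefficients have vanishing constant term; for instance $u_2u_4$ restricts to zero but is not pulled back. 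So ``$u_4^2+u_8$ restricts to $0$, hence lies in $\im\pi^*$'' is a non sequitur, and the subsequent dimension count cannot force $\pi^*t_8=u_4^2+u_8$ or $\pi^*t_{12}=u_4u_8$: in degree $12$, say, many classes restrict to zero. Worse, these two relations \emph{are} the multiplicative structure of $H^*\BPSp(2,1)$ over $H^*\BPSp(3)$ (equivalently, the expression of $u_4^3$ in the Leray--Hirsch basis), which the spectral sequence alone does not give you; adjusting ``the choice of lift $u_8$'' to make (3) true then leaves (6) unverified, since (6) asserts a specific formula for $w(\tau)$ in terms of that same $u_8$.

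Second, your detection arguments are off. Restriction to a maximal torus cannot see $H^*(\BPSp(3);\Z_2)$: the class $t_3$ is odd-dimensional and dies in $H^*(BT;\Z_2)$. The paper instead restricts along $\Z_2^4\to\op{P}(\Sp(1)^3)\to\PSp(2,1)\to\PSp(3)$, decomposes the adjoint representation of $\PSp(3)$ restricted to $\op{P}(\Sp(1)^3)$ as $R_1\oplus R_2\oplus R_3\oplus R_{23}\oplus R_{13}\oplus R_{12}$, identifies $\tau$ with $\f h^\perp\cong R_{13}\oplus R_{23}$, and computes all total Stiefel--Whitney classes as explicit polynomials in $\Z_2[x_1,x_2,y_1,y_2]$. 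This yields $u_4=s_1+s_2$ and $u_8=s_1s_2$ as concrete polynomials, after which (3) and (6) are one-line identities ($t_8=s_1^2+s_1s_2+s_2^2=u_4^2+u_8$, $t_{12}=s_1s_2(s_1+s_2)=u_4u_8$) and the collapse of the spectral sequence follows because $w_4(\tau)$ visibly restricts to the generator of $H^4\HP^2$ -- which is the honest replacement for your unsubstantiated ``section over the $5$-skeleton'' claim. You need some such explicit input (representation-theoretic or otherwise) before the formal Leray--Hirsch and umkehr machinery in your proposal can be run.
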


We delay the proof of these facts and show how they apply. Our $\HP^2$-bundle has a fiberwise $\Spin$-structure, i.e., $-\tau$ is oriented with respect to $\MSpin$. Applying Boardman's construction (\cite{board}) to the fiber sequence $\HP^2\to\BPSp(2,1)\xrightarrow{\pi}\BPSp(3)$ yields a bundle transfer map  $\m T\colon\Sigma^8\BPSp(3)_+\to M(-\tau)$. Meanwhile, theorem \ref{dold} gives identifications
\[\rew H^*M(-\tau)\cong H^*\BPSp(2,1)\and \rew H_*M(-\tau)\cong H_*\BPSp(2,1).\]
In terms of spectra, the transfer of theorem \ref{transfer} with $C=\MSpin$ is then
\[\MSpin\w\Sigma^8\BPSp(3)_+\xrightarrow{1\w \m T}\MSpin\w M(-\tau)\to \MSpin\w\BPSp(2,1)_+, \]
where the righthand map induces the Thom isomorphism. The Thom isomorphism can be described more explicitly as the map induced on homotopy groups of the composition indicated below. We use $\op{G}=\PSp(3)$ and $\op{H}=\PSp(2,1)$; dashed arrows indicate a map induced on homotopy groups.

\begin{figure}[h!]\centering
\begin{tiny}
	\begin{tikzcd}[row sep=50]
		\MSpin\w\Sigma^8BG_+\rar["1\w \m T"]\ar[drrr,dashed,"\pi^!" {above=15, left=30}]\ar[rr,bend left=17,"1\w t"]\ar[rrr,bend left=23,"T"]\ar[rrr,bend left=30,"\Psi",dashed]
			&\MSpin\w M(-\tau)\ar[drr,"\Phi",dashed,bend left=7]\rar["1\w \m M"]\ar[d,"1\w\D" below left=2,crossing over]
			& \MSpin\w \MSpin\rar["\mu"] & \MSpin\\
		& |[xshift=-10pt]|\mathllap{\MSpin\w}M(-\tau)\w BH_+ \rar["1\w \m M\w 1" below]
			& \MSpin\w \MSpin\w BH_+\rar["\mu\w 1" below]
			& \mathrlap{\MSpin\w BH_+}\hspace{1.25cm}\uar["\text{proj}"]
	\end{tikzcd}
\end{tiny}
\end{figure}
Here $\m M$ is the map of Thom spectra induced by the classifying map $\BH\to\BSpin$ of $-\tau$, which motivates theorem \ref{sixstep} (in particular, it demonstrates why computing $w(-\tau)$ is relevant).

As mentioned earlier, we will use the same names for several of these maps with $\MSpin$ replaced with $\MSpinc$. For example, $T\colon \Sigma^8\BPSp(3)\to\MSpinc$ and $\TC\colon\Sigma^4\BSU(3)\to\MSpinc$ together are used to prove theorem \ref{mymain}.

\section{The $\CP^2$-bundle transfer}

Analogously to the quaternionic case, the unitary group $\U(3)$ acts transitively on $S^5\subset\C^3$ yielding a transitive action on $\CP^2$. As before, the stabilizer of $[0:0:1]$ is $\SU(2,1)\coloneq\op{S}(\U(2)\times\U(1))$ and we hence obtain a bundle $\BSU(2,1)\to\BSU(3)$ with fiber $\CP^2$. This bundle admits a $\Spinc$ structure, giving a map $\BSU(3)\to\BSpinc$. Now a class in $\Omega_n^\Spinc\BSU(3)$ consists of an $n$-manifold $P$ with a stable normal $\Spinc$ structure along with a map $f\colon P\to \BSU(3)$. Let $\widehat P$ be the pullback of the bundle $\BSU(2,1)\to\BSU(3)$; hence $\widehat P$ is an $(n+4)$-dimensional $\Spinc$ manifold fibered over $P$. On bordism classes, this correspondence $[P]\to [\widehat P]$ geometrically describes the transfer map $\Omega_n^\Spinc\BSU(3)\to \Omega_{n+4}^\Spinc$.


We have the following maps of spectra.
\begin{figure}[h!]\centering
	\begin{tikzcd}
		\MSpinc\w\Sigma^4BG_+\ar[r,"1\w t"]\ar[rr,"T" below,bend right=15] & \MSpinc\w\MSpinc\ar[r,"\mu"] & \MSpinc\ar[r,"D"] & \ku
	\end{tikzcd}
\end{figure}

These induce maps on homology.
\begin{equation}\label{1}
	H_*\MSpinc\T H_*\Sigma^4BG_+\xrightarrow{1\T t_*} H_*\MSpinc\T H_*\MSpinc\xrightarrow{\mu} H_*\MSpinc\xrightarrow{D_*} \ku
\end{equation}


%
%

Recall $\m A_*=\Z_2[\xi_1,\xi_2,\xi_3,\ldots]$, where $\deg(\xi_i)=2^i-1$ with coproduct
\[\psi(\xi_n)=\sum_{i=0}^n\xi_{n-i}^{2^i}\T\xi_i=\xi_n\T 1+\xi_{n-1}^2\T\xi_1+\cdots+\xi_1^{2^{n-1}}\T\xi_{n-1}+1\T\xi_n.\]
Alternatively, one can use the Hopf algebra conjugates $\zeta_i$ of $\xi_i$, characterized by
\[\psi(\zeta_n)=\sum_{i=0}^n\zeta_i\T\zeta_{n-i}^{2^i}=1\T\zeta_n+\zeta_1\T\zeta_{n-1}^2+\cdots+\zeta_{n-1}\T\zeta_1^{2^{n-1}}+\zeta_n\T 1. \]
The dual of $\Sq^n=\Sq(n)$ is $\xi_1^n$, while $Q_n=\Sq(0,\ldots, 0,1)$ (with $n$ zeros followed by a $1$) is dual to $\xi_{n+1}$.

Note that $H_*H\Z_2$ is also the dual $\m A_*$ of the Steenrod algebra $\m A$. For the subalgebra $E(1)$ generated by $Q_0$ and $Q_1$, we have $H^*\ku=\m A\hmod E(1)=\Z_2\T_{E(1)}\m A$. Dually, $H_*\ku=\Z_2\square_{E(1)_*}\m A_*$.

Theorem \ref{reducedc} stated that, if $Y$ is an $\MSpinc$-module spectrum for which $H_*Y$ is bounded below and of finite type, there is a functorial isomorphism of $\m A_*$-comodules
\[H_*Y\to \m A_*\square_{E(1)_*}\ol{H_*Y}.\]
Applying theorem \ref{reducedc} to (\ref{1}) gives
\[\ol{H_*\MSpinc}\T H_*\Sigma^4 BG_+\xrightarrow{1\T\ol t_*}\ol{H_*\MSpinc}\T H_*\MSpinc\xrightarrow{\mu}\ol{H_*\MSpinc}\xrightarrow{\ol D_*}\ol{H_*\ku}. \]
However, $\ol{H_*\ku}=\Z_2$, and one can identify $\ol D_*$ with the augmentation homomorphism of $\ol{H_*\MSpinc}$. It follows that $\ker\ol D_*$ is generated over $\Z_2$ by elements which have at least one nontrivial indecomposable factor. These elements can be written $\mu(x\T y)$ for some $x\in \ol{H*\MSpinc}$ and $y\in Q\ol{H_*\MSpinc}$. (Here, $x$ may be $1$, and, for an algebra $A$ with augmentation ideal $I(A)$, $QA$ denotes the indecomposable quotient $I(A)/\mu(I(A)\T I(A))$.) Thus to show that $\ol T_*$ surjects onto $\ker D_*$, it suffices to show that the composition $H_*\Sigma^4 BG_+\to H_*\MSpinc\to \ol{H_*\MSpinc}\to Q\ol{H_*\MSpinc}$ is surjective.

We have $QH_n\MSpinc\cong\Z_2$ for $n\ge 2$, $n\neq 2^k+1$ and $QH_n\MSpinc=0$ otherwise. Further, the projection $QH_n\MSpinc\to Q\ol{H_n\MSpinc}$ is an isomorphism for $n\ge 4$, $n\neq 2^k\pm 1$, and $Q\ol{H_n\MSpinc}=0$ for $n<4$ or $n=2^k\pm 1$. We have
	\[\ol{H_*\MSpinc}=\Z_2\left[x_n^{\B(n)}:n\ge 4, n\neq 2^k\pm 1\right], \]
	where $\B(n)=2$ if $\A(n)<3$ and $\B(n)=1$ for $\A(n)\ge 3$ (here $\A(n)$ is the number of nonzero terms in the base-$2$ expansion of $n$). Ordering the generators by their lower indices,
	\[H_*\MSpinc=\Z_2\left[x_1^2,x_2^2,x_3^2,x_4^2,x_5^2,x_6^2,x_7,x_8^2,x_9^2,x_{10}^2,x_{11},\ldots \right]. \]
	Since $R=\Z_2[x_1^2,x_3^2,x_7,x_{15},\ldots]$, we have
	\[\ol{H_*\MSpinc}=\Z_2\T_R H_*\MSpinc=\Z_2\left[x_2^2,x_4^2,x_5^2,x_6^2,x_8^2,x_9^2,x_{10}^2,x_{11},\ldots \right]. \]

We must show that $H_n\Sigma^4BG_+\to QH_n\MSpinc$ is surjective for $n\ge 4$, $n\neq 2^k\pm 1$. Dually, we must show $PH^n\MSpinc\to H^{n-4}BG_+$ is injective for $n\ge 4$, $n\neq 2^k\pm 1$. Recall that the map $BG_+\to \MSpinc$ can be decomposed into $\Sigma^4BG_+\xrightarrow{T(\pi)}M(-\tau)\xrightarrow{M(c)}\MSpinc$, where $\pi\colon BH\to BG$ is the bundle map, $T(\pi)$ is the Thom collapse map, and $c\colon BH\to \BSpinc$ is classifies the complement $-\tau$ of $\tau$, the bundle along the fibers of $\pi$. The $h$-space inverse $\BSpinc\to\BSpinc$ provides a homotopy equivalence $\MSpinc\to\MSpinc$ allowing us to replace $M(\-\tau)$ with $M(\tau)$. On cohomology, we have the maps
\[H^n\MSpinc\xrightarrow{c(\tau)^*} H^nBH\xrightarrow{T(\pi)^*} H^{n-4}BG. \]

\begin{itemize}
  \item $H^*BH=\Z_2[x_2,x_4]$,
  \item $H^*BG=\Z_2[y_4,y_6]$,
  \item $w(\tau)=1+x_2+x_4$,
  \item $\pi^*y_4=x_2^2+x_4$ and $\pi^*y_6=x_2x_4$,
  \item we can write $H^*BH$ as a free $H^*BG$-module with basis $1,x_2,x_2^2$,
  \item and the transfer map $H^nBH\to H^{n-4}BG$ is the $H^*BG$-module map taking $x=r_0(x)+r_1(x)x_2+r_2(x)x_2^2$ to $r_2(x)$, where $r_i(x)\in H^*BG$.
\end{itemize}

 


\begin{appendicesmulti}

\chapter{The $\m A(1)$-action on $H^*\BPSp(3)$}

In this section, let $G=\PSp(3)$ be the quotient of $\Sp(3)$ by its center $\pm I$ (here $I$ is the identity matrix), and let $H=\PSp(2,1)$ be the quotient of $\Sp(2)\times \Sp(1)$ by $\pm I$. Notice that $H$ is a subgroup of $G$ and the inclusions
\[\Sp(1,1,1)\to \Sp(2,1)\to \Sp(3)\]
induce inclusions
\[i_1\colon \PSp(1,1,1)\to H\and i_2\colon H\to G\]
(here $\PSp(1,1,1)\coloneqq\op{P}(\Sp(1)^3)$). Let $i\colon \Z_2^4\to \PSp(3)$ be the composition
\[\Z_2^2\times\Z_2^2\xrightarrow{j\times 1} \PSp(1)\times\Z_2^2\xrightarrow{\Delta\times j_1\times j_2} \PSp(1,1,1), \]
where $j$ maps $(1,0)$ to $i$ and $(0,1)$ to $j$, $\Delta$ is the diagonal, and $j_n$ sends each generator to $-1$ in the $n$-th factor. Thus,
\begin{align*}
	(1,0,0,0)&\mapsto [i,i,i]\\
	(0,1,0,0)&\mapsto [j,j,j]\\
	(0,0,1,0)&\mapsto [-1,1,1]\\
	(0,0,0,1)&\mapsto [1,-1,1],
\end{align*}
etc. For example, $(1,1,0,1)\mapsto [k,-k,k]$. (Note that the images of $[i,i,i]$ and $[j,j,j]$ indeed commute in $\PSp(1)^3)$.) Writing $\Z_2$ as the multiplicative group $\pm 1$, we have
\[(x_1,x_2,y_1,y_2)\mapsto [(-1)^{y_1}i^{x_1}j^{x_2},(-1)^{y_2}i^{x_1}j^{x_2},i^{x_1}j^{x_2}]. \]

Given a compact Lie group $K$, any representation $\rho\colon K\to \GL_n(\R)$ gives rise to a real vector bundle over $BK$ which we denote $E\rho$. Define a four-dimensional real representation $R_{ij}$ of $\PSp(1,1,1)$ acting on $\H$ via $[h_1,h_2,h_3]\cdot x=h_ix\ol{h_j}$. If $i=j$, this action fixes the $\R$-span of $1$, so we can decompose $R_{ii}$ as the sum of a trivial representation and a $3$-dimensional representation $R_i$. The inclusion $\Z_2^4\times\Z_2^2\xrightarrow{i}\PSp(3)$ then defines a bundle over $B((\Z/2)^2\times(\Z/2)^2)\cong (\R P^\infty)^4$, and we can identify $w(ER_{ij})$ with its image in $H^*((\R P^\infty)^4;\Z/2)\cong \Z/2[x_1,x_2,y_1,y_2]$.
\\

We will now compute how $\rho_{ij}$ transforms the elements $x_i,y_j$. In general, we have
\[ \rho_{ij}(x_1,x_2,y_1,y_2)(h)=[(-1)^{y_1}i^{x_1}j^{x_2},(-1)^{y_2}i^{x_1}j^{x_2},i^{x_1}j^{x_2}]\cdot h. \]

Case 1: $i=j$. Then
\begin{align*}
	\rho_{ii}(x_1,x_2,y_1,y_2)(h)&=[(-1)^{y_1}i^{x_1}j^{x_2},(-1)^{y_2}i^{x_1}j^{x_2},i^{x_1}j^{x_2}]\cdot h\\
	&=i^{x_1}j^{x_2}h\ol{i^{x_1}j^{x_2}}\\
	&=i^{x_1}j^{x_2}h\ol{j^{x_2}}\ol{i^{x_1}}\\
	&=(-1)^{x_1+x_2}i^{x_1}j^{x_2}hj^{x_2}i^{x_1}.
\end{align*}
In particular,
\[\rho_{ii}(x_1,x_2,y_1,y_2)(1)=(-1)^{x_1+x_2}(i^{x_1}(j^{x_2}j^{x_2})i^{x_1})=1.\]
When $h=i$, note that $jij=jk=i$, so
\begin{align*}
	\rho_{ii}(x_1,x_2,y_1,y_2)(i)&=(-1)^{x_1+x_2}i^{x_1}(j^{x_2}ij^{x_2})i^{x_1}\\
	&=(-1)^{x_1+x_2}(i^{x_1})i(i^{x_1})\\
	&=(-1)^{x_1+x_2}i^{2x_1+1}\\
	&=(-1)^{x_2}i.
\end{align*}
Next
\begin{align*}
	\rho_{ii}(x_1,x_2,y_1,y_2)(j)&=(-1)^{x_1+x_2}i^{x_1}(j^{x_2}jj^{x_2})i^{x_1}\\
	&=(-1)^{x_1+x_2}(i^{x_1})j^{2x_2+1}(i^{x_1})\\
	&=(-1)^{x_1}i^{x_1}ji^{x_1}\\
	&=(-1)^{x_1}j
\end{align*}
and
\begin{align*}
	\rho_{ii}(x_1,x_2,y_1,y_2)(k)&=(-1)^{x_1+x_2}i^{x_1}(j^{x_2}kj^{x_2})i^{x_1}\\
	&=(-1)^{x_1+x_2}(i^{x_1})k(i^{x_1})\\
	&=(-1)^{x_1+x_2}k.
\end{align*}

Thus the total Stiefel-Whitney class of $i^*(ER_{ii})$ is thus
\[Bi^*(w(ER_{ii}))=(1+x_1)(1+x_2)(1+x_1+x_2). \]

Case 2: $i\neq j$. We have
\begin{align*}
	\rho_{23}(x_1,x_2,y_1,y_2)(h)&=(-1)^{y_2}\rho_{11}(x_1,x_2,y_1,y_2)(h)\\
	\rho_{13}(x_1,x_2,y_1,y_2)(h)&=(-1)^{y_1}\rho_{11}(x_1,x_2,y_1,y_2)(h)\\
	\rho_{12}(x_1,x_2,y_1,y_2)(h)&=(-1)^{y_1+y_2}\rho_{11}(x_1,x_2,y_1,y_2)(h).
\end{align*}

Thus, if we let $y_3=y_1+y_2$,
\begin{align*}
	Bi^*(w(ER_{23}))&=(1+y_2)(1+x_1+y_2)(1+x_2+y_2)(1+x_1+x_2+y_2)\\
	Bi^*(w(ER_{13}))&=(1+y_1)(1+x_1+y_1)(1+x_2+y_1)(1+x_1+x_2+y_1)\\
	Bi^*(w(ER_{12}))&=(1+y_3)(1+x_1+y_3)(1+x_2+y_3)(1+x_1+x_2+y_3).
\end{align*}

Next we compute $w(\BG)$ and the action by $\Sq^1, \Sq^2$. First,
\[Bi^*(w(ER_{ii}))=1+(x_1^2+x_1x_2+x_2^2)+x_1x_2(x_1+x_2)\pmod 2. \]
Let $t_2=x_1^2+x_1x_2+x_2^2$ and $t_3=x_1x_2(x_1+x_2)$ so that $w(i^*ER_{ii})=1+t_2+t_3$. When $i\neq j$, we define as $y_\ell$,
\begin{align*}
	Bi^*(w(ER_{ij}))&=1+x_1^2+x_1x_2+x_2^2+x_1x_2(x_1+x_2)+x_1x_2(x_1+x_2)y_\ell+(x_1^2+x_1x_2+x_2^2)y_\ell^2+y_\ell^4\\
	&=1+t_2+t_3+t_3y_\ell+t_2y_\ell^2+y_\ell^4.
\end{align*}
We can write this fourth order term as $s_k=t_3y_\ell+t_2y_\ell^2+y_\ell^4$, where $\{i,j,k\}=\{1,2,3\}$. To summarize,
\begin{equation}\label{1}
	Bi^*(w(ER_{ij}))=\begin{cases}
	1+t_2+t_3\pw i=j\\
	1+t_2+t_3+s_k\pw \{i,j,k\}=\{1,2,3\}
\end{cases}
\end{equation}
where
\begin{align*}
	s_1&=t_3y_2+t_2y_2^2+y_2^4\\
	s_2&=t_3y_1+t_2y_1^2+y_1^4\\
	s_3&=t_3y_3+t_2y_3^2+y_3^4.
\end{align*}

\begin{remark}
	There seems to be a minor indexing error in Stolz's work. He claims $\rho_{23}=(-1)^{y_1}\rho_{11}$ and $\rho_{13}=(-1)^{y_2}\rho_{11}$. Then $s_k=t_3y_k+t_2y_k^2+y_k^4$, which cleans up some notation. This essentially amounts to switching the role of $y_1$ and $y_2$ in the map $i$. In his version, $(0,0,1,0)$ would map to $[1,-1,1]$ and $(0,0,0,1)$ maps to $[-1,1,1]$.
\end{remark}

Next we consider the adjoint representation $\f g$ of $G$, which is equivalent to the conjugation action of $G$ skew-Hermitian $3\times 3$ quaternionic matrices.

We claim that, restricted to $P(\Sp(1)^3)$, the representation $\f g$ decomposes as
\[R_1\oplus R_2\oplus R_3\oplus R_{23}\oplus R_{13}\oplus R_{12}.\]

The restriction here is from the $i_2\circ i_1$, where

\[(\Z/2)^2\times(\Z/2)^2\xrightarrow{i} P(\Sp(1)^3)\xrightarrow{i_1} P(\Sp(2)\times\Sp(1))\xrightarrow{i_2} P\Sp(3). \]

It follows that

\[B(i_2i_1i)^*(w(E\f g))=t^3(t+s_1)(t+s_2)(t+s_3), \]

where $t=1+t_3+t_3$. To simplify this, it helps to notice that
\begin{align*}
	s_1+s_2&=t_3(y_1+y_2)+t_2(y_1^2+y_2^2)+y_1^4+y_2^4\\
	&=t_3(y_1+y_2)+t_2(y_1+y_2)^2+(y_1+y_2)^4\\
	&=s_3.
\end{align*}

We have
\[B(i_2i_1i)^*(w(E\f g))=(s_1^2s_2+s_1s_2^2)t^3+(s_1^2+s_1s_2+s_2^2)t^4+t^6 \]

and we let $t_8=s_1^2+s_1s_2+s_2^2$ and $t_{12}=s_1^2s_2+s_1s_2^2$ so that
\begin{equation}\label{2}
	B(i_2i_1i)^*(w(E\f g))=t_{12}t^3+t_8t^4+t^6.
\end{equation}

Now we can put this together. From (\ref{1}), we know that $t_2,t_3$ are in the image of $Bi^*$. We can consider $Bi_1^*$ and $Bi_2^*$ as bundles with respective fibers $\H P^1$ and $\H P^2$, and Hurewicz's theorem shows that $Bi_1^*$ and $Bi_2^*$ are isomorphisms on cohomology groups of degree at most $3$. In particular, $t_2$ and $t_3$ are in the image of $Bi_2i_1i^*$. Using Kono's computation of $H^*\BPSp(3)$, we can thus identify the generators $t_2$ and $t_3$ with those in $H^*\BPSp(3)$.
\\

On the other hand, (\ref{2}) shows $t_8$ and $t_{12}$ are also in the image of $Bi_2i_1i^*$. These elements are polynomials in $s_k$, so they are not in the polynomial ring $\Z/2[t_2,t_3]$. We can thus identify $t_8$ and $t_{12}$ with the generators in $H^*\BPSp(3)$.

\section{Action by $\m A(1)$ on $\BPSp(3)$}

In the last section, we identified $H^*\BPSp(3)=\Z/(2)[t_2,t_3,t_8,t_{12}]$ with the subring of $\Z_2[x_1,x_2,y_1,y_2]$ via
\begin{align*}
	t_2&=x_1^2+x_1x_2+x_2^2\\
	t_3&=x_1x_2(x_1+x_2)\\
	t_8&=s_1^2+s_1s_2+s_2^2\\
	t_{12}&=s_1s_2(s_1+s_2)
\end{align*}

where

\[s_1=t_3y_2+t_2y_2^2+y_2^4\and s_2=t_3y_1+t_2y_1^2+y_1^4. \]

For a generator $x\in H^*B\Z_2$, we have $\Sq^1 x=x^2$ and $\Sq^2 x=0$. We use this to compute the actions of $\Sq^1$ and $\Sq^2$ on cohomology classes.

\begin{theorem}\label{sqc}
	Identifying $H^*\BPSp(3)=\Z_2[t_2,t_3,t_8,t_{12}]$, we have
	\begin{align*}
	&\Sq^1t_2=t_3& &\Sq^1t_3=0& &\Sq^1t_8=0& &\Sq^1t_{12}=0&\\
	&\Sq^2t_2=t_2^2& &\Sq^2t_3=t_2t_3& &\Sq^2t_8=0& &\Sq^2t_{12}=t_2t_{12}.&
\end{align*}
\end{theorem}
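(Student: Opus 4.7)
The plan is to exploit the fact, already established in the identification of generators, that the composition $i\colon (\Z/2)^4\to\PSp(3)$ induces a ring homomorphism $Bi^*\colon H^*\BPSp(3)\to\Z_2[x_1,x_2,y_1,y_2]$ under which $t_2,t_3,t_8,t_{12}$ are represented by the explicit polynomials displayed just before Theorem \ref{sqc}. Since $Bi^*$ is an $\m A$-module map, it suffices to compute $\Sq^1$ and $\Sq^2$ on these explicit polynomials and recognize the results as the stated expressions in the $t_i$. On degree-one generators we have $\Sq^1(x)=x^2$ and $\Sq^k(x)=0$ for $k\ge 2$, so every computation is a mechanical application of the Cartan formula.

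First I would dispatch $t_2$ and $t_3$. For $t_2=x_1^2+x_1x_2+x_2^2$, the identity $\Sq^n(x)=x^2$ when $\deg x=n$ immediately gives $\Sq^2(t_2)=t_2^2$. For $\Sq^1$, the Cartan formula (which makes $\Sq^1$ a derivation in characteristic $2$) kills each square and leaves $\Sq^1(x_1x_2)=x_1^2x_2+x_1x_2^2$, so $\Sq^1(t_2)=x_1x_2(x_1+x_2)=t_3$. The value $\Sq^1(t_3)$ then follows from $\Sq^1\Sq^1=0$; for $\Sq^2(t_3)$ I expand $\Sq^2(x_1^2x_2+x_1x_2^2)$ directly and verify the answer agrees with $t_2t_3$ by expanding the latter product modulo $2$.

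Next I would handle $t_8$ and $t_{12}$ by first computing the Steenrod squares of the intermediate classes $s_1=t_3y_2+t_2y_2^2+y_2^4$ and $s_2=t_3y_1+t_2y_1^2+y_1^4$. Using the rules above together with $\Sq^2(z^2)=\Sq^1(z)^2$, a short calculation shows
\begin{equation*}
\Sq^1(s_i)=0 \qquad\text{and}\qquad \Sq^2(s_i)=t_2\, s_i
\end{equation*}
for $i=1,2$. From this the claims for $t_8=s_1^2+s_1s_2+s_2^2$ and $t_{12}=s_1s_2(s_1+s_2)$ drop out formally: $\Sq^1$ vanishes because it is a derivation annihilating each $s_i$ and each square, while the Cartan formula applied to $s_1^2$, $s_2^2$, and $s_1s_2$ yields $\Sq^2(t_8)=2\,t_2s_1s_2=0$, and applied to $s_1^2s_2$, $s_1s_2^2$ yields $\Sq^2(t_{12})=t_2(s_1^2s_2+s_1s_2^2)=t_2t_{12}$.

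The only non-routine step is the reduction to computing on $s_1,s_2$, so the proof is essentially bookkeeping with Cartan. No genuine obstacle arises; the main thing to watch is keeping track of the cancellations in $\Sq^2$ of a product of two classes with $\Sq^1\neq 0$, where the cross term $\Sq^1(a)\Sq^1(b)$ must be carefully included (and in our case repeatedly cancels against another term mod $2$).
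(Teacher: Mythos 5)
Your proposal is correct and follows essentially the same route as the paper: pull everything back along $Bi^*$ to $\Z_2[x_1,x_2,y_1,y_2]$, compute $\Sq^1,\Sq^2$ on $t_2,t_3$ directly, establish $\Sq^1(s_i)=0$ and $\Sq^2(s_i)=t_2s_i$, and then deduce the values on $t_8,t_{12}$ by the Cartan formula. The only cosmetic differences (deriving $\Sq^1 t_3=0$ from $\Sq^1\Sq^1=0$ rather than expanding, and using $\Sq^2(z^2)=\Sq^1(z)^2$) do not change the argument.
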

\begin{proof}
Using naturality, we compute
\begin{align*}
	\Sq^1(t_2)&=\Sq^1(x_1^2+x_1x_2+x_2^2)\\
	&=\Sq(x_1)x_2+x_1\Sq(x_2)\\
	&=x_1^2x_2+x_1x_2^2\\
	&=t_3
\end{align*}
and
\begin{align*}
	\Sq^2(t_2)&=\Sq^2(x_1^2+x_1x_2+x_2^2)\\
	&=x_1^4+\Sq^2(x_1)x_2+\Sq^1(x_1)\Sq^1(x_2)+x_1\Sq^2(x_2)+x_2^4\\
	&=x_1^4+x_1^2x_2^2+x_2^4\\
	&=(x_1^2+x_1x_1+x_2^2)^2\\
	&=t_2^2
\end{align*}
(which is expected since $t_2$ has degree $2$). Next
\begin{align*}
	\Sq^1(t_3)&=\Sq^1(x_1x_2(x_1+x_2))\\
	&=\Sq^1(x_1x_2)(x_1+x_2)+x_1x_2(\Sq^1(x_1)+\Sq^1(x_2))\\
	&=(x_1^2x_2+x_1x_2^2)(x_1+x_2)+x_1x_2(x_1^2+x_2^2)\\
	&=x_1x_2(x_1+x_2)^2+x_1x_2(x_1^2+x_2^2)\\
	&=x_1x_2(x_1^2+x_2^2)+x_1x_2(x_1^2+x_2^2)\\
	&=0
\end{align*}
and
\begin{align*}
	\Sq^2(t_3)&=\Sq^2(x_1x_2(x_1+x_2))\\
	&=\Sq^2(x_1x_2)(x_1+x_2)+\Sq^1(x_1x_2)\Sq^1(x_1+x_2)+x_1x_2\Sq^2(x_1+x_2)\\
	&=(x_1x_2)^2(x_1+x_2)+(x_1^2x_2+x_1x_2^2)(x_1^2+x_2^2)\\
	&=(x_1x_2)^2(x_1+x_2)+(x_1x_2)(x_1+x_2)^3\\
	&=t_3(x_1x_2+(x_1+x_2)^2)\\
	&=t_2t_3.
\end{align*}

Continuing,
\begin{align*}
	\Sq^1(s_1)&=\Sq^1(t_3y_2+t_2y_2^2+y_2^4)\\
	&=\Sq^1(t_3)y_2+t_3y_2^2+\Sq^1(t_2)y_2^2\\
	&=t_3y_2^2+t_3y_2^2\\
	&=0
\end{align*}
and similarly $\Sq^1(s_2)=0$. Further,

\begin{align*}
	\Sq^2(s_1)&=\Sq^2(t_3y_2+t_2y_2^2+y_2^4)\\
	&=\Sq^2(t_3)y_2+t_3\Sq^2(y_2)+t_2^2y_2^2+t_2\Sq^2(y_2^2)+\Sq^2(y_2^4)\\
	&=t_2t_3y_2+t_2^2y_2^2+t_2y_2^4+\Sq^2(y_2^2)y_2^2+\Sq^1(y_2^2)\Sq^1(y_2^2)+y_2^2\Sq^2(y_2^2)\\
	&=t_2t_3y_2+t_2^2y_2^2+t_2y_2^4+y_2^6+y_2^6\\
	&=t_2(t_3y_2+t_2y_2^2+y_2^4)\\
	&=t_2s_1
\end{align*}

and similarly $\Sq^2(s_2)=t_2s_2$. Finally, we can compute

\begin{align*}
	\Sq^1(t_8)&=\Sq^1(s_1^2+s_1s_2+s_2^2)\\
	&=\Sq^1(s_1)s_2+s_1\Sq^1(s_2)\\
	&=0
\end{align*}

as well as

\begin{align*}
	\Sq^2(t_8)&=\Sq^2(s_1^2+s_1s_2+s_2^2)\\
	&=\Sq^2(s_1)s_1+\Sq^1(s_1)\Sq^1(s_1)+s_1\Sq^2(s_1)\\
	&\qquad +\Sq^2(s_1)s_2+\Sq^1(s_1)\Sq^1(s_2)+s_1\Sq^2(s_2)\\
	&\qquad +\Sq^2(s_2)s_2+\Sq^1(s_2)\Sq^1(s_2)+s_2\Sq^2(s_2)\\
	&=\Sq^2(s_1)s_2+s_1\Sq^2(s_2)\\
	&=t_2s_1s_2+t_2s_1s_2\\
	&=0.
\end{align*}

Then

\begin{align*}
	\Sq^1(t_{12})&=\Sq^1(s_1s_2(s_1+s_2))\\
	&=\Sq^1(s_1s_2)(s_1+s_2)+s_1s_2\Sq^1(s_1+s_2)\\
	&=0
\end{align*}

and

\begin{align*}
	\Sq^2(t_{12})&=\Sq^2(s_1s_2(s_1+s_2))\\
	&=\Sq^2(s_1s_2)(s_1+s_2)+\Sq^1(s_1s_2)\Sq^1(s_1+s_2)+s_1s_2\Sq^2(s_1+s_2)\\
	&=(\Sq^2(s_1)s_2+\Sq^1(s_1)\Sq^1(s_2)+s_1\Sq^2(s_2))(s_1+s_2)+t_2s_1s_2(s_1+s_2)\\
	&=t_2t_{12}.
\end{align*}
\end{proof}

\section{Cohomology of $\BPSp(2,1)$}

We write $H=\PSp(2,1)\coloneqq \op{P}(\Sp(2)\times\Sp(1))$ and $G=\PSp(3)$. We have the bundle
\[\H P^2=G/H\to BH\xrightarrow{\pi} BG. \]

Let $\tau$ denote the corresponding vertical bundle along the fibers of $\pi$.

Also recall the maps
\[(\Z/2)^2\times(\Z/2)^2\xrightarrow{i}P(\Sp(1)^3)\xrightarrow{i_1}H\xrightarrow{i_2}G. \]
Note that $Bi_2=\pi$.

Claim 2: restricted to $H=P(\Sp(2)\times\Sp(1))$, the representation $\f g$ splits as $\f h\oplus\f h^\perp$, with $\f h$ the adjoint representation of $H$ and $\f h^\perp\cong \tau$. When restricted to $P(\Sp(1)^3)$, $\f h^\perp$ splits as $R_{13}\oplus R_{23}$.

Assuming claim $2$ holds,
\begin{align*}
	B(i_1i)^*w(\tau)&=(t+s_1)(t+s_2)\\
	&=t^2+(s_1+s_2)t+s_1s_2\\
	&=1+s_1+s_2+t_2^2+t_3^2+(s_1+s_2)t_2+(s_1+s_2)t_3+s_1s_2.
\end{align*}

Define
\[u_2=t_2\qquad u_3=t_3\qquad u_4=s_1+s_2\qquad u_8=s_1s_2. \]

Then
\[B(i_1i)^*w(\tau)=1+(u_4+u_2^2)+(u_3^2+u_2u_4)+u_3u_4+u_8. \]

This means that the class $w_4(\tau)$ restricts to a nontrivial element in $H^4\H P^2$, and thus the map $H^*BH\to H^*G/H$ is surjective. The Serre spectral sequence of $\pi$ collapses, and $\pi^*\colon H^*BG\to H^*BH$ is thus injective. Moreover, by Leray-Hirsch, $H^*BH$ is a free $H^*BG$ module with basis $\{1,w_4(\tau),w_4(\tau)^2\}$. We can therefore identify $H^*BH$ with the subring of $\Z/(2)[u_2,u_3,u_4,u_8]$ where $\pi^*(t_2)=u_2$, $\pi^*(t_3)=u_3$, $\pi^*(t_8)=u_4^2+u_8$, and $\pi^*(t_{12})=u_4u_8$. Under this identification, we have
\[w(\tau)=1+(u_4+u_2^2)+(u_3^2+u_2u_4)+u_3u_4+u_8. \]

\chapter{Primitive Generators}

Since $\RP^\infty$ is an $h$-space, the cohomology forms a Hopf algebra. As an algebra, $H^*\RP^\infty$ is the polynomial ring $\Z_2[u]$, where $\deg(u)=1$. The coalgebra structure is determined by the fact that the coproduct $\D$ is an algebra homomorphism and $\D(u)=1\T u+u\T 1$. For the algebra structure in homology, $H_*\RP^\infty=\W\left(x_1,x_2,x_4,x_8,\ldots\right)$ is the exterior algebra on generators $x_n$, where $x_n$ is the linear dual of $u^{2^n}$.

We know that $H^*BO=\Bbb Z_2[w_1,w_2,w_3,\ldots]$. Dually, $H_*\BO=\Z_2[u_1,u_2,u_3,\ldots]$, where $u_i$ is the linear dual of $w_1^i$. The canonical inclusion $j\colon \BO(1)\to \BO$ then satisfies $j_*x_n=u_{2^n}$. Clearly there is one indecomposable element in each degree, hence one primitive in each degree. In this case, the primitive elements are just $s_n$ (the symmetric polynomial generated by $x_1^n+\cdots+x_k^n$ written in terms of the elementary symmetric polynomials $w_i$, where $k\ge n$).
s

As an algebra, $H^*\BSO=\Z_2[w_n:n\ge 2]$. There is one indecomposable element in degree $n$ for every $n\ge 2$, so, dually, $P^nH^*\BSO$ is $\Z_2$ for $n\ge 2$ and $0$ otherwise. The primitives of $H^*\BSO$ are not just those for $H^*\BO$ in degrees $n\ge 2$. The reason is because $s_n$ is equal to $(-1)^{n+1}nw_n$ plus decomposables, so in even degrees we need to know that $s_n$ has a nontrivial decomposable term which is not a multiple of $w_1$. For example,
\[s_4=w_1^4-4w_1^2w_2+4w_1w_3+2w_2^2-4w_4=w_1^4\in H^*\BO,\]
so $s_4$ vanishes when restricted to $H^*\BSO$. However, we can use the following lemma.
\begin{lemma}
	If $x$ is primitive, then $x^{2^n}$ is primitive for all $n$.
\end{lemma}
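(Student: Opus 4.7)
The plan is to show that squaring preserves primitivity in characteristic $2$, and then iterate. Since we are working with $\Z_2$-coefficients throughout the paper, the coproduct $\D$ is an algebra homomorphism on a commutative (indeed cocommutative) Hopf algebra over $\Z_2$, so it commutes with the Frobenius endomorphism $y\mapsto y^2$.

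Concretely, if $x$ is primitive, then by definition $\D(x)=1\T x+x\T 1$. Because $\D$ is a ring map,
\[\D(x^2)=\D(x)^2=(1\T x+x\T 1)^2=1\T x^2+2(x\T x)+x^2\T 1.\]
In $\Z_2$-coefficients the middle term vanishes (and graded signs are irrelevant), giving $\D(x^2)=1\T x^2+x^2\T 1$, so $x^2$ is primitive. The general case $x^{2^n}$ follows by induction on $n$: if $x^{2^{n-1}}$ is primitive, apply the same argument to it.

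The proof is essentially a one-line computation; there is no real obstacle, but the key conceptual point to flag is that this argument is special to characteristic $2$ (and more generally to prime characteristic with exponent a power of that prime). It relies only on $\D$ being a ring homomorphism and $2=0$ in the coefficient ring, so no further structure on the Hopf algebra is needed beyond what is already in place.
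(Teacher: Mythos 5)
Your proof is correct and is essentially the paper's own argument: both rely on $\D$ being a ring homomorphism and the Frobenius identity in characteristic $2$ (the paper applies $(1\T x+x\T 1)^{2^n}=1\T x^{2^n}+x^{2^n}\T 1$ in one step, while you square once and induct, which is the same computation).
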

\begin{proof}
	Using the Frobenius map,
	\[\D(x^{2^n})=\D(x)^{2^n}=(1\T x+x\T 1)^{2^n}=1\T x^{2^n}+x^{2^n}\T 1.\]
\end{proof}

For $n$ even, we write $n=2^km$ for some $k\ge 1$ and odd $m$. Then as long as $m\ge 1$, we know $s_m$ is a nontrivial primitive in $H^*\BSO$. The primitive in degree $n$ is therefore $s_m^{2^k}$. We still have not accounted for the primitives whose degrees are powers of $2$. For degree reasons, $w_2$ is primitive (note that $w_2\neq s_2$ since $s_2=s_1^2\pmod 2$). This means $w_2^{2^n}$ is the primitive element of degree $2^{n+1}$. We summarize this below.
\begin{theorem}\label{bsoprimitives}
	The coalgebra $H^*\BSO$ has one primitive element $v_n$ of degree $n$ for all $n\ge 2$, where
	\[v_n=\begin{cases}
		w_2\pw n=2\\
		s_n\pw n\text{ is odd}\\
		s_{n/2}^2\pw n>2\text{ is even}.\\
	\end{cases} \]
\end{theorem}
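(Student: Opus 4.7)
The plan is to leverage the Milnor--Moore-style duality between primitives and indecomposables. The paragraph preceding the theorem establishes $QH^n\BSO \cong \Z_2$ for $n \ge 2$ and zero otherwise, and since $H^*\BSO$ is a connected commutative cocommutative Hopf algebra of finite type over $\Z_2$, the same one-dimensional conclusion holds for $P^nH^*\BSO$. Consequently, once I exhibit any nonzero primitive in each degree $\ge 2$, the dimension count forces it (up to scalar, hence identically in $\Z_2$) to agree with the advertised $v_n$.

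First I would verify primitivity of each candidate. For $v_2 = w_2$, the vanishing $H^1\BSO = 0$ collapses the Whitney-sum coproduct formula $\Delta(w_2) = \sum_{i+j=2} w_i \otimes w_j$ down to $w_2 \otimes 1 + 1 \otimes w_2$. For odd $n$, I would pull back via the splitting principle to $(\RP^\infty)^N$, where $s_n$ becomes the power sum $\sum_i x_i^n$; since power sums are additive under Whitney sum, $\Delta(s_n) = s_n \otimes 1 + 1 \otimes s_n$ already in $H^*\BO$, and primitivity descends to the Hopf quotient $H^*\BO \twoheadrightarrow H^*\BSO$ that kills $w_1$. For the remaining even $n$, the already-stated Frobenius lemma (squares of primitives are primitive) reduces the claim to the $n = 2$ or odd-$n$ case.

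Next I would check nontriviality after the quotient by the ideal $(w_1)$. Newton's identity gives $s_n = (-1)^{n+1} n\, w_n + (\text{monomials involving at least two } w_i)$; since $w_n$ is itself a polynomial generator, no decomposable monomial can reproduce it, so the coefficient of the pure monomial $w_n$ in the full expansion is exactly $(-1)^{n+1} n$, which is $1 \pmod 2$ for odd $n$. Thus $s_n$ survives to a nonzero class in $H^*\BSO$, and its Frobenius square $s_n^{2^k}$ carries a nonzero $w_n^{2^k}$ monomial. This yields $v_n = s_{n/2}^2$ for every even $n$ whose odd part is at least $3$, after invoking the identity $s_{2m} \equiv s_m^2 \pmod{(w_1)}$ (verified by a direct Newton recursion).

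The main obstacle is the powers-of-two case $n = 2^k$: the formula $s_{n/2}^2$ degenerates because the odd part of $n$ is $1$ and $s_1 = w_1 \mapsto 0$ in $H^*\BSO$. Here I would separately set $v_{2^k} = w_2^{2^{k-1}}$, whose primitivity is immediate from Frobenius applied to $v_2 = w_2$ and whose nontriviality is automatic since it is a nonzero monomial in the polynomial ring $\Z_2[w_n : n \ge 2]$. Morally, the right reading of the theorem's formula is the recursion $v_n = v_{n/2}^2$ for even $n > 2$, which terminates at $v_2 = w_2$ precisely when the binary expansion of $n$ bottoms out at a power of two; everything else is bookkeeping with Newton's identity and the dimension count.
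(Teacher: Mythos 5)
Your argument is essentially the paper's own proof: the one-primitive-per-degree count, primitivity of the power sums $s_n$ for odd $n$ (via the splitting principle and passage to the Hopf quotient killing $w_1$), the Frobenius lemma to handle even degrees, the separate case $n=2^k$ handled by $w_2^{2^{k-1}}$, and the recursive reading $v_n=v_{n/2}^2$ of the displayed formula --- which is indeed how it must be read at powers of two, where $s_{n/2}$ itself already vanishes in $H^*\BSO$. The one step I would repair (and the paper's ``so, dually'' elides the same point) is the dimension count: the principle that $\dim P^n=\dim Q^n$ for a connected commutative cocommutative Hopf algebra of finite type over $\Z_2$ is false in characteristic $2$; for $\Z_2[x]$ with $x$ primitive one has $Q^{2|x|}=0$ but $P^{2|x|}\neq 0$, and the opposite failure occurs for its dual, the divided power algebra. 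The duality you actually need is $P^nH^*\BSO\cong\Hom(Q_nH_*\BSO,\Z_2)$, and the count goes through because $H_*\BSO$ is likewise a polynomial algebra on one generator in each degree $\ge 2$. With that substitution, the rest of your bookkeeping --- Newton's identity for the nontriviality of $s_n$ with $n\ge 3$ odd, and $s_{2m}=s_m^2$ over $\Z_2$ --- is correct and matches the paper.
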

\begin{proof}
	We already computed the following:
	\[v_n=\begin{cases}
		s_n\pw n\text{ is odd}\\
		s_m^{2^k}\pw n=2^km\text{, where }m>1\text{ is odd and }k\ge 1\\
		w_2^{2^{k-1}}\pw n=2^k.
	\end{cases} \]
	If $n$ is even and not a power of $2$, we can write $n=2^km$ for $k\ge 1$ and $m>1$ odd. We then have $v_n=s_m^{2^k}$. If $k>1$, then $n/2$ is also even and not a power of $2$, so $v_{n/2}=s_m^{2^{k-1}}$. In this case $v_n=v_{n/2}^2$. Now if $k=1$ (so $n=2m$), then $v_n=s_m^2$ and $v_{n/2}=v_m=s_m$. Again we have $v_n=v_{n/2}^2$. Next suppose $n=2^k$. For all $k\ge 1$, we saw $v_n=w_2^{2^{k-1}}$. When $k>1$, $n/2=2^{k-1}$ is also an even power of $2$, so $v_{n/2}=w_2^{2^{k-2}}$. Thus $v_n=v_{n/2}^2$ as claimed.
\end{proof}

\begin{theorem}\label{bspinprimitives}
	The primitive elements of $H^*\BSpin$ comprise one generator $z_n$ for each $n\ge 4$ not of the form $2^s+1$, where
	\[z_n=\begin{cases}
		s_n\pw n\neq 2^s+1\text{ is odd}\\
		w_4^{n/4}\pw \A(n)=1\text{ and $n$ is a power of $2$}\\
		s_{m,m}^{n/(2m)}\pw \A(n)=2\text{ and $n=2^km$ for $m=2^s+1$}\\
		z_{n/2}^2\pw \A(n)\ge 3\text{ and $n$ is even}.
	\end{cases} \]
	Here $\A(n)$ is the number of $2$-bits in the binary expansion of $n$. The first definition only applies for $n\neq 2^s+1$, but this covers all odd-degree primitive generators; the third through fourth definitions together cover the all even degrees starting with $4$.
\end{theorem}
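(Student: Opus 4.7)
The plan mirrors that of Theorem \ref{bsoprimitives}: use the Frobenius lemma (the square of a primitive remains primitive in characteristic two) to reduce to base cases, then verify each directly. The recursion $z_n = z_{n/2}^2$ immediately handles $\A(n)\ge 3$ with $n$ even, and the identities $w_4^{n/4} = (w_4^{n/8})^2$ and $s_{m,m}^{n/(2m)} = (s_{m,m}^{n/(4m)})^2$ give analogous Frobenius reductions in the $\A(n) = 1$ and $\A(n) = 2$ cases. Three base cases remain for direct verification: (i) odd $n$ with $\A(n)\ge 3$, where $z_n = s_n$; (ii) $n = 4$, where $z_4 = w_4$; and (iii) $n = 2m$ with $m = 2^s + 1$, where $z_n = s_{m,m}$.

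Cases (i) and (ii) are routine. For (i), $s_n$ is primitive in $H^*\BO$ by Newton's identity and so restricts to a primitive of $H^*\BSpin$; it is nonzero because $s_n \equiv w_n$ modulo decomposables (for $n$ odd), and $w_n$ represents the indecomposable generator in such degrees. For (ii), I would observe that $w_1 = w_2 = 0$ in $H^*\BSpin$ by the $\Spin$ condition and $w_3 = \Sq^1 w_2 = 0$ by the Wu formula, so all mixed terms of the Cartan coproduct $\D w_4 = \sum_{i+j=4} w_i \otimes w_j$ vanish, making $w_4$ primitive.

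Case (iii) is the main obstacle. Splitting formal roots in $H^*\BO$ gives $\D s_{m,m} = s_{m,m}\otimes 1 + s_m \otimes s_m + 1\otimes s_{m,m}$, so $s_{m,m}$ becomes primitive in $H^*\BSpin$ provided the middle term vanishes, i.e.\ provided $s_m = 0$ in $H^*\BSpin$ for $m = 2^s+1$. The key structural input is this: $s_m$ restricts to a primitive of odd degree $m$ in $H^*\BSpin$, and $m = 2^s+1$ is an excluded degree, so the restriction of $s_m$ must be decomposable; but in a connected bicommutative Hopf algebra over $\Z_2$, any decomposable primitive is a sum of Frobenius squares of primitives, all of which live in even degrees, so an odd-degree decomposable primitive must vanish. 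Nonvanishing of $s_{m,m}$ then follows from its projection to $QH^{2m}\BSpin$ being the indecomposable generator. Finally, uniqueness of each $z_n$ as a one-dimensional subspace of $PH^n\BSpin$ is automatic, since $\dim PH^n\BSpin = \dim QH_n\BSpin = 1$ in each allowed degree by duality with the polynomial structure of $H_*\BSpin$.
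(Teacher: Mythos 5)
Your proposal is correct, and its overall architecture (Frobenius reduction of even degrees to base cases, then direct verification of each base case) matches the paper's; but you prove the crucial vanishing $s_m=0$ in $H^*\BSpin$ for $m=2^s+1$ by a genuinely different route. The paper isolates this as Lemma \ref{z} and proves it by a Steenrod computation: expanding $\Sq$ on formal roots gives $\Sq^{2^k}\bigl(s_{2^k+1}\bigr)=s_{2^{k+1}+1}$, and since the kernel of $H^*\BSO\to H^*\BSpin$ is the $\m A$-submodule $\m A w_2$ (hence closed under the $\Sq^i$), induction from the base case kills every $s_{2^k+1}$. You instead argue purely Hopf-algebraically: $s_m$ is primitive of odd degree, it must be decomposable because $QH^m\BSpin=0$ in degrees $2^s+1$, and by Milnor--Moore a decomposable primitive in a connected bicommutative Hopf algebra over $\Z_2$ lies in the image of the Frobenius, hence in even degree, forcing $s_m=0$. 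Both arguments are sound and neither is circular: the paper's needs only the $\m A$-module description of the kernel and stays close to explicit symmetric-function identities (and incidentally produces the relation $\Sq^{2^k}s_{2^k+1}=s_{2^{k+1}+1}$, which is reusable), while yours trades that computation for two standard structural inputs -- the ring presentation $H^*\BSpin\cong\Z_2[w_i:i\neq 2^j+1]$ and Milnor--Moore -- both of which the paper invokes elsewhere anyway. Two minor remarks: you only need the weaker form of Milnor--Moore (decomposable primitives are squares of \emph{something}, hence concentrated in even degrees), not that they are squares of primitives, so the slight overstatement is harmless; and your explicit check that $w_4$ is primitive (via $w_1=w_2=0$ and $w_3=\Sq^1w_2=0$) fills in the $\A(n)=1$ base case, which the paper's proof actually passes over in silence.
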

\begin{proof}
	In $H^*\BO$, $s_n$ is $nw_n$ plus decomposable elements, so $s_n$ restricts to a nontrivial primitive in $H^*\BSpin$ provided that $n$ is odd, $n\ge 4$, and $n\neq 2^s+1$ (these are the degrees in which $w_n$ is nontrivial). When $n$ is even, start by writing $n=2^km$ for $k\ge 1$ and for odd $m>1$. If $m\neq 2^s+1$ (note that this precludes $m=3$), then $s_m$ is a nontrivial primitive in $H^*\BSpin$ and we can set $z_n=s_m^{2^k}$. Equivalently, $z_n=z_{n/2}^2$. The condition $m\neq 2^s+1$ means $\A(n)\neq 2$, and $m>1$ means $\A(n)\neq 1$. Finally we consider when $m=2^s+1$. Then $n=2^k(2^s+1)=2^{k+s}+2^k$, so $\A(n)=2$. The class $s_m$ is not primitive, and in fact is zero by lemma \ref{z}. It follows that $s_{m,m}$ is primitive since
	\[\D(s_{m,m})=s_{m,m}\T 1+s_m\T s_m+1\T s_{m,m}=s_{m,m}\T 1+1\T s_{m,m}. \]
	 We can now define $z_n=s_{m,m}^{2^{k-1}}$ for these values of $n$.
\end{proof}

%


\begin{lemma}\label{z}
	For all $k$, $s_{2^k+1}$ vanishes when restricted to $H^*\BSpin$.
\end{lemma}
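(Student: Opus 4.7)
The plan is to reduce the lemma to a short induction on $k$ driven by the Steenrod-square identity
\[\Sq^j s_n = \binom{n}{j}\, s_{n+j}\]
on the Newton classes $s_n = \sum_i x_i^n$. I would first establish this identity via the splitting principle: after pulling back to $\Z_2[x_1,\dots,x_m]$ with each $x_i$ in degree one, the formula $\Sq(x_i) = x_i + x_i^2$ together with the Cartan formula gives $\Sq^j(x_i^n) = \binom{n}{j} x_i^{n+j}$, and summing over $i$ yields the claim.

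Next I would specialize this identity to $n = 2^{k-1}+1$ and $j = 2^{k-1}$. For $k \ge 2$ the binary expansion of $2^{k-1}+1$ has its two $1$'s in the distinct positions $0$ and $k-1$, so Lemma \ref{binomialfact} evaluates $\binom{2^{k-1}+1}{2^{k-1}}$ to $1$ modulo $2$, giving
\[s_{2^k+1} = \Sq^{2^{k-1}} s_{2^{k-1}+1}\]
in $H^*\BO$; the same identity holds in $H^*\BSO$ and $H^*\BSpin$ because Steenrod operations commute with pullback. For the base cases, Newton's identity over $\Z_2$ yields $s_2 = w_1^2$, which already vanishes in $H^*\BSO$, and $s_3 = w_1^3 + w_1 w_2 + w_3 = w_3$ in $H^*\BSO$. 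The Wu formula gives $\Sq^1 w_2 = w_1 w_2 + w_3 = w_3$ in $H^*\BSO$, so $s_3 = \Sq^1 w_2$ there. Since $\BSpin \to \BSO$ is the homotopy fiber of the map classifying $w_2$, the class $w_2$ pulls back to zero in $H^*\BSpin$, and therefore $s_3 = \Sq^1 w_2 = 0$ in $H^*\BSpin$ as well.

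Induction on $k$ now completes the proof: the cases $k = 0$ and $k = 1$ vanish by the argument just described, and the inductive step is the one-line calculation $s_{2^k+1} = \Sq^{2^{k-1}} s_{2^{k-1}+1} = 0$ in $H^*\BSpin$. I do not expect any real obstacle; the only moving parts are a single Steenrod-square computation on power sums, an evaluation of one binomial coefficient via the paper's digit-factorization lemma, and the standard fact that $w_2$ pulls back to zero along $\BSpin \to \BSO$. If there is any finesse required, it is simply in recognizing that $(n,j) = (2^{k-1}+1, 2^{k-1})$ is the unique choice placing $n+j$ at $2^k+1$ while keeping $\binom{n}{j}$ odd for $k \ge 2$.
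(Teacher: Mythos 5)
Your proof is correct and follows essentially the same strategy as the paper's: an induction on $k$ powered by a Steenrod square carrying $s_{2^{k-1}+1}$ to $s_{2^k+1}$. The one substantive difference is your handling of the base cases, and there your version is actually the more careful one. The paper computes the total square directly, obtaining $\Sq(s_{2^k+1})=s_{2^k+1}+s_{2^k+2}+s_{2^{k+1}+1}+s_{2^{k+1}+2}$, and reads off $\Sq^{2^k}(s_{2^k+1})=s_{2^{k+1}+1}$; but for $k=0$ the two middle terms are both $s_3$ and cancel, so in fact $\Sq^1(s_2)=0$ and the paper's induction as written never produces $s_3$. Your formulation $\Sq^j s_n=\binom{n}{j}s_{n+j}$ detects exactly this degeneration (since $\binom{2}{1}\equiv 0 \bmod 2$), and your separate base case $s_3=w_3=\Sq^1 w_2$ in $H^*\BSO$, which dies in $H^*\BSpin$ because the kernel of $H^*\BSO\to H^*\BSpin$ is the $\m A$-ideal on $w_2$ (equivalently, because $w_2$ pulls back to zero and $\Sq^1$ is natural), supplies precisely the missing step. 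Everything else --- the splitting-principle derivation of the square formula, the evaluation of $\binom{2^{k-1}+1}{2^{k-1}}$ for $k\ge 2$ via the digit factorization of Lemma \ref{binomialfact}, and the naturality argument letting the identity descend from $H^*\BO$ --- is sound.
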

\begin{proof}
	Stong shows that the natural map $H^*\BSO\to\H^*\BSpin$ is epic with kernel $\m A w_2$. In particular $w_2$ vanishes in $H^*\BSpin$, where also $w_2=s_2$. By definition $s_{2^k+1}$ is the sum $\sum_ix_i^{2^k+1}$ expressed in terms of the elementary symmetric polynomials $w_j$ in the $x_i$. We can compute the total square easily:
	\begin{align*}
		\Sq\left(s_{2^k+1}\right)&=\Sq\left(\sum x_i^{2^k+1}\right)\\
		&=\sum\left(x_i+x_i^2\right)^{2^k+1}\\
		&=\sum\left(\left(x_i+x_i^2\right)\left(x_i+x_i^2\right)^{2^k}\right)\\
		&=\sum\left(\left(x_i+x_i^2\right)\left(x_i^{2^k}+x_i^{2^{k+1}}\right)\right)\\
		&=\sum(\left(x_i^{2^k+1}+x_i^{2^k+2}+x_i^{2^{k+1}+1}+x_i^{2^{k+1}+2}\right)\\
		&=s_{2^k+1}+s_{2^k+2}+s_{2^{k+1}+1}+s_{2^{k+1}+2}.
	\end{align*}
	In particular $\Sq^{2^k}\left(s_{2^k+1}\right)=s_{2^{k+1}+1}$, so by induction $s_{2^k+1}$ vanishes for all $k\ge 0$ in $H^*\BSpin$.
\end{proof}

\begin{remark}\label{b5}
	A potential point of confusion arises when we identify $H^*\BSpin$ with $\Z_2[w_n:n>2,n\neq 2^k+1]$. Namely, this is an isomorphism of rings, but not of $\m A$-modules, assuming the $\m A$-action on Stiefel-Whitney classes is inherited from the action on $H^*\BO$. It is better to therefore identify $H^*\BSpin$ with the quotient of $H^*\BSO=\Z_2[w_n:n\ge 2]$ by $\m Aw_2$. To highlight the potential issue, consider the class $s_{17}$. Lemma \ref{z} shows $s_{17}$ is trivial in $H^*\BSpin$, but direct computation (using Newton's identities, for example) shows that $z_{17}$ does not vanish when simply setting $w_n=0$ for all $n\ge 2$, $n=2^k+1$. In particular $s_{17}=w_7w_{10}+w_6w_{11}+w_4w_{13}$. This is a low-degree case which demonstrates $H^*\BSpin$ is not isomorphic to $\Z_2[w_n:n>2,n\neq 2^k+1]$ as an $\m A$-module.
\end{remark}

\begin{theorem}
	We have $PH^n\BSpinc\cong\Z_2$ whenever $n\ge 2$ and $n\neq 2^k+1$, and $PH^n\BSpinc=0$ for other $n$. Let $z_n$ be the primitive generator of $PH^n\BSpinc$ in degree $n$ for some $n\ge 2$ with $n\ge 2^k+1$. Then
		\[z_n=\begin{cases}
		s_n\pw \A(n)\ge 3,\\
		s_{n/2,n/2}\pw \A(n)=2,\\
		w_2^{n/2}\pw \A(n)=1.
	\end{cases} \]
\end{theorem}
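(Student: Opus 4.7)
The plan is to combine a duality-based dimension count with explicit constructions of primitives in each degree. For any graded connected Hopf algebra $A$ of finite type over $\Z_2$, the degree-$n$ primitives of the dual satisfy $P(A^*)_n \cong (QA_n)^*$, since a functional on $A$ is primitive exactly when it annihilates the decomposable elements. Taking $A = H_*\BSpinc$ and using $H_*\BSpinc = \Z_2[x_i^2,\, x_j : \A(i)<3,\, \A(j) \ge 3]$ as recalled earlier in the paper, the degrees of the polynomial generators of $H_*\BSpinc$ form two disjoint families $\{2i : \A(i) < 3\}$ (even degrees with $\A \le 2$) and $\{j : \A(j) \ge 3\}$, whose union is exactly $\{n \ge 2 : n \ne 2^k+1\}$. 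Consequently $\dim PH^n\BSpinc = 1$ in those degrees and $0$ otherwise, which yields the vanishing statement and reduces the remaining task to exhibiting one nonzero primitive $z_n$ of the stated form in each nontrivial degree.

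For $\A(n) = 1$, so $n = 2^k$ with $k \ge 1$, the Cartan formula gives $\D w_2 = 1 \T w_2 + w_1 \T w_1 + w_2 \T 1$ in $H^*\BO$, and since $w_1 = 0$ in $H^*\BSpinc$, $w_2$ is primitive; iterated squaring then produces the primitive $w_2^{n/2}$, which is nonzero because $w_2$ is one of the polynomial generators of $H^*\BSpinc$. For $\A(n) \ge 3$, the power sum $s_n$ is primitive in $H^*\BO$ and hence in the quotient Hopf algebra $H^*\BSpinc$; when $n$ is odd the expansion $s_n = w_n + (\text{decomposables})$ with $w_n$ a polynomial generator forces $s_n \ne 0$, and when $n = 2^k m$ with $m$ odd the mod-$2$ identity $s_n = s_m^{2^k}$, combined with the integral domain structure of $H^*\BSpinc$, yields the same conclusion.

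The remaining case $\A(n) = 2$ with $n$ even is the crux. Write $n = 2^k m$ with $m = 2^s+1$ odd, and take $z_n = s_{n/2,\,n/2}$, whose coproduct in $H^*\BO$ is
\[ \D s_{n/2,\,n/2} = s_{n/2,\,n/2} \T 1 + s_{n/2} \T s_{n/2} + 1 \T s_{n/2,\,n/2}. \]
The key step is to show that $s_m$ vanishes in $H^*\BSpinc$ for every $m = 2^s+1$: the base case $s_3 = w_3$ vanishes because $w_3$ is absent from the polynomial presentation of $H^*\BSpinc$, and the identity $\Sq^{2^s}(s_{2^s+1}) = s_{2^{s+1}+1}$ used in the proof of Lemma \ref{z} supplies the inductive step. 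Combined with $s_{n/2} = s_m^{2^{k-1}}$ mod $2$, this forces $s_{n/2} = 0$, so the cross term drops out of the coproduct and $s_{n/2,\,n/2}$ is primitive. Nontriviality follows by naturality along $H^*\BSpinc \to H^*\BSpin$: Stolz's Theorem \ref{bspinprimitives}, together with the char-$2$ identity $s_{n/2,\,n/2} = s_{m,m}^{2^{k-1}}$, identifies $s_{n/2,\,n/2}$ as a nonzero primitive generator in degree $n$ of $H^*\BSpin$.

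The main obstacle is precisely this last case: establishing the vanishing $s_{2^s+1} = 0$ in $H^*\BSpinc$ so that the cross term in $\D s_{n/2,\,n/2}$ disappears. Once that is secured by the $\Sq$-induction above, the coproduct identity delivers primitivity, naturality and Stolz's theorem give nontriviality, and the dimension count from the first paragraph completes the proof.
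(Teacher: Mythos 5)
Your proof is correct and takes essentially the same route as the paper's: exhibit the explicit primitives $s_n$, $s_{n/2,n/2}$, and $w_2^{n/2}$, with the $\A(n)=2$ case resting on the vanishing of $s_{2^s+1}$ in $H^*\BSpinc$ and the squaring identities $s_{2a}=s_a^2$, $s_{a,a}^2=s_{2a,2a}$. You in fact supply details the paper's terse proof omits — the dimension count dual to $QH_*\BSpinc$, the nontriviality checks, and crucially the correct base case $s_3=w_3=0$ for the $\Sq$-induction (necessary since $w_2\neq 0$ in $H^*\BSpinc$, unlike the $\Spin$ case) — so there are no gaps.
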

\begin{proof}
	The Girard formula shows that $s_n$ is $nw_n$ plus decomposable elements. Thus $z_n=s_n$ for all odd $n$. If $n$ is a power of $2$, then $z_n=w_2^{n/2}$. The remaining case is when $n=2^im$ for odd $m>1$ and $i>0$. When $m\neq 2^k+1$, $s_m$ is primitive, so $z_n=s_m^{2^i}=s_{2^im}=s_n$. If $m=2^k+1$, one can show that $s_{m,m}$ is primitive, and thus $z_n=s_{m,m}^{2^{i-1}}=s_{n/2,n/2}$. This proves the claim.
\end{proof}

Note that over $\Z$, $s_n^2=2s_{n,n}+s_{2n}$, so $s_{n,n}=\frac12(s_n^2-s_{2n})$. We can use this to compute $s_{n,n}$ modulo $2$ if we know $s_n$ over $\Z$.

\chapter{Computation of the $\Spinc$ transfer map}

All coefficients are taken in $\Z_2$ unless stated otherwise. In this proof we refer to integration over the fiber, which we define first here: let $X$ be a Poincar\'e duality space of formal dimension $n$ with a (known) orientation, and let $X\to E\xrightarrow{\pi} B$ be a fibration for which $\pi_1B$ acts on $X$ by orientation preserving homotopy equivalences. Then integration along the fiber $\pi_!\colon H^{k+n}E\to H^kB$ is defined as the composite
	\[H^{k+n}E\twoheadrightarrow E^{k,n}_\infty\hookrightarrow E_2^{k,n}=H^k(B;H^nX)\to H^kB. \]
The first two maps come from the fact that $E_2^{k,\ell}=0$ for $\ell>n$ and the last map comes from the orientation.

\begin{theorem}\label{thm}
	The group $G\coloneqq\SU(3)$ acts transitively on the space $\CP^2$ with fiber $H\coloneqq S(\U(1)\times\U(2))$. This gives a bundle
	\[\CP^2\to B(S(U(2)\times U(1))\xrightarrow{\pi}\BSU(3). \]
	with associated vertical bundle $\tau$. We exhibit classes cohomology $x_i,y_i$ and prove the following:
	\begin{enumerate}
		\item $H^*BH\cong\Z_2[x_2,x_4]$;
		\item $H^*BG=\Z_2[y_4,y_6]$;
		\item $B\pi^*y_4=x_2^2+x_4$ and $B\pi^*y_6=x_2x_4$;
		\item we have
			\begin{enumerate}
				\item $\Sq^1(y_4)=0$ and $\Sq^2(y_4)=y_6$;
				\item $\Sq^1(y_6)=0$ and $\Sq^2(y_6)=0$;
			\end{enumerate}
		\item $w(\tau)=1+x_2+x_4$
		\item $\pi_!\colon H^nBH\to H^{n-4}BG$ is given modulo $y_6$ by
			\[\pi_!(x_2^ax_4^b)=\begin{cases}
				y_4^{a/2-1}\pw a>0\text{ is even and }b=0,\\
				y_4^{b-1}\pw a=0\text{ and }b>0,\\
				0\pw\text{ otherwise.}
			\end{cases}\]
	\end{enumerate}
\end{theorem}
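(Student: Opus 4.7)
The plan is to handle the six parts of Theorem \ref{thm} in order, largely in parallel with the quaternionic calculation in Theorem \ref{sixstep}. For parts (1) and (2), I would observe that the projection onto the $\U(2)$-factor gives an isomorphism $H = \op{S}(\U(1)\times\U(2))\cong\U(2)$, so $\BH\simeq\BU(2)$ and $H^*\BH\cong\Z_2[c_1,c_2]=\Z_2[x_2,x_4]$ after setting $x_2=c_1$ and $x_4=c_2$ (reduced mod $2$); similarly $H^*\BSU(3)\cong\Z_2[c_2,c_3]=\Z_2[y_4,y_6]$. For (3), the inclusion $H\hookrightarrow G$ arises from the fact that the standard $3$-dimensional representation of $G$, restricted to $H$, splits as $V_2\oplus(\det V_2)^{-1}$, where $V_2$ is the standard $\U(2)$-representation. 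Expanding $c(V_2\oplus(\det V_2)^{-1})=(1+c_1+c_2)(1-c_1)$ and reducing mod $2$ recovers $\pi^*y_4=x_2^2+x_4$ and $\pi^*y_6=x_2x_4$.

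For (4), I would pull back to the maximal torus: writing $H^*\mathrm{B}T\cong\Z_2[a_1,a_2,a_3]/(a_1+a_2+a_3)$, each $a_i$ is the mod-$2$ reduction of an integral degree-$2$ class, so $\Sq^1 a_i=0$ and $\Sq^2 a_i=a_i^2$. The symmetric expressions $y_4=a_1^2+a_1a_2+a_2^2$ and $y_6=a_1a_2(a_1+a_2)$ (using $a_3=a_1+a_2$ in characteristic $2$) then give the stated actions via the Cartan formula. For (5), the vertical tangent bundle $\tau$ is associated to the isotropy representation of $H$ on $\f g/\f h \cong T_{eH}(G/H)$, which is canonically the standard complex $\U(2)$-module $V_2$. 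Therefore $\tau$ as a real rank-$4$ bundle satisfies $w(\tau)=c(V_2)\bmod 2=1+x_2+x_4$.

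The main obstacle is part (6), the explicit transfer formula. I would first invoke Leray-Hirsch: since $x_2$ restricts to the generator of $H^*\CP^2$, $H^*\BH$ is free over $H^*\BG$ on the basis $\{1,x_2,x_2^2\}$, and $\pi_!$ is the $H^*\BG$-linear projection onto the $x_2^2$-coefficient (corresponding to the top class of the fiber). From (3) we extract the relation $x_2\cdot y_4=x_2^3+x_2x_4=x_2^3+y_6$, hence $x_2^3=x_2y_4+y_6$. Iterating produces $x_2^{2k+1}\equiv y_4^k x_2\pmod{y_6}$ and $x_2^{2k}\equiv y_4^{k-1}x_2^2\pmod{y_6}$ for $k\ge 1$. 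Using $x_4=y_4+x_2^2$, one expands
\[x_2^a x_4^b=\sum_{k=0}^{b}\binom{b}{k}y_4^{b-k}x_2^{a+2k},\]
applies $\pi_!$ term-by-term, and reads off the cases: when $a$ is odd every $a+2k$ is odd so every contribution vanishes mod $y_6$; when $a>0$ is even and $b=0$ the formula is immediate; when $a=0$ and $b\ge 1$ the sum telescopes to $y_4^{b-1}\sum_{k=1}^{b}\binom{b}{k}=y_4^{b-1}(2^b-1)\equiv y_4^{b-1}\pmod 2$; and when both $a>0$ even and $b>0$, all contributions collapse to $y_4^{a/2+b-1}\sum_{k=0}^{b}\binom{b}{k}=y_4^{a/2+b-1}\cdot 2^b\equiv 0\pmod 2$, giving the final vanishing case.
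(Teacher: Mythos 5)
Your argument is correct and reaches all six conclusions, but it routes several parts differently from the paper. For (1)--(3) the paper compares Serre spectral sequences of the three $\U(1)$-bundles over $B(\U(1)\times\U(2))$, $\BU(3)$, and $\BU(1)$ (with the last map induced by the determinant) to identify $H^*\BH$, $H^*\BG$, and $\pi^*$; you instead use the group isomorphism $S(\U(1)\times\U(2))\cong\U(2)$ and the splitting of the restricted standard representation as $V_2\oplus(\det V_2)^{-1}$, which is shorter and makes (3) a one-line Whitney-formula computation. Part (4) is essentially the paper's Chern-root argument with the relation $a_1+a_2+a_3=0$ built in. For (6) the paper sets up the recurrences $\pi_!(x_2^n)=\pi_!(x_2^{n-2})y_4+\pi_!(x_2^{n-3})y_6$ and $\pi_!(x_4^n)=\pi_!(x_4^{n-1})y_4+\pi_!(x_4^{n-3})y_6^2$ and disposes of mixed monomials via $\pi_!(x_2^{n+k}x_4^n)=y_6^n\pi_!(x_2^k)$, whereas you expand $x_4=y_4+x_2^2$ binomially and sum; your version gives all four cases uniformly and the mod-$2$ vanishing $\sum_k\binom{b}{k}=2^b$ in the mixed case is a pleasant simplification. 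Both rest on the same two inputs: Leray--Hirsch over $H^*\BG$ with basis $\{1,x_2,x_2^2\}$ and the fact that $\pi_!$ is the $H^*\BG$-linear projection onto the $x_2^2$-coefficient (which the paper justifies by checking that $x_2^2$ restricts to the top class $u^2$ of the fiber and that $\pi_!(1)=\pi_!(x_2)=0$ for degree reasons; you should make that restriction explicit).

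One small inaccuracy in (5): the isotropy representation of $H\cong\U(2)$ on $\f g/\f h\cong T_{[0:0:1]}\CP^2=\Hom(\ell,\C^3/\ell)$ is not the standard module $V_2$ but its twist $V_2\otimes\det V_2$ (the paper's explicit computation of the adjoint action gives $v\mapsto Pv\bar z$ with $\bar z=\det P$). This does not affect your conclusion, since $c(V_2\otimes\det V_2)\equiv c(V_2)\pmod 2$ --- the Chern roots $2t_1+t_2$ and $t_1+2t_2$ reduce to $t_2$ and $t_1$ --- so $w(\tau)=1+x_2+x_4$ still holds, but the identification as stated is wrong and would matter integrally.
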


Proof of $(1)-(4)$: We have a commuting diagram as shown, where the horizontal rows are fiber bundles.
\begin{figure}[h!]\centering
	\begin{tikzcd}
		\U(1)\ar[r]\ar[d,equal] & BS(\U(1)\times\U(2))\ar[r,"f"]\ar[d,"\pi"] & B(\U(1)\times\U(2))\ar[d,"j"]\\
		\U(1)\ar[r]\ar[d,equal] & \BSU(3)\ar[r,"g"]\ar[d] & \BU(3)\ar[d]\\
		\U(1)\ar[r] & E\U(1)\ar[r] & \BU(1)
	\end{tikzcd}
\end{figure}

The map $\BU(3)\to\BU(1)$ is induced by the determinant. Comparing spectral sequences shows that the generator of $H^1\U(1)$ transgresses to $c_2$, where $H^*\BU(3)=\Z_2[c_2,c_4,c_6]$. Thus $H^*\BSU(3)=\Z_2[y_4,y_6]$ where $g^*c_4=y_4$ and $g^*c_6=y_6$. Next, write $H^*\BU(1)=\Z_2[a_2]$ and $H^*\BU(2)=\Z_2[b_2,b_4]$. Using this to identify $H^*B(\U(1)\times\U(2))=\Z_2[a_2,b_2,b_4]$, we can compute $j^*$ via the product formula for Chern classes as shown.
\begin{alignat*}{4}
	g^*c_2&=0& \hspace{1in} j^*c_2&=a_2+b_2&\\
	g^*c_4&=y_4& j^*c_4&=a_2b_2+b_4&\\
	g^*c_6&=y_6& j^*c_6&=a_2b_4.&
\end{alignat*}

By composing $f$ with the projection to $\BU(2)$, we obtain generators $x_2=f^*b_2$ and $x_4=f^*b_4$ in $H^*BS(\U(1)\times\U(2))=\Z_2[x_2,x_4]$. Comparing spectral sequences again, the generator of $H^1\U(1)$ in the top row transgresses to $j^*c_2=a_2+b_2$, and this is the only nontrivial differential. Thus we also have $f^*a_2=f^*(a_2+b_2+b_2)=f^*b_2=x_2$. Finally we compute
\[\pi^*y_4=\pi^*g^*c_4=f^*j^*c_4=f^*(a_2b_2+b_4)=x_2^2+x_4\]
and
\[\pi^*y_6=\pi^*g^*c_6=f^*j^*c_6=f^*(a_2b_4)=x_2x_4,\]

thus establishing parts $(1)-(3)$ of the theorem. For part $(4)$, note that $\Sq^1y_4=\Sq^1y_6=0$ for dimension reasons. Temporarily write $x,y,z$ for the (mod $2$) Chern roots of $\BU(3)$. Then $\Sq^2c_4=c_2c_4+c_6$ and $\Sq^2c_6=c_2c_6$ as shown.
\begin{alignat*}{4}
	\Sq^2c_4&=\Sq^2(xy+xz+yz)\hspace{2in}&\Sq^2c_6&=\Sq^2(xyz)&\\
	&=(x+y)xy+(x+z)xz+(y+z)yz& &=(x+y+z)(xyz)\\
	&=(x+y+z)(xy+xz+yz)+3xyz& &=c_2c_6\\
	&=c_2c_4+c_6&
\end{alignat*}
Thus $\Sq^2y_4=\Sq^2g^*(c_4)=g^*\Sq^2(c_4)=g^*(c_2c_4+c_6)=y_6$, and similarly $\Sq^2y_6=0$.
\\

Proof of $(5)$: Let $\rew\tau$ be the vertical bundle of $B(\U(2)\times\U(1))\to\BU(3)$. The total space of the universal $\U(3)$-bundle is $E\U(3)\times_{\U(3)}\f g$, where $\f g$ is the Lie algebra of $\U(3)$. Considering $B(\U(2)\times\U(1))$ as $E\U(3)/(\U(2)\times\U(1))$, we see that
\[\rew\tau=E\U(3)\times\f h^\perp \]
where $\f h^\perp\cong \f g/\f h$ is a chosen orthogonal complement and $\f h$ is the Lie algebra of $\U(2)\times\U(1)$.
\\

We now compute the action of $\U(2)\times\U(1)$ on $\f h^\perp$. An arbitrary element in $\U(2)$ and $\f h^\perp$ can be written respectively as
\[\begin{pmatrix}
	a & b & 0\\ -u\ol b & u\ol a & 0\\ 0 & 0 & z
\end{pmatrix}\and \begin{pmatrix}
	0 & 0 & v_1\\ 0 & 0 & v_2\\ -\ol v_1 & -\ol v_2 & 0
\end{pmatrix}, \]
where $a\ol a+b\ol b=1$ and $u\ol u=z\ol z=1$. We write the former as $(P,z)$ and latter as $v$. The adjoint action is then $(P,z)v(P,z)\inv$. As matrices this takes $v$ to
\[\begin{pmatrix}
	0 & 0 & \ol z(av_1+bv_2)\\
	0 & 0 & u\ol z(\ol av_2-\ol bv_1)\\
	-z(\ol a\ol v_1+\ol b\ol v_2) & \ol uz(b\ol v_1-a\ol v_2) & 0
\end{pmatrix}. \]
At the same time
\[\begin{pmatrix}
	a & b \\ -u \ol b & u\ol a
\end{pmatrix}\begin{pmatrix}
	v_1\\v_2
\end{pmatrix}=\begin{pmatrix}
	av_1+bv_2\\ u(\ol av_2-\ol bv_1)
\end{pmatrix}, \]
and thus the adjoint action of $(P,z)$ on $v$ is $v\mapsto (P,z)\cdot v=Pv\ol z$. Include $\Z_2^3\to \U(2)\times\U(1)$ with the first factor mapping to $z$ and the second two mapping to $\U(2)$ (to be consistent with earlier notation). We thus compute
\[\begin{pmatrix}
	(-1)^j & 0 & 0\\ 0 & (-1)^k & 0\\ 0 & 0 & (-1)^i
\end{pmatrix}\cdot v=\begin{pmatrix}
	(-1)^j & 0 \\ 0 & (-1)^k
\end{pmatrix}\begin{pmatrix}
	v_1 \\ v_2
\end{pmatrix}(-1)^i=\begin{pmatrix}
	(-1)^{i+j}v_1\\ (-1)^{i+k}v_2
\end{pmatrix}. \]
Writing $H^*B\Z_2^3=\Z_2[x,y,z]$, we now compute
\[w(\rew\tau)=(1+x+y)(1+x+z)=1+y+z+xy+xz+yz+x^2.\]
Previously we wrote $H^*B(\U(1)\times\U(2))=\Z_2[a_2,b_2,b_4]$ and identified $a_2=x$, $b_2=y+z$, and $b_4=yz$. This shows $w(\rew t)=1+b_2+a_2b_2+b_4+a_2^2$. Finally, we pull back to $BS(\U(1)\times\U(2))$ to get $w(\tau)=1+x_2+x_2^2+x_4+x_2^2=1+x_2+x_4$.
\\

Proof of $(6)$: Previously we saw $H^*BH=\Z_2[x_2,x_4]$ and $H^*BG=\Z_2[y_4,y_6]$ where $\pi^*y_4=x_2^2+x_4$ and $\pi^*y_6=x_2x_4$. We now set $H^*\CP^2=\Z_2[u]/(u^3)$. For degree reasons, the spectral sequence associated to $\CP^2\to BH\to BG$ is trivial, so we necessarily have $i^*x_2=u$. Then $i^*x_2^2=u^2$, and since $\pi\circ i$ is trivial,
\[i^*x_4=i^*(x_4+x_2^2+x_2^2)=i^*(\pi^*(y_4)+x_2^2)=u^2.\]

Now as a module over $H^*BG$, $H^*BH$ is free with basis $\{1,u,u^2\}$. Thus for any $x\in H^*BH$ we can uniquely identify $x$ with $r_0(x)+r_1(x)u+r_2(x)u^2$, where $r_i(x)\in H^*BG$. Integration along the fiber of $\tau$ is an $H^*BG$-module morphism $H^nBH\to H^{n-4}BG$, so it remains to determine this map on the basis elements $1,u,u^2$. For degree reasons $1$ and $u$ map to $0$, and $u^2$ maps to either $1$ or $0$. Since $u^2$ is already an element on $H^*\CP^2$, it is in the image of $H^4BH\to E_\infty^{0,4}\to E^{0,4}_2$, so $u^2$ maps to $1$.
\\

To compute the transfer map we now must write all monomials in $x_2,x_4$ in terms of $x_2^2+x_4=\pi^*(y_4)$ and $x_2x_4=\pi^*(y_6)$ in the basis $1,x_2,x_2^2$. Note that $\pi_!$ is a $H^*BG$-module map, so for $y\in H^*BG$ and $x\in H^*BH$, we have $\pi_!(\pi^*(y)x)=y\pi_!(x)$. Since
\[\pi_!(x_2^{n+k}x_4^n)=\pi_!((x_2x_4)^nx_2^k)=\pi_!(\pi^*(y_6^n)x_2^k)=y_6^n\pi_!(x_2^k)\and \pi_!(x_2^nx_4^{n+k})=y_6^n\pi_!(x_4^k),\]
we only need to compute $\pi_!(x_2^n)$ and $\pi_!(x_4^n)$. We also observe for $n\ge 3$
\[\pi_!(x_2^n)=\pi_!(x_2^{n-2}(x_2^2+x_4)+x_2^{n-2}x_4)=\pi_!(x_2^{n-2})y_4+\pi_!(x_2^{n-3})y_6\]
and
\[\pi_!(x_4^n)=\pi_!(x_4^{n-1}(x_4+x_2^2)+x_4^{n-1}x_2^2)=\pi_!(x_4^{n-1})y_4+\pi_!(x_4^{n-3})y_6^2. \]
It now remains to compute the six base cases $\pi_!(x_2^i)$ and $\pi_!(x_4^i)$ for $0\le i\le 2$. For degree reasons $\pi_!(1)=\pi_!(x_2)=0$, and our chosen Leray-Hirsch isomorphism gives $\pi_!(x_2^2)=1$. Then $\pi_!(x_4)=\pi_!(x_2^2+x_4)+\pi_!(x_2^2)=y_4\pi_!(1)+1=1$, and similarly
\[\pi_!(x_4^2)=\pi_!((x_2^2+x_4)^2)+\pi_!(x_2^4)=\pi_!(x_2^2(x_2^2+x_4))+\pi_!(x_2^2x_4)=\pi_!(x_2^2)y_4+\pi_!(x_2)y_6=y_4. \]
From the recurrence relation, it follows that, modulo $y_6$, we have $\pi_!(x_2^{2n})=y_4^{n-1}$, $\pi_!(x_2^{2n+1})=0$, and $\pi_!(x_4^n)=y_4^{n-1}$ for $n>0$. This implies statement (6) and completes the proof of theorem \ref{thm}.

\end{appendicesmulti} 


\begin{SingleSpace} 
\printbibliography[title={REFERENCES CITED}, category=cited]
%
\end{SingleSpace}

\end{document}